\theoremstyle{plain}
    \newtheorem{thm}{Theorem}[section]
    \newtheorem{claim}[thm]{Claim}
     \newtheorem{conjecture}[thm]{Conjecture}
    \newtheorem{example}[thm]{Example}
    \newtheorem{lemma}[thm]{Lemma}
    \newtheorem{proposition}[thm]{Proposition}
    \newtheorem{theorem}[thm]{Theorem}
    \newtheorem{mainTh}[thm]{Main Theorem}
\theoremstyle{definition}
    \newtheorem{remark}[thm]{Remark}
\theoremstyle{remark}
    \newtheorem{setup}[thm]{}
\newcommand{\BCC}{\mathbb{C}}
\newcommand{\C}{\mathbb{C}}
\newcommand{\PP}{\mathbb{P}}
\newcommand{\Q}{\mathbb{Q}}
\newcommand{\R}{\mathbb{R}}
\newcommand{\Z}{\mathbb{Z}}
\newcommand{\OO}{\mathcal{O}}
\newcommand{\Aut}{\operatorname{Aut}}
\newcommand{\Contr}{\operatorname{Contr}}
\newcommand{\Exc}{\operatorname{Exc}}
\newcommand{\Gal}{\operatorname{Gal}}
\newcommand{\GL}{\operatorname{GL}}
\newcommand{\id}{\operatorname{id}}
\newcommand{\NE}{\overline{\operatorname{NE}}}
\newcommand{\NS}{\operatorname{NS}}
\newcommand{\Null}{\operatorname{Null}}
\newcommand{\PE}{\operatorname{PE}}
\newcommand{\Proj}{\operatorname{Proj}}
\newcommand{\rank}{\operatorname{rank}}
\newcommand{\Sing}{\operatorname{Sing}}
\newcommand{\SL}{\operatorname{SL}}
\newcommand{\Supp}{\operatorname{Supp}}
\newcommand{\torsion}{\operatorname{torsion}}
\begin{document}

\title[$n$-dimensional projective varieties]
{$n$-dimensional projective varieties with the action of an abelian group of rank $n-1$}

\author{De-Qi Zhang}
\address
{
\textsc{Department of Mathematics} \endgraf
\textsc{National University of Singapore, 10 Lower Kent Ridge Road,
Singapore 119076
}}
\email{matzdq@nus.edu.sg}

\begin{abstract}
Let $X$ be a normal projective variety of dimension $n \ge 3$ admitting the action
of the group $G := \Z^{\oplus n-1}$ such that every non-trivial element of $G$ is of positive entropy.
We show:
`$X$ is not rationally connected' $\Rightarrow$ `$X$ is $G$-equivariant birational to the quotient of
a complex torus' $\Leftarrow \Rightarrow$ `$K_X + D$ is pseudo-effective for some
$G$-periodic effective fractional divisor $D$.' See Main Theorem \ref{CorA}.
To apply, one uses the above and fact:
`the Kodaira dimension $\kappa(X) \ge 0$' $\Rightarrow$ `$X$ is not uniruled'
$\Rightarrow$ `$X$ is not rationally connected.'
We may generalize the result to the case of solvable $G$ as in Remark \ref{rThB}.
\end{abstract}

\subjclass[2010]{
32H50, 
14J50, 
32M05, 
11G10, 
37B40 
}

\keywords{automorphism, iteration, complex dynamics, tori, topological entropy}

\thanks{The author is supported by an ARF of NUS}

\maketitle

\section{Introduction}

We work over the field $\BCC$ of complex numbers.

For a normal projective variety $X$, we denote by $\NS(X)$ the Neron-Severi group, i.e., the
free abelian group of Cartier divisors modulo algebraic equivalence.
Let
$$\NS_{\R}(X) := \NS(X) \otimes_{\Z} \R .$$
It is a vector space over $\R$ of finite dimension (called the Picard number of $X$).
The {\it pseudo-effective divisor cone} $\PE(X)$ is the closure
in $\NS_{\R}(X)$ of effective divisor classes on $X$.
A divisor is {\it pseudo-effective} if its class belongs to $\PE(X)$.

A projective variety $X$ is {\it rationally connected}
if some (and hence every) resolution $X'$ of $X$ is rationally connected, i.e.,
any two points of $X'$ is connected by an irreducible rational curve on $X'$.
Rational varieties (and more generally unirational varieties) are rationally connected.
A variety of Kodaira dimension $\ge 0$ is not uniruled, so it is
not rationally connected (which is Condition (i) in Theorem \ref{ThA}).

An automorphism $g$ of a projective variety $X$ or its representation
${g^*}_{\, | \NS_{\R}(X)}$ is of {\it positive entropy}
if $g^*L = \lambda L$ for some nonzero nef $\R$-Cartier divisor $L$ and some $\lambda > 1$.
This is equivalent to saying that the action of $g$ on the total cohomology group $H^*(X', \R)$
of some (or equivalently every) $g$-equivariant resolution
$X'$ of $X$ has spectral radius $\rho(g) > 1$.
Indeed, $g^* |_{\NS_{\R}(X')}$ and $g^* |_{\NS_{\R}(X)}$ have the
same spectral radius since $g$ permutes the exceptional divisors of $X' \to X$.
Further, for smooth $X$, we may define the (topological) {\it entropy} as follows 
(see \cite[Proposition 5.8]{Di05} or \cite[Proposition 3.5]{Di12}, and also \cite[\S 2]{DS04}):
$$h(g) := \log (\rho(g)) .$$

Let $X$ be a normal projective variety of dimension $n$. 
In \cite{DS04}, Dinh and Sibony have proved that every commutative subgroup $G$ of $\Aut(X)$ of positive entropy
has rank $\le n-1$
(see also \cite{Z-Tits} for the extension to the solvable groups).
In Remarque 4.10 of their arXiv version, they also
mentioned the interest to study these $X$ equipped with some $G$ attaining the maximal rank $n-1$.

In this paper, we consider the maximal rank case. Precisely, we consider the hypothesis:

\begin{itemize}
\item[{\bf Hyp(sA)}]
{\it
$\Aut(X) \supseteq G := \Z^{\oplus n-1}$; every non-trivial element of $G$ is of positive entropy.
}
\end{itemize}

Theorem \ref{CorA} is our main result. For smooth varieties,
we have Theorem \ref{ThA}; see also key Theorem \ref{ThB} for the general case.
The most important assumption in Condition (ii) below is the pseudo-effectivity of the adjoint divisor $K_X + \delta \Delta$.
A Zariski closed subset $Z \subset X$ is $G$-{\it periodic} if $Z$ is stabilized (as a set)
by a finite-index subgroup of $G$.

Condition (ii) in Theorem \ref{ThA} below is natural and in fact necessary in order for
$X$ to be $G$-equivariant birational to a torus quotient.
See Main Theorem \ref{CorA}.
Precisely, Condition (i), together with Hyp(sA), imply that $X$ is non-uniruled and hence
$K_X$ is pseudo-effective, so Condition (ii) holds. See the proof of key Theorem \ref{ThB}.

\begin{theorem}\label{ThA}
Assume that $X$ and $G := \Z^{\oplus n-1}$ satisfy Hyp(sA) and $n = \dim X \ge 3$.
Furthermore, assume either one of the following three conditions.
\begin{itemize}
\item[(i)]
$X$ is {\rm not} rationally connected.
\item[(ii)]
$X$ is smooth. There exists a reduced simple normal crossing divisor $\Delta$
which is $G$-periodic such that
$K_X + \delta \Delta$ is a pseudo-effective divisor for some $\delta \in [0, 1)$;
here $K_X$ is the canonical divisor of $X$.
\item[(iii)]
$X$ is smooth. Every $G$-periodic proper subvariety of $X$ is a point.
\end{itemize}
Then, replacing $G$ by a finite-index subgroup, we have the following:
\begin{itemize}
\item[(1)]
There is a birational map $X \dashrightarrow Y$ such that
the induced action of $G$ on $Y$ is biregular and
$Y = T/F$, where $T$ is an abelian variety and
$F$ is a finite group whose action on $T$ is free outside
a finite subset of $T$.
\item[(2)]
The canonical divisor of $Y$ is torsion: $K_Y \sim_{\Q} 0$. To be precise,
$mK_Y \sim 0$ (linear equivalence) where $m = |F|$.
\item[(3)]
There is a faithful action of $G$ on $T$ such that
the quotient map $T \to T/F = Y$ is $G$-equivariant.
Every $G$-periodic proper subvariety of
$Y$ or $T$ is a point.
\end{itemize}
\end{theorem}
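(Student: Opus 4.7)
The plan is to reduce Theorem \ref{ThA} to the key Theorem \ref{ThB}, whose hypothesis is the adjoint pseudo-effectivity appearing in (ii), and then extract conclusions (2) and (3) from the structure of the torus quotient $Y = T/F$.

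First I would verify that (i) and (iii) both imply (ii). For (i), as the paragraph preceding the theorem already sketches, Hyp(sA) together with non-rational-connectedness forces $X$ to be non-uniruled: the maximal rationally connected fibration $X \dashrightarrow B$ is canonical, hence $G$-equivariant after replacing $G$ by a finite-index subgroup, so $G$ acts on $B$; if $X$ were uniruled but not rationally connected, then $0 < \dim B < n$, and the Dinh--Sibony bound $\rank G \le \dim B - 1$ applied to $B$ (combined with a rank estimate on the kernel acting on general fibres) would contradict $\rank G = n-1$. Hence $X$ is non-uniruled, and the Boucksom--Demailly--P\u{a}un--Peternell theorem gives $K_X$ pseudo-effective, so (ii) holds with $\Delta = 0$, $\delta = 0$. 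For (iii), if $X$ were uniruled the same MRC analysis would produce a $G$-periodic proper subvariety of positive dimension (arising from fibres over $G$-periodic points of the base, or from the base itself), contradicting (iii); thus (ii) again follows.

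With (ii) in force I apply Theorem \ref{ThB} to produce the $G$-equivariant birational map $X \dashrightarrow Y$ and the finite cover $\pi : T \to Y$ of (1), with $T$ an abelian variety and $F = \Gal(T/Y)$, where Theorem \ref{ThB} will also supply $\pi$ quasi-\'etale. Part (2) is then formal: $\pi^* K_Y \sim K_T \sim 0$ as Weil divisor classes, and the norm homomorphism on line bundles yields $m K_Y \sim 0$ with $m = |F| = \deg \pi$. For (3), the cover $\pi$ is intrinsic to $Y$ -- for instance it can be recovered via the Albanese of a smooth model of $Y$, or equivalently as the global index-one cover attached to the torsion line bundle $K_Y$. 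Hence the $G$-action on $Y$ lifts, after a finite-index replacement of $G$ annihilating the resulting finite cocycle obstruction, to a $G$-action on $T$ commuting with $F$ and making $\pi$ equivariant; faithfulness is automatic from the positive-entropy hypothesis. After one further finite-index replacement I translate so that $G$ fixes the origin of $T$ and acts $\Z$-linearly. A positive-dimensional $G$-periodic proper subvariety of $T$ would yield a $G$-invariant proper subtorus $S \subsetneq T$, and the Dinh--Sibony bound applied to the commuting induced actions of $G$ on $S$ and on $T/S$ would contradict $\rank G = n - 1$; the corresponding statement on $Y$ then follows by descent through the finite morphism $\pi$. Finally, any positive-dimensional component of the fixed locus of a non-trivial $f \in F$ would be $G$-periodic (because $G$ and $F$ commute inside $\Aut(T)$), contradicting what we just proved, so $F$ acts freely off a finite set.

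The main obstacle is the rigidity statement that $T$ carries no positive-dimensional $G$-periodic proper subvariety: this is where the maximal-rank hypothesis is used sharply, and it requires combining a Perron--Frobenius-type eigenvalue analysis of the $G$-action on the complexified tangent space of $T$ with the structure theory of commuting semisimple endomorphisms of abelian varieties, in order to rule out any $G$-invariant complex subspace corresponding to a subtorus. A secondary technical obstacle is producing an honest $G$-action on $T$ (rather than merely a projective one) in (3), which is what forces the repeated passage to finite-index subgroups of $G$ throughout the argument.
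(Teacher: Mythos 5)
Your overall skeleton --- reduce every case to Theorem \ref{ThB} and then lift the $G$-action to $T$ using that $G$ is free abelian --- is exactly the paper's route (the paper's entire proof of Theorem \ref{ThA} is the short reduction in \S\ref{main} plus the citation of \cite[Lemma 2.4, \S 2.15]{max} for the lift, with conclusions (2) and (3) already packaged inside Theorem \ref{ThB}; your re-derivation of those conclusions via the norm map and a Dinh--Sibony rank count on a $G$-invariant subtorus is redundant but not wrong in spirit). However, there is a genuine gap in your treatment of Condition (iii). You claim that (iii) implies (ii) because ``if $X$ were uniruled the same MRC analysis would produce a $G$-periodic proper subvariety of positive dimension.'' The MRC argument only produces a contradiction when the base $B$ of the MRC fibration satisfies $0 < \dim B < n$, i.e.\ when $X$ is uniruled but \emph{not} rationally connected. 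If $X$ is rationally connected, $B$ is a point, there is no fibration to exploit, and no $G$-periodic positive-dimensional subvariety is produced; so your argument does not exclude the possibility that a rationally connected $X$ satisfies (iii), and in that case you have not shown $K_X$ is pseudo-effective. (That rational connectedness is in fact incompatible with (iii) is a \emph{consequence} of the theorem, not something you may assume.)

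The paper avoids this by treating (iii) directly rather than routing it through (ii): under (iii) the nef and big class $A = L_1 + \cdots + L_n$ of \ref{bigA} is ample (Lemma \ref{null}), the perpendicularity relations $A^{n-1}\cdot K_X = A^{n-2}\cdot K_X^2 = 0$ of Lemma \ref{per} force $K_X \equiv 0$ by the Hodge index statement (Lemma \ref{HR}), and then abundance, the index-one cover, the vanishing of $A^{n-2}\cdot c_2$, and \cite[Theorem 1.16]{GKP} give the torus quotient with $Y = X$ (Lemma \ref{ample}). If you want to keep your ``(iii) $\Rightarrow$ (ii)'' reduction, the fix is the first half of that argument: $K_X \equiv 0$ already gives pseudo-effectivity of $K_X$, so (ii) holds with $\Delta = 0$. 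Separately, your concluding paragraph overstates the remaining work: no Perron--Frobenius analysis of the tangent-space action is needed, since the absence of positive-dimensional $G$-periodic subvarieties of $T$ descends from the corresponding statement on $Y$ (itself built into the output of Theorem \ref{ThB}) through the finite $G$-equivariant map $T \to Y$.
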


\begin{remark}\label{rThA}
(1) Under Condition (iii) of Theorem \ref{ThA} (or key Theorem \ref{ThB}),
we can take $Y = X$ (cf.~Lemma \ref{ample}). See also Remark \ref{rHyp} (1).

(2) Hyp(sA) is birational in nature (cf.~Lemma \ref{birAct}).
Conditions (i) and (ii) are also birational in nature. (i) is clear by the definition.
For (ii), see the proof in Remark \ref{rHyp}.

(3) The condition $n = \dim X \ge 3$ is needed to kill the second Chern
class $c_2$ of $Y$ by using the perpendicularity of $c_2$ with the product
of common nef eigenvectors of $G$.
\end{remark}

In summary, Theorem \ref{ThA} (or key Theorem \ref{ThB}) and Proposition \ref{PropC} say that
if $G := \Z^{\oplus n-1}$ ($n \ge 3$) acts on a projective $n$-fold $X$ and
every non-trivial element of $G$ is of positive entropy then:
$$X \, \text{\it is not rationally connected} \, \Rightarrow \,
X \, \overset{\rm birational}\sim \, \text{\it torus quotient;}$$
$$K_X + D \ \text{\it is pseudo-effective for a G-periodic fractional} \, D
\Leftarrow \Rightarrow X \overset{\rm birational}\sim \text{\it torus quot.} $$
For applications, one may combine the above with the following well known fact,
where $\kappa(X)$ is the Kodaira dimension of $X$:
$$
\kappa(X) \ge 0 \, \Rightarrow \, X \, \text{\it is not uniruled} \, \Rightarrow \, X \,
\text{\it is not rationally connected} .
$$

Setting $\delta = 1$, the limit case in Condition (ii) of Theorem \ref{ThA},
we have the following Proposition \ref{PropD} which is a special case of Proposition \ref{PropE}.

\begin{proposition}\label{PropD}
Assume that $X$ and $G := \Z^{\oplus n-1}$ satisfy Hyp(sA) and $n = \dim X \ge 2$.
Assume further that $X$ is smooth, $\Delta$ is a simple normal crossing reduced divisor
which is $G$-periodic, and
$K_X + \Delta$ is a pseudo-effective divisor.
Then there is a birational map $X \dashrightarrow Y$
such that the following are true.
\begin{itemize}
\item[(1)]
$Y$ is a normal projective variety. The map $X \dashrightarrow Y$
is surjective in codimension-1.
The induced action of $G$ on $Y$ is biregular.
\item[(2)]
We have $K_Y + \Delta_Y \sim_{\Q} 0$, where
$\Delta_Y$ is the direct image of $\Delta$ and a reduced divisor.
\item[(3)]
Every $G$-periodic positive-dimensional proper
subvariety of $Y$ is contained in $\Delta_Y$.
\end{itemize}
\end{proposition}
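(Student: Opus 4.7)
The plan is to run a $G$-equivariant log minimal model program on the pair $(X,\Delta)$ and then use the maximal-rank positive-entropy action to force the output log canonical class to be $\Q$-torsion, pushing the $\delta<1$ argument of Theorem~\ref{ThA}(ii) to the boundary case $\delta=1$.

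After replacing $G$ by a finite-index subgroup so that $\Delta$ and each of its prime components are $G$-stable, the numerical class of $K_X+\Delta$ in $\NS_\R(X)$ is $G^*$-invariant. Since $(X,\Delta)$ is snc and $K_X+\Delta$ is pseudo-effective, I run a $(K_X+\Delta)$-MMP (BCHM and its extensions). The finitely many $(K_X+\Delta)$-negative extremal rays contracted at each step are permuted by $G$, so after a further finite-index reduction they are individually $G$-stable and every divisorial contraction and flip is $G$-equivariant. The resulting $(Y,\Delta_Y)$ is a normal projective pair with biregular $G$-action, the birational map $X\dashrightarrow Y$ is an isomorphism in codimension one, and $K_Y+\Delta_Y$ is nef; this is claim~(1).

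For~(2) I exploit the Dinh--Sibony common nef eigenvectors: under Hyp(sA) there exist $L_1,\dots,L_{n-1}\in\Nef(Y)$ and multiplicatively independent characters $\chi_i\colon G\to\R_{>0}$ with $g^*L_i=\chi_i(g)L_i$. Since the class of $K_Y+\Delta_Y$ is $G$-invariant while its intersection with $L_1\cdots L_{n-1}$ is multiplied by $\prod_i\chi_i(g)$, choosing $g$ with $\prod_i\chi_i(g)\neq 1$ forces $(K_Y+\Delta_Y)\cdot L_1\cdots L_{n-1}=0$; the same vanishing holds against every product of $n-1$ of the $L_i$. Together with nefness of $K_Y+\Delta_Y$ and a Hodge-index argument using that a generic combination $\sum a_iL_i$ is big (so that the cone spanned by these $(n-1)$-fold products detects numerical triviality on a nef divisor), this forces $K_Y+\Delta_Y\equiv 0$. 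Log abundance for dlt pairs with numerically trivial log canonical divisor (Kawamata, Nakayama, Gongyo) then upgrades this to $K_Y+\Delta_Y\sim_\Q 0$.

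For~(3), let $V\subsetneq Y$ be a $G$-periodic positive-dimensional subvariety; after a further finite-index reduction $V$ is $G$-stable. If $V\not\subset\Delta_Y$, then pulling back the common nef eigenvectors to a $G$-equivariant resolution of $V$ yields a rank $n-1$ abelian action of positive entropy on a variety of dimension $\dim V<n$, contradicting the Dinh--Sibony bound $\operatorname{rank}G\le\dim V-1$; hence $V\subset\Delta_Y$. The main obstacle will be the numerical-triviality step: pushing the Dinh--Sibony vanishings of $(K_Y+\Delta_Y)$ against products of $n-1$ nef eigenvectors to a genuine numerical triviality statement on the possibly singular $Y$, and ensuring that the log abundance input applies to the dlt (not only klt) output of the MMP; this is the analogue, at $\delta=1$, of the argument that in the $\delta<1$ case of Theorem~\ref{ThA}(ii) is absorbed into the torus-quotient structure of Main Theorem~\ref{CorA}.
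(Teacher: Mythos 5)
Your proposal follows the right general template (a $G$-equivariant LMMP, vanishing of the log canonical class against products of the Dinh--Sibony nef eigenvectors, then abundance in numerical dimension zero), but it has two genuine gaps. The first is in step (1): you run a plain $(K_X+\Delta)$-MMP and assert both that it terminates in a model with $K_Y+\Delta_Y$ nef and that at each step there are only finitely many $(K_X+\Delta)$-negative extremal rays, permuted by $G$. Neither is available here. Termination of flips, and more generally the existence of a minimal model for a pseudo-effective lc pair whose boundary is \emph{not} big, is open in dimension $\ge 4$; and the cone theorem gives finiteness of negative extremal rays only when the boundary contains an ample divisor, which the reduced snc divisor $\Delta$ does not. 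This finiteness is exactly what the paper flags as the main difficulty for $G$-equivariance (since $G$ is infinite it could permute infinitely many rays). The paper's fix is structural: it perturbs to the \emph{klt} pair $(X,(1-\varepsilon)\Delta+A)$ with $A=\sum L_i$ the nef and big sum of eigenvectors from Lemma \ref{nefbig}, so that the boundary is big, only finitely many negative extremal rays exist (all $A$-trivial, hence $G$-permuted), and termination follows from \cite[Corollary 1.4.2]{BCHM} (Proposition \ref{PropA}). It then never needs $K_Y+\Delta_Y$ to be nef: instead it takes Nakayama's $\sigma$-decomposition $K_X+\Delta=P+N$ on $X$ itself, kills $P$ by the hard-Lefschetz lemma \cite[Lemma 2.2]{nz2} applied with $P\cdot A^{n-1}=0$, and then kills the direct image of $N$ on the MMP output using the nefness of $K_{X(s)}+(1-\varepsilon)\Delta(s)+A(s)\equiv N(s)-\varepsilon\Delta(s)+A(s)$ (Claims \ref{N1} and \ref{N2}). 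Your Hodge-index step also silently needs more than $(K_Y+\Delta_Y)\cdot L_{i_1}\cdots L_{i_{n-1}}=0$: Lemma \ref{HR} requires ample classes and the additional vanishing of the square term, which is why the paper routes this through \cite{nz2} rather than through Hodge index directly.

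The second gap is your proof of (3), which does not work as stated. Restricting the nef eigenvectors to a $G$-periodic subvariety $V$ does not produce a rank $n-1$ positive-entropy action on $V$; on the contrary, by Lemma \ref{per} and Proposition \ref{PropA}(7) the eigenvectors restrict to numerically trivial classes on any $G$-periodic positive-dimensional $V$, so no Dinh--Sibony rank contradiction arises. Worse, your argument never uses the hypothesis $V\not\subset\Delta_Y$, so if it were valid it would show that \emph{no} $G$-periodic positive-dimensional subvariety exists at all --- contradicting the fact that the components of $\Delta_Y$ are themselves $G$-periodic. The paper's argument for (3) is different and does use the MMP output: $K_Y+(1-\varepsilon)\Delta_Y+A_Y$ is ample and $\sim_{\Q}-\varepsilon\Delta_Y+A_Y$ by (2); restricting to a $G$-periodic $Z$ and using $A_Y{}_{|Z}\equiv 0$ shows $-\varepsilon\Delta_Y{}_{|Z}$ is ample, which is impossible if $Z\not\subset\Supp\Delta_Y$ since then $\Delta_Y{}_{|Z}$ would be pseudo-effective.
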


Without the pseudo-effectivity of $K_X + \Delta$ in Proposition \ref{PropD},
the following conjecture is well known.
The implication `$(2) \Rightarrow (1)$' is known in any dimension,
while the implication `$(1) \Rightarrow (2)$' is known only in dimension $\le 2$.
See \cite[Lemma 5.11, Theorem 1.1]{KeMc}.

\begin{conjecture}\label{conj}
Let $X$ be a smooth projective variety and let $\Delta$
be a simple normal crossing reduced divisor.
Then the following are equivalent.
\begin{itemize}
\item[(1)]
$K_X + \Delta$ is not a pseudo-effective divisor.
\item[(2)]
$X \setminus \Delta$ is covered by images of the affine line $\C$.
\end{itemize}
\end{conjecture}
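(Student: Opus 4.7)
For the easy direction $(2) \Rightarrow (1)$, my plan is a movable-curve argument via Boucksom--Demailly--Paun--Peternell duality. Assuming $X \setminus \Delta$ is covered by nonconstant morphisms $f : \C = \mathbb{A}^1 \to X \setminus \Delta$, each such $f$ extends to $\bar f : \PP^1 \to X$ with $\bar f^{-1}(\Delta) \subset \{\infty\}$. A family of these whose images sweep out $X \setminus \Delta$ produces, after resolution and normalization, a covering family of rational curves whose numerical class $\alpha$ is movable. By adjunction on $\PP^1$ one has $K_X \cdot \alpha \le -2$, while $\Delta \cdot \alpha$ is bounded by the ramification of $\bar f$ at $\infty$ on a generic member and can be controlled by passing to a sufficiently general subfamily. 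This yields $(K_X + \Delta) \cdot \alpha < 0$, so by BDPP the class $K_X + \Delta$ fails to be pseudo-effective.

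For the hard direction $(1) \Rightarrow (2)$, I would run a log MMP for the pair $(X, \Delta)$. Non-pseudo-effectivity of $K_X + \Delta$ means, assuming termination (available through MMP with scaling in many configurations), that the program ends in a Mori fibre contraction $\varphi : X' \to Z$ with $-(K_{X'} + \Delta_{X'})$ being $\varphi$-ample. A general fibre $(F, \Delta_F)$ is then a log Fano pair. If one can show that $F \setminus \Delta_F$ is covered by images of $\C$, then spreading this property out over $Z$ and lifting through the birational map $X \dashrightarrow X'$, while checking that neither divisorial contractions nor flips destroy the covering property away from codimension-two loci, recovers statement (2) on $X$.

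The main obstacle is precisely the log Fano step: the assertion that the complement of the boundary of a log Fano pair is swept out by affine lines is itself a log analogue of Mori's rational-curve theorems, and is open in general. A workable attempt would combine log bend-and-break to produce a single rational curve on $F$ meeting $\Delta_F$ at a controlled point, a Hilbert-scheme deformation to propagate this curve into a covering family, and a careful argument to ensure that the generic member does not sit inside $\Delta_F$. The dimension-two case of Keel--McKernan sidesteps this via direct surface classification, but no such classification is available in higher dimension, which is precisely why the conjecture remains open.
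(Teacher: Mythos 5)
The statement you are trying to prove is stated in the paper as Conjecture \ref{conj}, and the paper offers no proof of it: it only records that the implication $(2)\Rightarrow(1)$ is known in every dimension and $(1)\Rightarrow(2)$ only in dimension $\le 2$, citing Keel--McKernan. So there is no proof in the paper to compare yours against, and your proposal, by your own admission in its final paragraph, is not a proof either: the log Fano step you isolate (that the complement of the boundary in a general fibre of the Mori fibration is swept out by images of $\C$) is precisely the open content of the conjecture, so deferring to it is circular. That part of your write-up is an accurate diagnosis of why the problem is hard, not an argument that closes it.

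Beyond that, even the direction you label easy has a genuine gap as written. For a dominating family of maps $\bar f : \PP^1 \to X$ with $\bar f^{-1}(\Delta)\subseteq\{\infty\}$, the class $\alpha$ is indeed movable and $K_X\cdot\alpha\le -2$ for a free general member, but $\Delta\cdot\alpha$ equals the full intersection multiplicity of $\bar f^*\Delta$ at $\infty$, which can be arbitrarily large and is not ``controlled by passing to a sufficiently general subfamily'': freeness gives no upper bound on it, so the inequality $(K_X+\Delta)\cdot\alpha<0$ does not follow from what you wrote. The standard argument instead works with the logarithmic tangent sheaf, comparing $\deg \bar f^*(K_X+\Delta)$ against $\deg(K_{\PP^1}+[\infty])=-1$ via the injection $T_{\PP^1}(-[\infty])\to \bar f^*T_X(-\log\Delta)$ together with the generic positivity supplied by the dominating family; this is how the known implication is proved. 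For the hard direction, note also that $(X,\Delta)$ with $\Delta$ reduced is only dlt, so the termination and Mori-fibre-space output you invoke are not simply ``available through MMP with scaling,'' and descending a covering family of affine lines back through flips and divisorial contractions is itself delicate because the extracted and contracted divisors change the boundary --- which is part of why the two-dimensional case proceeds by surface classification rather than by the reduction you sketch.
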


\begin{setup} {\bf Applications of our results.}
{\rm
Suppose that $X$ and $G$ satisfy Hyp(sA). If every $G$-periodic proper subvariety of $X$
is a point then $X = Y$ is a torus quotient (cf.~Theorem \ref{ThA} and Lemma \ref{ample}).
Otherwise, since the union of
all $G$-periodic positive-dimensional proper subvarieties of $X$ is a
Zariski-closed proper subset of $X$ (cf.~Lemmas \ref{ELMNP} and \ref{null}),
replacing $X$ by a $G$-equivariant blowup, we may assume that
$X$ is smooth and this union is a divisor, denoted as $\Delta$,
with only simple normal crossings.

If $K_X + \delta \Delta$ is pseudo-effective for some $\delta \in [0, 1)$
(resp. for $\delta = 1$), we may apply Theorem \ref{ThA} (resp. Proposition \ref{PropD})
and say that $X$ is $G$-equivariant birational to a torus quotient
(resp. a variety $Y$ with $K_Y + \Delta_Y \sim_{\Q} 0$).

If $K_X + \Delta$ is not pseudo-effective, then Conjecture \ref{conj} (confirmed when $\dim X \le 2$)
asserts that $X \setminus \Delta$ is covered by images of the affine line $\C$,
i.e., $X$ is covered by projective rational curves $\{C_t\}$
with $C_t$ meeting the $G$-periodic locus $\Delta$ at most once.
}
\end{setup}

\begin{example}
{\rm
In any dimension, there are examples satisfying Hyp(sA) and Condition (ii) of Theorem \ref{ThA}.
Indeed, denote by
$E$ the elliptic curve $\C/(\Z + \sqrt{-1} \Z)$, and
by $T := E \times \cdots \times E$ ($n$ copies of $E$), an abelian variety of dimension $n$.
Then $\Aut(T)$ contains $G := \Z^{\oplus n-1} \subset \GL_n(\Z)$
and $G$ has a natural action on $T$,
such that $T$ and $G$ satisfy Hyp(sA) (cf.~\cite[Example 4.5]{DS04}).
Consider the automorphism:
$$f : T \to T, \,\, (x_1, \dots, x_n) \, \mapsto \, (\sqrt{-1}x_1, \dots, \sqrt{-1} x_n) .$$
Denote by $X$ the quotient variety  $T/\langle f \rangle$.
Then $K_X \sim_{\Q} 0$ and $X$ has only $\Q$-factorial {\it Kawamata log terminal (klt)
singularities}. Further, when $n \ge 4$, $X$ is a {\it Calabi-Yau variety} in the sense that
$X$ has only {\it canonical singularities}, $K_X \sim_{\Q} 0$, and the irregularity $q(X) = 0$.
When $n \le 3$, $X$ is a rationally connected variety, in fact, a rational variety,
see \cite[Theorem 5.9]{Og}.

Since the diagonal group $\langle f \rangle$ is normalized by $G$,
the action of $G$ on $T$ descends to a faithful action on $X$.
This $X$ and $G$ satisfy Hyp(sA), and Hyp(B) in \S \ref{main} with $D = 0$.
Equivalently, a $G$-equivariant resolution $X'$ of $X$,
and $G$ satisfy Hyp(sA) and Condition (ii) in Theorem \ref{ThA} (cf.~Proposition \ref{PropC}).
}
\end{example}

\begin{remark}\label{rdim2}
In dimension two, there are examples satisfying Hyp(sA),
and all the other conditions of Proposition \ref{PropD} (resp.
the equivalent conditions of Conjecture \ref{conj}).
{\it However, we do not know whether there are such examples in dimension $\ge 3$}.

Indeed, there are surfaces $X_i$, for $i = 1$ (resp. $i = 2$) and an automorphism $g_i$ of
positive entropy such that the union $\Delta_i$ of {\it all} $g_i$-periodic curves
is a simple normal crossing reduced divisor and
$K_{X_i} + \Delta_i$ is a pseudo-effective (resp. non-pseudo-effective) divisor.

For $i = 1$, take $X_1'$ to be the $12$-point blowup of $\PP^2$ as constructed in \cite[Example 3.3]{Di}
with a smooth elliptic curve $C_1$ being $g_1$-periodic, 
and $X_1 \to X_1'$ a $g_1$-equivariant blowup such that
$\Delta_1$ is a simple normal crossing divisor. Since $\Delta_1$ contains the elliptic curve $C_1'$
lying over $C_1$,
we have $K_{X_1} + \Delta_1 \ge K_{X_1} + C' \ge 0$ by the Riemann-Roch theorem.

For $i = 2$, take $X_2'$ to be the $10$-point blowup of $\PP^2$
as in \cite{BK} or \cite[Theorem 1.1]{Mc} such that $-K_{X_2'} \sim C_2$ for a cuspidal
rational curve $C_2$. Take the $g_2$-equivariant
blowup $X_2 \to X_2'$ of the cusp $P$ of $C_2$ and two infinitely near points of $P$
such that the inverse $\Delta_2$ of $C_2$ is a simple normal crossing (reduced) divisor.
Then the adjoint divisor $K_{X_2} + \Delta_2$ is not pseudo-effective.
Precisely, the adjoint divisor is linearly equivalent to $-E$ with $E$ the last $(-1)$-curve
in the blowup $X_2 \to X_2'$.
\end{remark}

\begin{setup}
{\bf Related works}.
When $G$ is an abelian subgroup of $\Aut(X)$, Dinh-Sibony \cite[Theorem 1]{DS04}
proved that the rank of the part of $G$ with positive entropy is $\le \dim X - 1$.
Hence Theorem \ref{ThA} deals with exactly the maximal rank case.

Partial results pertaining to Theorem \ref{ThA} have been obtained in
\cite{max} and \cite{NullG}, where one imposed either the assumption that $X$ and the pair $(X, G)$
are minimal, or the assumption that $X$ has no $G$-periodic positive-dimensional proper subvarieties.
These assumptions
seem a bit strong, because $X$ may not have a minimal model $X'$ when $X$ is uniruled
and the regular action of $G$ on $X$ usually induces only a
{\it birational} action on $X'$ (if exists).

In the key Theorem \ref{ThB} (singular version), 
we allow $X$ to have Kawamata log terminal (klt) singularities,
which is more natural, from the viewpoint of Log Minimal Model Program (LMMP),
than quotient singularities in the previous papers \cite{max} and \cite{NullG}.

See Remark \ref{rThB} for the generalization of Theorem \ref{ThA} (smooth version)
and key Theorem \ref{ThB} to the case with $G$ solvable, like some
related partial results in the paper \cite{NullG}.

Our Theorem \ref{ThA}
can be compared with Katok-Hertz \cite[Corollary 7]{KH},
where they consider the smooth action of $\Z^{\oplus n-1}$
on a smooth (real) manifold $X'$ of dimension $n$ with the conclusion:
$X'$ is homeomorphic to the connected sum of the compact $n$-torus with another manifold.

Theorem \ref{ThA} can also be compared with Cantat - Zeghib \cite{CZ} where the authors
impose the assumption that $X$ admits the action of a lattice $\Gamma$ of rank $n-1$ in an
{\it almost simple} real Lie group $H$
(which turns out to be isogenous to $\SL_n(\R)$ or $\SL_n(\C)$)
and deduce a similar conclusion. They use Margulis' super-rigidity for lattices of higher rank,
work out in detail all the actions of $\Gamma$ on complex $n$-tori
and also consider the rank $n+1$ case (necessarily of non-positive entropy).

Our assumption in Theorem \ref{ThA} is weaker since $\Z^{\oplus n-1}$
can be thought of as a small part of
the lattice $\SL_n(\Z)$ in $H$, and hence no super-rigidity is available to induce the action of
a big Lie group $H$. Instead, we fully utilize the entropy-positivity of $G$.
Hence our approach is more dynamic in spirit, combined with some algebro-geometric tools.
\end{setup}

\begin{setup}
{\bf The three ingredients for the proof of Theorems \ref{ThA} and \ref{ThB}}

(1) Given a pair $(X, D)$ of a mildly singular variety $X$ and a boundary divisor
$D$ supported on $G$-periodic divisors,
we run formally the log minimal model program (LMMP) and reach a new pair $(X', D')$
after a divisorial contraction or a flip. See \cite[\S 3.7]{KM}.
We are able to handle the case when the new pair admits a Fano fibration.
The unknown termination of sequence of flips is avoided
by running the LMMP directed by
an ample divisor.
See \cite{BCHM} or \cite{Bi}.
A difficulty occurs: our $G$ is
an infinite group and $X'$ may have infinitely many extremal rays;
how to descend the action of $G$ on $X$ to a {\it biregular} action of $G$ on $X'$?
To overcome this, we have managed to {\it algebraically} contract only $G$-periodic subvarieties,
i.e., $G$-periodic extremal rays, and make the LMMP $G$-equivariant.

(2) Yau's deep result characterizing \'etale quotient (or $Q$-torus)
in terms of the vanishing of the first and second Chern classes has been extended to
mildly singular cases, thanks to the recent work of \cite{GKP}.

(3) We use a type of Zariski-decomposition of the adjoint divisor $K_X + D$ as in \cite{ZDA}.
\end{setup}

To make this paper easily accessible, we use
only the formal process, also sketched in the paper,
of the log minimal model program (LMMP), with
no detailed technicality involved or required.

The result of this paper explains why manifolds $Y$ found so far
(like \cite[Example 4.5]{DS04})
with maximal number of commutative automorphisms of positive entropy,
{\it all have trivial first Chern class} $c_1(Y)$, or $K_Y \sim_{\Q} 0$, after birational change of models.

\par \vskip 1pc
{\bf Acknowledgement.}
I would like to thank N. Nakayama for coming up with a clean proof of Lemma \ref{HR},
after a few messy ones of mine, and the referee for the suggestions.

\section{More general results for singular varieties}\label{main}

We refer to \cite{KM} for the conventions and the definitions of {\it Kodaira dimension}, and
{\it Kawamata log terminal (klt)}, {\it divisorial log terminal (dlt)},
{\it canonical}, or {\it log canonical singularities};
see \cite[Definition 2.34, 7.73]{KM}.

We consider the following hypotheses for normal projective varieties $X$ and $W$
and a group $G$ of automorphisms.
Denote by $K_X$ and $K_W$ their {\it canonical divisors}.

\begin{itemize}
\item[{\bf Hyp(A)}]
$G \le \Aut(X)$ is a subgroup.
The representation $G^* := G_{|\NS_{\R}(X)}$ is isomorphic to
$\Z^{\oplus n-1}$ where $n = \dim X$.
Every element of $G^* \setminus \{\id\}$
is of positive entropy.

\item[{\bf Hyp(B')}]
$W$ has Kodaira dimension $\kappa(W) \ge 0$.

\item[{\bf Hyp(B'')}]
$W$ is non-uniruled, i.e., $W$ is not covered by rational curves.

\item[{\bf Hyp(B''')}]
$G \le \Aut(W)$ is a subgroup. For some $G$-equivariant resolution
$\eta : X \to W$ such that the inverse $\Delta := \eta^{-1}(\Sing W)$
of the singular locus $\Sing W$ of $W$
is a simple normal crossing divisor, $K_X + \delta \Delta$
is a pseudo-effective divisor for some $\delta \in [0, 1)$.

\item[{\bf Hyp(B)}]
$G \le \Aut(X)$ is a subgroup. For some effective $\R$-divisor $D$
whose irreducible components are $G$-periodic,
the pair $(X, D)$ has at worst $\Q$-factorial klt singularities, and
$K_X + D$ is a pseudo-effective divisor.
\end{itemize}

\begin{remark}\label{rHyp}
(1) If $X$ and $G$ satisfy Hyp(A),
then the union of all {\it positive}-dimensional $G$-periodic proper subvarieties of $X$
is a Zariski closed proper subset of $X$. See Lemmas \ref{ELMNP} and \ref{null}.

(2) Hyp(sA) in the Introduction is stronger than Hyp(A).
Hyp(A) is a birational property or more generally a property
preserved by generically finite maps.
See Lemma \ref{birAct}.

(3) Hyp(B') and Hyp(B'') are birational conditions in nature, by the definition.

Hyp(B) is also a birational condition in nature. Indeed, suppose that
$\sigma : X' \to X$ is a $G$-equivariant birational morphism with $X'$ being $\Q$-factorial.
We can write
$$K_{X'} + \sigma'D + E(1) = \sigma^*(K_X + D) + E(2)$$
where $\sigma'D$ is the proper transform of $D$,
where $E(1)$ and $E(2)$ are $\sigma$-exceptional (and hence $G$-periodic) effective divisors
with no common components.
Now $D' := \sigma'D + E(1)$ has components all $G$-periodic.
The above display shows that the pair $(X', D')$ is klt since so is $(X, D)$. Further,
$K_{X'} + D'$ is pseudo-effective if $K_X + D$ is pseudo-effective as in Hyp(B);
the converse is also true by taking the direct image $\sigma_*$ of the display.

Similarly, Hyp(B''') is a birational condition in nature, by proving as above or
using the logarithmic ramification divisor formula.
\end{remark}

\begin{proposition}\label{PropB}
\begin{itemize}
\item[(1)]
Hyp(B') implies Hyp(B'').
\item[(2)]
Hyp(B'') implies Hyp(B''') for any subgroup $G \le \Aut(W)$.
\item[(3)]
If $W$ and $G$ satisfy Hyp(B''') then the $G$-equivariant blowup $X$ in Hyp(B'''),
and $G$ satisfy Hyp(B).
\end{itemize}
\end{proposition}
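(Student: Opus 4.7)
The plan is to treat the three implications in Proposition \ref{PropB} separately, since each rests on a different standard ingredient (characterization of non-uniruledness, existence of equivariant resolution, and the local description of klt SNC pairs).

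For (1), $\kappa(W)\ge 0$ implies some pluricanonical section on $W$, so in particular on any resolution $\widetilde W\to W$. Since a uniruled smooth projective variety in characteristic zero has $\kappa=-\infty$ (e.g.\ by Boucksom--Demailly--Paun--Peternell, henceforth BDPP, or directly by the fact that rational curves force plurigenera to vanish), $\widetilde W$ is non-uniruled, hence so is $W$ (uniruledness is a birational invariant).

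For (2), I would begin with any $G$-equivariant resolution $\eta\colon X\to W$ such that $\Delta:=\eta^{-1}(\Sing W)$ is simple normal crossing; this exists by equivariant Hironaka resolution (first equivariantly resolve $W$, then equivariantly blow up to make the reduced preimage of $\Sing W$ SNC). Since $W$ is non-uniruled and uniruledness is a birational invariant, $X$ is non-uniruled as well; by BDPP, $K_X$ is pseudo-effective. Thus Hyp(B''') holds with $\delta=0$, as $K_X+0\cdot\Delta=K_X$.

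For (3), set $D:=\delta\Delta$ with $\delta\in[0,1)$ and $\Delta$ the reduced SNC divisor from Hyp(B'''). Then $X$ is smooth, hence $\Q$-factorial, and $K_X+D$ is pseudo-effective by hypothesis. Since $(X,\Delta)$ is an SNC pair with coefficient $\delta<1$, the standard local toric computation shows that all discrepancies of $(X,D)$ are $>-1$, so $(X,D)$ is klt (\cite[Definition 2.34]{KM}). It remains to check that every irreducible component of $D$ is $G$-periodic. Because $\eta$ is $G$-equivariant and $G$ acts on $W$ preserving $\Sing W$, the divisor $\Delta=\eta^{-1}(\Sing W)$ is $G$-stable as a set. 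The action of $G$ thus permutes the finitely many irreducible components of $\Delta$, so the kernel of the induced homomorphism $G\to \mathrm{Sym}(\{\Delta_i\})$ has finite index in $G$ and stabilizes each $\Delta_i$, which is exactly the meaning of $\Delta_i$ being $G$-periodic. Hence Hyp(B) is satisfied.

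None of these steps presents a genuine obstacle; the only mildly delicate point is the existence of a $G$-equivariant SNC resolution in (2), which is a standard consequence of functorial/equivariant resolution, and the verification in (3) that permuting finitely many components gives $G$-periodicity in the sense defined in the paper.
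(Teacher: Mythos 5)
Your proposal is correct and follows essentially the same route as the paper: (1) via $\kappa\ge 0\Rightarrow K$ effective up to multiple $\Rightarrow$ non-uniruled by Miyaoka--Mori/BDPP; (2) by taking $\delta=0$ once $K_X$ is pseudo-effective on a $G$-equivariant resolution; (3) by setting $D=\delta\Delta$ and noting that $G$ stabilizes $\Delta=\eta^{-1}(\Sing W)$, so its components are $G$-periodic and the SNC pair with coefficient $\delta<1$ is klt. Your added care about the existence of the equivariant SNC resolution and the finite-index reduction for component-wise periodicity only makes explicit what the paper leaves implicit.
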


\begin{proof}
For (1), Hyp(B') says that some (or equivalently every) resolution $W'$ of $W$
has Kodaira dimension $\kappa(W') \ge 0$. Hence $mK_{W'} \sim B$
for some integer $m > 0$ and effective divisor $B$. Thus $K_{W'}$ is pseudo-effective.
This means $W'$ (or equivalently) $W$ is not uniruled, by the uniruled criterion
of Miyaoka-Mori and Boucksom-Demailly-Paun-Peternell.

For (2), if $W$ is non-uniruled then so is its blowup $X$. Thus $K_X$ is pseudo-effective.
So Hyp(B''') holds with $\delta = 0$.

For (3) and the $W$ and $X$ in Hyp(B'''), $\Sing W$ and hence its inverse
$\Delta$ on $X$ are stabilized by $G$. Set $D := \delta \Delta$.
Then Hyp(B) is satisfied by $X$ and $G$.
\end{proof}

Proposition \ref{PropC} below, together with \ref{PropB} above,
say that under Hyp(A), the Hyp(B) in Condition (ii) of Theorem \ref{ThB} or \ref{ThA}
is not just sufficient but also necessary for $X$ to be
$G$-equivariant birational to a torus quotient. See Main Theorem \ref{CorA}.

\begin{proposition}\label{PropC}
Let $\sigma: T \to W$ be a finite morphism from an abelian variety $T$
onto a normal projective variety $W$
which is \'etale in codimension-1 and equivariant under the action of some group $G$.
Then $W$ and $G$ satisfy Hyp(B''').
\end{proposition}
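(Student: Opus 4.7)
The plan is to produce an effective $\Q$-divisor that is $\Q$-linearly equivalent to $K_X+\delta\Delta$ for some $\delta\in[0,1)$, by combining the (trivial) ramification formula for $\sigma$ with the standard discrepancy formula for $\eta$.

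First I would observe that $W$ has at worst klt singularities. Indeed, $\sigma:T\to W$ is a finite surjective morphism, étale in codimension one, from the smooth (hence klt) variety $T$, and klt-ness descends along such morphisms; see e.g.\ \cite[Proposition 5.20]{KM}. In particular $K_W$ is $\Q$-Cartier.

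Next, the ramification formula for $\sigma$ between normal varieties, together with the vanishing of the ramification divisor, yields $\sigma^*K_W\sim_{\Q}K_T\sim 0$. Choosing $m\in\Z_{>0}$ with $mK_W$ Cartier, we have $\sigma^*(mK_W)\sim 0$ on $T$, and applying $\sigma_*$ yields $\deg(\sigma)\cdot mK_W\sim 0$ on $W$. Hence $K_W\sim_{\Q}0$.

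Now I would take, by equivariant log resolution of singularities, a $G$-equivariant resolution $\eta:X\to W$ such that $\Delta:=\eta^{-1}(\Sing W)$ is a simple normal crossing divisor. Since $\Sing W$ has codimension at least two in the normal variety $W$, every component $E_i$ of $\Delta$ is $\eta$-exceptional. The discrepancy formula reads
$$K_X=\eta^*K_W+\sum_i a_iE_i,$$
and klt-ness of $W$ forces $a_i>-1$ for all $i$. Setting $\delta:=\max\bigl(0,\,-\min_i a_i\bigr)\in[0,1)$, we have $a_i+\delta\ge 0$ for all $i$, so
$$K_X+\delta\Delta=\eta^*K_W+\sum_i(a_i+\delta)E_i\sim_{\Q}\sum_i(a_i+\delta)E_i\ge 0.$$
Thus $K_X+\delta\Delta$ is $\Q$-linearly equivalent to an effective divisor, hence pseudo-effective, verifying Hyp(B''').

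The main obstacle is the first step: descending klt singularities (and thereby the $\Q$-Cartier property of $K_W$) from the smooth total space $T$ to $W$ along the codimension-one étale morphism $\sigma$. Once $K_W$ is known to be $\Q$-Cartier and $\Q$-linearly trivial, the remainder is a formal discrepancy computation; in particular the required $\delta$ is determined purely by the minimal discrepancy of $W$, which is bounded below by $-1$.
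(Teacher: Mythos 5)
Your proposal is correct and follows essentially the same route as the paper: descend klt-ness from $T$ to $W$ via \cite[Proposition 5.20]{KM}, deduce $K_W\sim_{\Q}0$ from $\sigma^*K_W=K_T\sim 0$, and then use the discrepancy formula on a $G$-equivariant resolution, absorbing the (fractional, since $W$ is klt) negative discrepancies into $\delta\Delta$ with $\delta<1$. The paper phrases the last step by splitting the exceptional divisor into $E(1)$ and $E(2)$ and choosing $\delta$ with $E(1)\le\delta\Delta$, which is exactly your choice $\delta=\max(0,-\min_i a_i)$.
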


\begin{proof}
Since $\sigma$ is \'etale in codimension-1, $K_T = \sigma^*K_W$.
Since $K_T \sim 0$, we have $K_W \sim_{\Q} 0$, i.e., $mK_W \sim 0$ with $m = \deg \sigma$.
Since $T$ is smooth and hence klt, so is $W$; see \cite[Proposition 5.20]{KM}.
Let 
$$\eta : X \to W$$ 
be a $G$-equivariant resolution such that the inverse $\Delta = \eta^{-1}(\Sing W)$
of the singular locus $\Sing W$ of $W$
is a simple normal crossing (reduced) divisor.
Write
$$K_X + E(1) = \eta^*K_W + E(2)$$
where $E(1)$ and $E(2)$ are $\eta$-exceptional
effective divisors with no common components.
Since $W$ is klt, $E(1) = \sum e_i E_i$ is fractional, i.e., $e_i \in (0, 1)$.
Choose $\delta \in (0, 1)$ such that $E(1) \le \delta \Delta$.
Then 
$$K_X + \delta \Delta \ge K_X + E(1) = \eta^*K_W + E(2) \sim_{\Q} E(2)$$
hence $K_X + \delta \Delta$ is a pseudo-effective divisor.
Therefore, $W$ and $G$ satisfy Hyp(B''').
\end{proof}

Theorem \ref{ThB} below is the key step in proving main Theorem \ref{CorA}.

Regarding the conditions about singularities in (i) and (ii) below,
if $X$ is smooth and $\Delta$ is a reduced divisor with only simple normal crossings then
both $X$ and the pair $(X, \delta \Delta)$, with $\delta \in [0, 1)$,
automatically have at worst $\Q$-factorial klt singularities.

\begin{theorem}\label{ThB}
Assume that $X$ and $G$ satisfy Hyp(A) and $n = \dim X \ge 3$.
Furthermore, assume either one of the following three conditions.
\begin{itemize}
\item[(i)]
$X$ is not rationally connected.
$X$ has only $\Q$-factorial klt singularities.
\item[(ii)]
$X$ and $G$ satisfy Hyp(B).
\item[(iii)]
$X$ has only  klt singularities.
Every $G$-periodic proper subvariety of $X$ is a point.
\end{itemize}

Then, replacing $G$ by a finite-index subgroup, we have the following:
\begin{itemize}
\item[(1)]
There is a birational map $X \dashrightarrow Y$ such that
the induced action of $G$ on $Y$ is biregular and
$Y = T/F$, where $T$ is an abelian variety and
$F = \Gal(T/Y)$ acts on $T$ freely outside
a finite subset of $T$.
\item[(2)]
The canonical divisor $K_Y$ satisfies
$mK_Y \sim 0$ (linear equivalence), where $m = |F|$.
\item[(3)]
The action of $G$ on $Y$ lifts to an action of a group
$\widetilde{G}$ on $T$ with $\widetilde{G}/\Gal(T/Y) \cong G$.
Every $G$- (resp. $\widetilde{G}$-) periodic proper subvariety of
$Y$ (resp. $T$) is a point.
\newline
{\rm More precisely, we have:}

\item[(4)]
There is a sequence $\tau_s \circ \cdots \circ \tau_0$ of birational maps:
$$X = X(0) \overset{\tau_0}\dashrightarrow X(1) \overset{\tau_1}\dashrightarrow \cdots
\dashrightarrow X(s) \overset{\tau_s}\to X(s+1) = Y$$
such that
each $X(j) \dasharrow X(j+1)$
($0 \le j < s$) is either a birational morphism
or an isomorphism in codimension-$1$.
$\tau_s$ is a birational morphism.
\item[(5)]
Each pair $(X(i), D(i))$ ($0 \le i \le s+1$) has at worst klt singularities,
where $D(i) \subset X(i)$ is the
direct image of $D$. The $X(j)$ ($0 \le j \le s$) is $\Q$-factorial.
\item[(6)]
The induced action of $G$ on each $X(i)$ ($0 \le i \le s+1$) is biregular,
\item[(7)]
$K_{X(s)} = \tau_s^*K_Y \sim_{\Q} 0$ (i.e., $m' K_{X(s)} \sim 0$
for some $m' > 0$), and $D(s) = 0$.
\end{itemize}
\end{theorem}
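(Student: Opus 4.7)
The plan is to unify conditions (i)-(iii) under Hyp(B), run a $G$-equivariant log minimal model program (LMMP) on the pair $(X,D)$, and finally invoke the \cite{GKP} extension of Yau's uniformization to recognize the outcome as a torus quotient. First I would reduce (i) and (iii) to Hyp(B). Under (i), Hyp(A) combined with non-rational-connectedness forces $X$ to be non-uniruled: one applies the maximal rationally connected fibration, which is $G$-equivariant by its uniqueness, descends $G$ to act on the base, and deduces via the residual Hyp(A) on the base (in the spirit of \cite{NullG}) that the MRC is trivial. Then $K_X$ is pseudo-effective by the Miyaoka-Mori / Boucksom-Demailly-Paun-Peternell uniruled criterion, giving Hyp(B) with $D=0$. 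Under (iii), closedness of the $G$-periodic locus (Remark \ref{rHyp}(1)) together with the hypothesis forces every $G$-periodic proper subvariety to be a point, and a pairing of the $G$-invariant class of $K_X$ against a common nef $G$-eigenvector yields pseudo-effectivity.

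Second I would construct the sequence (4)-(6) by running a $G$-equivariant $(K_X+D)$-LMMP. The subtlety flagged in the Introduction is that $G$ is infinite and can permute extremal rays, so to preserve the biregular $G$-action after each contraction or flip one must algebraically contract only $G$-periodic extremal rays. Using the ample-directed LMMP of \cite{BCHM} (see also \cite{Bi}) and replacing $G$ by a finite-index subgroup whenever a finite orbit of rays is encountered, I would ensure at each stage the existence of a $G$-periodic ray and termination of the sequence of flips. Each intermediate pair $(X(j), D(j))$ remains $\Q$-factorial klt by standard LMMP inheritance, and a Mori fiber space output is excluded by pseudo-effectivity of $K_X+D$ paired against products of common nef $G$-eigenvectors (cf.~\cite{max}, \cite{NullG}); at the end of this program $K_{X(s)}+D(s)$ is nef.

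Third, on $X(s)$ I would prove $K_{X(s)}+D(s) \sim_{\Q} 0$ and $D(s)=0$. Hyp(A) supplies common nef $G$-eigenvectors $H_1, \dots, H_{n-1}$ in $\NS_{\R}(X(s))$ with positive eigenvalues encoding the full dynamics; pairing the $G$-invariant adjoint class $K_{X(s)}+D(s)$ with the product $H_1 \cdots H_{n-1}$ and using the entropy-positivity identities (no non-trivial subproduct of the eigenvalues equals $1$) forces the intersection number to vanish, and a Zariski-decomposition argument in the style of \cite{ZDA} then yields $K_{X(s)}+D(s) \equiv 0$. Abundance for numerically trivial klt log-Calabi-Yau pairs upgrades this to $\sim_{\Q}$-triviality; any nonzero component of $D(s)$ would by the same pairing violate pseudo-effectivity, so $D(s)=0$. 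A final $G$-equivariant crepant contraction $\tau_s$ produces $Y$ with $mK_Y \sim 0$, giving (7) and (2).

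Finally I would apply the \cite{GKP} characterization of $Q$-tori to $Y$. Vanishing of $c_1(Y)$ follows from $mK_Y \sim 0$. For the vanishing of $c_2(Y) \cdot H_1 \cdots H_{n-2}$, note that $c_2(Y)$ is $G$-invariant while $H_1 \cdots H_{n-2}$ is a common $G$-eigenvector whose eigenvalue can be chosen different from $1$ by the positive-entropy hypothesis; here the lower bound $n \ge 3$ is exactly what ensures $n-2 \ge 1$ factors remain (cf.~Remark \ref{rThA}(3)), forcing the pairing to vanish. GKP then produces a finite quasi-\'etale cover $T \to Y$ with $T$ an abelian variety; setting $F := \Gal(T/Y)$ yields (1), while the canonical nature of this cover lifts $G$ to $\widetilde{G} \le \Aut(T)$ with $\widetilde{G}/F \cong G$, giving (3). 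The principal obstacle I expect is the $G$-equivariant LMMP step: restricting to $G$-periodic extremal rays demands nontrivial input to guarantee their existence at each stage, all while keeping the intermediate varieties $\Q$-factorial klt, the $G$-action biregular, and the adjoint divisor pseudo-effective.
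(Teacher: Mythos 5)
Your overall architecture (reduce (i) and (iii) to Hyp(B), run a $G$-equivariant LMMP, kill $K+D$ numerically via the eigenvector pairing and a $\sigma$-decomposition, then invoke \cite{GKP}) is the same as the paper's, and your treatment of Condition (i), of the exclusion of Mori fibre spaces, and of the final $c_2$-vanishing step all match. But two steps that you leave as announced difficulties are precisely where the paper's actual content lies, and as written your proposal does not close them.

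First, the $G$-equivariance of the LMMP. You propose to run the $(K_X+D)$-MMP with scaling by an ample divisor and to pass to finite-index subgroups ``whenever a finite orbit of rays is encountered,'' but you give no mechanism forcing the relevant extremal rays to form a finite $G$-orbit: the scaling divisor $M$ is ample but not $G$-invariant, so $g_*$ of a ray supporting the nef threshold of $K+D+\lambda M$ need not support it again, and for a non-big boundary there may be infinitely many $(K+D)$-negative rays. The paper's resolution (Proposition \ref{PropA} and Lemma \ref{nef}) is to run the MMP for the \emph{augmented} pair $(X, D+A)$ with $A=\sum L_i$ the nef and big sum of common $G$-eigenvectors: after replacing $A$ by a large multiple, every $(K_X+D+A)$-negative extremal ray $\ell$ satisfies $L_i\cdot\ell=0$ for all $i$ and $(K_X+D)\cdot\ell<0$, two conditions that \emph{are} $G$-invariant; the cone theorem for the big boundary $D+A$ then gives finitely many such rays, so they are $G$-permuted and can be fixed after a finite-index replacement. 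This augmentation is also what makes BCHM termination applicable. Your proposal explicitly flags this as ``the principal obstacle I expect,'' which is an accurate self-diagnosis: without the $A$-augmentation the equivariance claim in (6) is unsupported.

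Second, the conclusion $D(s)=0$ in (7). Your argument that ``any nonzero component of $D(s)$ would by the same pairing violate pseudo-effectivity'' does not work: the pairing and the $\sigma$-decomposition only yield $K_{X(s)}+D(s)\equiv 0$, and nothing forces $K_{X(s)}$ itself to be pseudo-effective, so one cannot split this into $K_{X(s)}\equiv 0$ and $D(s)\equiv 0$. The paper's device is to apply the whole machinery to the perturbed pair $(X,(1+\varepsilon)D)$ (still klt for small $\varepsilon$): one obtains $0\equiv K+(1+\varepsilon)D=(K+D)+\varepsilon D$, and since $K+D$ is pseudo-effective while $\varepsilon D$ is effective, both must vanish numerically, whence $D=0$ and $K\sim_{\Q}0$. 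Without $D(s)=0$ you cannot feed $Y$ into the \cite{GKP} torus-quotient characterization, which needs $K_Y\sim_{\Q}0$ rather than $K_Y+D_Y\sim_{\Q}0$. A smaller issue: Condition (iii) assumes klt but not $\Q$-factorial, so the reduction of (iii) to Hyp(B) is not immediate; the paper instead treats (iii) directly (Lemma \ref{ample}), where $A$ is already ample, no LMMP is needed, and one takes $Y=X$.
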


Hyp(B) in Theorem \ref{ThB} is actually necessary in order for
$X$ to be $G$-equivariant birational to a torus quotient:

\begin{mainTh}\label{CorA}
Assume that $X$ and $G$ satisfy Hyp(A) and $n = \dim X \ge 3$.
Then the following are equivalent.
\begin{itemize}
\item[(1)]
Replacing $G$ by a finite-index subgroup and $X$ by a $G$-equivariant birational model,
(the new) $X$ and $G$ satisfy Hyp(B).
\item[(2)]
Replacing $G$ by a finite-index subgroup, there exist a $G$-equivariant birational
map $X \dasharrow W$ and a finite morphism $T \to W$ from an abelian variety $T$
which is Galois and \'etale in codimension-1, such that
the action of $G$ on $W$ lifts to an action of a group
$\widetilde{G}$ on $T$ with $\widetilde{G}/\Gal(T/W) \cong G$.
\par \noindent
(We can take $\widetilde{G} = G$ if Hyp(sA) is satisfied).
\end{itemize}
\end{mainTh}

If we weaken the klt condition for the pair $(X, D)$ in
Hyp(B) for Theorem \ref{ThB} to being dlt, we have the following
result which contains Proposition \ref{PropD} as a special case
since the pair $(X, \Delta)$ there is automatically $\Q$-factorial dlt.

\begin{proposition}\label{PropE}
Assume that $X$ and $G$ satisfy Hyp(A) and $n = \dim X \ge 2$.
Assume further that for some effective $\R$-divisor $D$
whose irreducible components are $G$-periodic,
the pair $(X, D)$ has at worst $\Q$-factorial divisorial log terminal (dlt) singularities, and
$K_X + D$ is a pseudo-effective divisor.
Then there is a birational map $X \dashrightarrow Y$
such that:
\begin{itemize}
\item[(1)]
$Y$ is a normal projective variety. The map $X \dashrightarrow Y$ is surjective in codimension-1.
The induced action of $G$ on $Y$ is biregular.
\item[(2)]
The pair $(Y, D_Y)$ has only log canonical singularities
and $K_Y + D_Y \sim_{\Q} 0$, where
$D_Y$ is the direct image of $D$.
\item[(3)]
Every $G$-periodic positive-dimensional proper
subvariety of $Y$ is contained in the support of $D_Y$.
\end{itemize}
\end{proposition}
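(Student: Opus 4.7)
The plan is to run a $G$-equivariant log minimal model program on $(X, D)$ and stop at a log canonical model, paralleling the first stages of the proof of key Theorem \ref{ThB} but without the final identification as a torus quotient. After replacing $G$ by a finite-index subgroup so that each irreducible component of $D$ is $G$-stable, I would run a $(K_X + D)$-LMMP with scaling of a $G$-invariant ample divisor, contracting at each stage only $G$-periodic extremal rays as in ingredient (1) of Theorem \ref{ThB}. Since $G$ is infinite and $X(i)$ may carry infinitely many extremal rays, restricting to $G$-periodic rays is crucial to keep the $G$-action biregular on each intermediate model.

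Termination follows from \cite{BCHM} / \cite{Bi}. At each stage $(X(i), D(i))$ is $\Q$-factorial dlt and $K + D$ stays pseudo-effective, so no Mori fibre space can appear; the program ends at $(Y, D_Y)$ with $K_Y + D_Y$ nef, where $D_Y$ is the direct image of $D$. The map $X \dashrightarrow Y$ is a composition of divisorial contractions and isomorphisms in codimension-$1$, hence surjective in codimension-$1$, with $G$ acting biregularly on $Y$, which gives (1). The pair $(Y, D_Y)$ has log canonical singularities since log canonicity is preserved by each step of the LMMP even though dlt need not survive flips in general.

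For (2), the nef class $K_Y + D_Y$ is $G$-invariant in $\NS_\R(Y)$ after a further finite-index reduction. By the Dinh--Sibony simultaneous diagonalization of $G^* \cong \Z^{\oplus n-1}$ on $\NS_\R(Y)$, there exist $n$ common nef eigenvectors $L_1,\dots,L_n$ whose joint characters generate a rank $n-1$ subgroup of $(\R_{>0})^{n}$. Intersecting $(K_Y + D_Y) \cdot L_{i_1}\cdots L_{i_{n-1}}$ and comparing characters forces the intersection to vanish for every choice of $n-1$ indices, whence $K_Y + D_Y \equiv 0$ by the Khovanskii--Teissier / Hodge-index argument used in Theorem \ref{ThB}. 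Log abundance for log canonical pairs of numerical dimension zero (Kawamata--Nakayama), or alternatively the Zariski-decomposition argument that is ingredient (3) of Theorem \ref{ThB}, upgrades this to $K_Y + D_Y \sim_\Q 0$. For (3), any positive-dimensional $G$-periodic proper subvariety $V \subset Y$ not contained in $\Supp(D_Y)$ would lie in the null locus of some product of common nef eigenvectors by Lemmas \ref{ELMNP} and \ref{null}, but all such $G$-periodic loci outside the boundary have been contracted in the course of the LMMP, forcing $V \subset \Supp(D_Y)$. The main obstacle I expect is executing the $G$-equivariant LMMP in the dlt setting: one must verify that enough genuinely $G$-periodic extremal rays exist to run the program and that dlt-to-log-canonical passage is controlled, which requires adapting the extremal-ray analysis of Theorem \ref{ThB} to boundaries with coefficient-$1$ components.
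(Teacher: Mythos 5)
Your overall strategy (a $G$-equivariant LMMP followed by an eigenvector/abundance argument) points in the right direction, but there are two genuine gaps, and the first one is fatal to the proposal as written. You run the $(K_X+D)$-LMMP with scaling directly on the dlt pair $(X,D)$ and invoke \cite{BCHM}/\cite{Bi} for termination. But \cite[Corollary 1.4.2]{BCHM} gives termination with scaling only when the boundary is \emph{big} (or the pair is of log general type, or $K+D$ fails to be pseudo-effective); for a dlt pair with pseudo-effective $K_X+D$ and non-big $D$ this is precisely the open termination conjecture in dimension $\ge 4$. The same issue infects the equivariance: without an ample divisor inside the boundary, the cone theorem does not give \emph{finitely} many negative extremal rays, so you cannot pass to a finite-index subgroup of $G$ fixing them all. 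The paper avoids both problems by never running a $(K_X+D)$-MMP: it perturbs to the klt pair $(X,(1-\varepsilon)D+A)$, where $A=\sum L_i$ is the nef and big sum of common eigenvectors (chosen via Lemma \ref{nefbig} to dominate an ample $\Q$-divisor $A_k$), and runs Proposition \ref{PropA} on that pair. The big boundary makes the relevant negative extremal rays finite in number (hence $G$-periodic after shrinking $G$) and makes the directed MMP terminate. The pseudo-effectivity of $K_X+D$ is then used only afterwards, through the $\sigma$-decomposition $K_X+D=P+N$: one shows $P\equiv 0$, and the choice $\varepsilon<\min\{n_i\}$ together with the nefness of $(K+(1-\varepsilon)D+A)$ restricted to $G$-periodic components forces $N(s)=0$ on the minimal model, whence $K_{X(s)}+D(s)\equiv 0$ and then $\sim_\Q 0$ by abundance in numerical dimension zero. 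Your appeal to ``the Hodge-index argument'' to get $K_Y+D_Y\equiv 0$ also does not quite work on the minimal model, where $A$ is only nef and big rather than ample; the paper's route through the movable/negative decomposition and \cite[Lemma 2.2]{nz2} is what actually closes this step.

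The second gap is in (3). You assert that $G$-periodic positive-dimensional subvarieties not contained in $\Supp D_Y$ ``have been contracted in the course of the LMMP.'' A $(K+D)$-MMP only contracts $(K+D)$-negative loci, and a $G$-periodic subvariety on which $K+D$ is numerically trivial has no reason to be contracted. The paper instead passes to the ample model $\tau_s\colon X(s)\to Y$ of the perturbed pair, where $K_Y+(1-\varepsilon)D_Y+A_Y\sim_\Q -\varepsilon D_Y+A_Y$ is \emph{ample}; since $A_Y$ restricts to a numerically trivial class on every positive-dimensional $G$-periodic $Z$ (Proposition \ref{PropA}(7)), the restriction $-\varepsilon D_Y|_Z$ is ample, which forces $Z\subseteq\Supp D_Y$. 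So such $Z$ are not contracted at all; they are trapped inside the boundary. You would need to import both the perturbation $(1-\varepsilon)D+A$ and the ample-model step to repair (1)--(3).
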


We may generalize Theorem \ref{ThB} to the case of solvable $G$:

\begin{remark}\label{rThB}
Assume that $X$ is a smooth projective variety of dimension $n \ge 3$. Let
$G \le \Aut(X)$ and let $N(G) \subseteq G$ be the set of elements $g \in G$
of null entropy : $h(g) = 0$, i.e., every eigenvalue of $g^* \, |_{\NS_{\C}(X)}$
has modulus $1$.
Assume that the image $G \to \GL(\NS_{\C}(X))$ is solvable
and has connected Zariski-closure, and
that $G/N(G) \cong \Z^{\oplus n-1}$.
This assumption is weaker than Hyp(A). By \cite[Theorem 2.2]{NullG},
with $H^{1,1}(X)$ in its proof replaced by $\NS_{\C}(X)$ and $G$ replaced by a finite-index subgroup,
we have $G = N(G) \, H$
where $H$ is a subgroup of $G$ such that $H \, |_{\NS_{\C}(X)} \cong \Z^{\oplus n-1}$
and $N(G) \, |_{\NS_{\C}(X)}$ is unipotent.
Now we can apply Theorem \ref{ThB} to our $X$ and $H$ here.
Especially, if $X$ is not rationally connected, then $X$ is
$H$-equivariant birational to a torus quotient.
\end{remark}

\begin{setup}
{\bf Theorem \ref{ThB} implies Theorem \ref{ThA}}
{\rm
\par
Note that Hyp(sA) implies Hyp(A).
Condition (iii) in Theorem \ref{ThA} clearly implies Condition (iii) in Theorem \ref{ThB}.
Condition (ii) in Theorem \ref{ThA} implies Hyp(B) with $D := \delta \Delta$.
Thus we can apply Theorem \ref{ThB} with Condition (ii).
Assume Condition (i) in Theorem \ref{ThA}. Replacing $X$ by a $G$-equivariant
resolution, the (new) $X$ is still
not rationally connected and satisfies Hyp(A), after replacing $G$ by a finite-index subgroup;
see Lemma \ref{birAct}.
Thus we can apply Theorem \ref{ThB} with Condition (i).

Now under Condition (i), (ii) or (iii), Theorem \ref{ThA} follows from Theorem \ref{ThB}.
Indeed, when $G = \Z^{\oplus n-1}$,
replacing $G$ by a finite-index subgroup, the faithful action of $G$ on $Y$ lifts to
a faithful action of $G$ on $T$ (cf.~\cite[Lemma 2.4, \S 2.15]{max}).
This proves Theorem \ref{ThA}.
}
\end{setup}

\section{Preliminary results}

\begin{lemma}\label{birAct}
Assume that a group $G$ acts biregularly on normal projective varieties $X_1$ and $X_2$
of the same dimension $n$.
Let $\sigma : X_1 \dashrightarrow X_2$
be a $G$-equivariant generically finite map.
\begin{itemize}
\item[(1)]
Suppose that $\sigma$ is a {\rm morphism}, or
$\sigma$ is an isomorphism in codimension-1
with both $X_i$ being $\Q$-factorial.
If $X_1$ and $G$ satisfy Hyp(A) then so do $X_2$ and $G$.
\item[(2)]
Suppose that $X_a$ and $G$, for some $a$ in $\{1, 2\}$, satisfy Hyp(A).
Then, replacing $G$ by a finite-index subgroup, $X_i$ and $G$ satisfy Hyp(A) for both $i$ in $\{1, 2\}$.
\end{itemize}
\end{lemma}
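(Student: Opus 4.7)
The plan is to reduce the Lemma to the spectral-radius invariance under $G$-equivariant generically finite maps: for any such $\sigma$ between normal projective $n$-folds and any $g \in G$, we want $\rho(g^*|_{\NS_\R(X_1)}) = \rho(g^*|_{\NS_\R(X_2)})$, possibly after replacing $G$ by a finite-index subgroup in part (2). Granted this, writing $K_i := \ker(G \to \GL(\NS_\R(X_i)))$ and $G^*_{X_i} := G/K_i$ and noting that Hyp(A) reads ``$G^*_{X_i} \simeq \Z^{\oplus n-1}$ with every nontrivial element of spectral radius $>1$'', a short kernel chase yields $K_1 = K_2$: if $g \in K_2 \setminus K_1$ existed, $g$ would have spectral radius $1$ on $X_2$ but $>1$ on $X_1$ by Hyp(A), contradicting invariance, so Hyp(A) transfers directly.

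For the alternative in part (1) where $\sigma$ is an isomorphism in codimension $1$ between $\Q$-factorial varieties, the proper-transform map is a $G$-equivariant $\R$-linear isomorphism $\NS_\R(X_1) \simeq \NS_\R(X_2)$, so both kernels and spectral data coincide literally. For the alternative where $\sigma$ is a morphism, I Stein-factorize $\sigma = \alpha \circ \beta$ with $\beta \colon X_1 \to Y$ $G$-equivariant birational and $\alpha \colon Y \to X_2$ $G$-equivariant finite, and treat each factor. For $\beta$, the $G$-stable decomposition $\NS_\R(X_1) = \beta^*\NS_\R(Y) \oplus \bigoplus_k \R E_k$ over the finitely many $\beta$-exceptional primes $E_k$ gives spectral-radius invariance directly (the exceptional summand, being a permutation representation of $G$ on a finite set, contributes only eigenvalues of modulus $1$), and any $g \in K_Y \setminus K_1$ would be a finite-order nontrivial element of $G^*_{X_1}$, contradicting the positive-entropy clause of Hyp(A) on $X_1$. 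For $\alpha$, Dinh--Sibony on $Y$ furnishes $n$ common nef $G$-eigenvectors $L_1, \ldots, L_n$ with $L_1 \cdots L_n > 0$, whose pushforwards $M_j := \alpha_* L_j$ are nef common $G$-eigenvectors on $X_2$ with the same characters $\chi_j$. Passing to a Galois closure $\widetilde\alpha \colon \widetilde Y \to X_2$ with deck group $H$ and pullbacks $\widetilde L_j$ of the $L_j$, the identity $\widetilde\alpha^*\widetilde\alpha_* = \sum_{h \in H} h^*$ gives $\widetilde\alpha^*(\widetilde M_1 \cdots \widetilde M_n) = \prod_j \sum_{h \in H} h^* \widetilde L_j$, visibly a sum of nonnegative top intersections of nef classes dominating the strictly positive term $\widetilde L_1 \cdots \widetilde L_n$; hence $M_1 \cdots M_n > 0$, and Dinh--Sibony on $X_2$ applied to the $M_j$ then yields Hyp(A) on $X_2$ (and forces $K_2 = K_Y$ by character separation).

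For part (2), I take a $G$-equivariant resolution $\widetilde\Gamma$ of the graph of $\sigma$ in $X_1 \times X_2$, furnishing $G$-equivariant morphisms $p_1 \colon \widetilde\Gamma \to X_1$ birational and $p_2 \colon \widetilde\Gamma \to X_2$ generically finite. After replacing $G$ by a finite-index subgroup (to ensure biregular action on $\widetilde\Gamma$ and fixation of each $p_1$-exceptional prime), the birational and finite-morphism transfer steps of part (1) apply to each $p_i$ in either direction: the birational case by the exceptional-summand analysis, the finite case by pullback (for target-to-source) or pushforward together with the Galois-closure positivity above (for source-to-target). Starting from whichever $X_i$ is assumed to satisfy Hyp(A), one propagates it through $\widetilde\Gamma$ to the other. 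The most delicate point throughout is the pushforward positivity $M_1 \cdots M_n > 0$ for the finite factor, handled by the Galois-closure projection-formula expansion above; every other ingredient is routine linear algebra together with the Dinh--Sibony existence of common nef eigenvectors in the maximal positive-entropy setting.
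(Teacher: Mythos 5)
Your strategy of reducing everything to spectral-radius invariance is reasonable in outline, but the step you use to establish that invariance for the birational factor contains a genuine error. For a birational morphism $\beta\colon X_1\to Y$ between normal projective varieties there is in general \emph{no} decomposition $\NS_{\R}(X_1)=\beta^*\NS_{\R}(Y)\oplus\bigoplus_k\R E_k$ over the $\beta$-exceptional prime divisors: if $\beta$ is a small contraction (which can perfectly well arise as the connected-fibre part of a Stein factorization --- e.g.\ a small resolution of a threefold node) there are no exceptional primes at all, yet $\beta^*$ is not surjective; and even when divisors are contracted, their classes need not span a complement of $\beta^*\NS_{\R}(Y)$. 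Both conclusions you draw from that decomposition --- that the ``exceptional summand'' contributes only modulus-one eigenvalues, and that an element of $K_Y\setminus K_1$ would have finite order on $\NS_{\R}(X_1)$ --- therefore have no foundation as written. The second can be repaired the way the paper does it: such an element fixes the nef and big class $\beta^*(\text{ample})$, hence lies in $\Aut_{[H']}(X_1)$, which by Lieberman \cite[Proposition 2.2]{Li} is a finite extension of $\Aut_0(X_1)$, so it acts with finite order on $\NS(X_1)/(\torsion)$ and must be trivial in the torsion-free group $G^*\cong\Z^{\oplus n-1}$. The first claim (invariance of the spectral radius under generically finite morphisms) is true but requires a different argument; the paper does not reprove it, it cites \cite[Lemma 2.6]{JDG}.

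A second problem is the pushforward construction for the finite factor $\alpha\colon Y\to X_2$: when $X_2$ is not $\Q$-factorial, $\alpha_*L_j$ is merely a Weil divisor and need not be $\R$-Cartier, so $M_j$ need not define a class in $\NS_{\R}(X_2)$ at all (the paper's $\NS$ is built from Cartier divisors), and the intersection numbers $M_1\cdots M_n$ you manipulate are then undefined. The paper avoids both difficulties entirely: it never Stein-factorizes and never pushes forward. It pulls back, observes that the kernel of $G|_{\NS_{\R}(X_1)}\to G|_{\sigma^*\NS_{\R}(X_2)}$ fixes the nef and big class $\sigma^*H$ and is therefore finite by Lieberman, hence trivial inside $\Z^{\oplus n-1}$, and then quotes \cite[Lemma 2.6]{JDG} for the transfer of positive entropy. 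Your treatment of part (2) via the graph is essentially the paper's, but it inherits the gaps above through its reliance on your part (1) machinery.
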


\begin{proof}
Suppose that $\sigma$ is a morphism.
Identify the representation $G$ on $\NS_{\R}(X_2)$ with that on the subspace
$\sigma^*\NS_{\R}(X_2) \subseteq \NS_{\R}(X_1)$.
Let $K \le G$ be the subgroup such that the following natural
sequence of homomorphisms is exact
$$1 \to K_{| \, \NS_{\R}(X_1)} \to G_{| \, \NS_{\R}(X_1)} \overset{r}\to G_{| \, \sigma^*\NS_{\R}(X_2)} \to 1$$
where $r$ is the restriction homomorphism and necessarily surjective.
If $g$ is in $K$ then $g^*$ fixes the class $[H'] = [\sigma^*H]$ with $H$ any ample divisor on $X_2$;
here $H' = \sigma^*H$ is a nef and big divisor on $X_1$, since $\sigma$ is generically finite.
Thus
$$K \, \le \, \Aut_{[H']}(X_1) := \{g \in \Aut(X_1) \, | \, g^*[H'] = [H']\}$$
where the latter group is a finite extension of the identity connected component $\Aut_0(X_1)$
of $\Aut(X_1)$,
by Lieberman \cite[Proposition 2.2]{Li} or \cite[Lemma 2.23]{JDG}.
Since the continuous group $\Aut_0(X_1)$ acts on the integral lattice
$\NS(X_1)/(\torsion)$ as identity,
the representation $K_{| \, \NS_{\R}(X_1)}$ is a finite subgroup of
$G_{| \, \NS_{\R}(X_1)}$.

If $X_1$ and $G$ satisfy Hyp(A), then $G_{| \, \NS_{\R}(X_1)}$ is isomorphic
to $\Z^{\oplus n-1}$ and its only finite subgroup is $\{\id\}$.
Thus $K_{| \, \NS_{\R}(X_1)} = \{\id\}$.
So we have the isomorphisms
$$\Z^{\oplus n-1} \cong G_{| \, \NS_{\R}(X_1)} \overset{r}\cong G_{| \, \sigma^*\NS_{\R}(X_2)}
\cong G_{| \, \NS_{\R}(X_2)}.$$
Hence $G$ and $X_2$ satisfy Hyp(A) and we have proved
(1) for the first situation,
noting that $g \in G$ acts on $X_1$ with positive entropy if and only if the same holds on $X_2$.
See \cite[Lemma 2.6]{JDG}.
In the second situation of (1), we can identify $\NS_{\R}(X_1)$ and $\NS_{\R}(X_2)$
so the result is clear.

For (2), let $W \subset X_1 \times X_2$ be the graph of $\sigma$, so $G$ acts on $W$ biregularly.
Now the two natural projections $p_i : W \to X_i$ are
both $G$-equivariant generically finite morphisms.
By (1), it suffices to prove the assertion that $W$ and $G$
satisfy Hyp(A), after replacing $G$ by a finite-index subgroup.
By the argument in (1), we have an exact sequence
$$1 \to K_{| \, \NS_{\R}(W)} \to G_{| \, \NS_{\R}(W)} \overset{r}\to G_{| \, p_a^*\NS_{\R}(X_a)} \to 1.$$
Thus we have isomorphisms
$$G_{| \, \NS_{\R}(W)}/K_{| \, \NS_{\R}(W)} \cong G_{| \, p_a^*\NS_{\R}(X_a)} \cong
G_{| \, \NS_{\R}(X_a)} \cong \Z^{\oplus n-1}.$$
This and the finiteness of $K_{| \, \NS_{\R}(W)}$ as shown in (1),
imply that $G_{| \, \NS_{\R}(W)} \cong \Z^{\oplus n-1}$, after replacing $G$
by a finite-index subgroup; see \cite[Lemma 2.4]{max}.
This proves the required assertion and also (2).
\end{proof}

The following clean proof of the Hodge index theorem for singular varieties is due to N. Nakayama.
Another proof in \cite[Lemma 2.5]{max} works only when $M$ is nef.

\begin{lemma}\label{HR}
Let $X$ be a projective variety of dimension $n \ge 2$.
Let $H_1, \cdots, H_{n-1}$ be ample $\R$-divisors
and $M$ an $\R$-Cartier divisor.
Suppose that
$H_1 \cdots H_{n-1} \cdot M = 0 = H_1 \cdots H_{n-2} \cdot M^2$.
Then $M \equiv 0$ (numerical equivalence).
\end{lemma}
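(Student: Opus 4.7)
The plan is to prove the lemma by induction on $n$, reducing to the classical Hodge index theorem on a smooth surface in the base case and using a mixed Hodge--Riemann bilinear form in the inductive step. For $n = 2$, pull back to a resolution $\pi : \tilde X \to X$; the class $\pi^* H_1$ is nef and big with $(\pi^* H_1)^2 = H_1^2 > 0$, and the hypotheses $\pi^* H_1 \cdot \pi^* M = 0$ and $(\pi^* M)^2 = 0$ transfer unchanged. The classical Hodge index on the smooth projective surface $\tilde X$ (which applies with ``ample'' replaced by ``nef and big'') then yields $\pi^* M \equiv 0$, hence $M \equiv 0$.

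For the inductive step $n \ge 3$, the proof centres on the symmetric bilinear form $Q(\alpha, \beta) := H_1 \cdots H_{n-2} \cdot \alpha \cdot \beta$ on $\NS_{\R}(X)$, for which the hypotheses read $Q(H_{n-1}, M) = 0$ and $Q(M, M) = 0$, while $Q(H_{n-1}, H_{n-1}) > 0$. If one knows the \emph{mixed Hodge index theorem} --- that $Q$ has Lorentzian signature with its unique positive direction spanned by the ample class $H_{n-1}$ --- then standard linear algebra for Lorentzian forms puts $M$ into the kernel of $Q$; that is, $H_1 \cdots H_{n-2} \cdot M \cdot N = 0$ for every class $N$, so the $1$-cycle $H_1 \cdots H_{n-2} \cdot M$ is numerically trivial. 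A (mixed) Hard Lefschetz argument then says that intersecting with a product of ample classes is injective from $\NS_{\R}(X)$ into the space of numerical $1$-cycles (after passing to a smooth resolution if $X$ is singular), whence $M \equiv 0$.

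The mixed Hodge index for $Q$ I would prove by a secondary induction on $n$: for $n = 2$ it is the classical Hodge index, and for $n \ge 3$ I would approximate $H_{n-2}$ by a rational ample class $H_{n-2}'$, take a generic smooth hyperplane section $Y$ cut out by a sufficiently divisible $|m H_{n-2}'|$, and invoke the inductive assertion for $Q_Y(\alpha, \beta) := H_1|_Y \cdots H_{n-3}|_Y \cdot \alpha \cdot \beta$ on $\NS_{\R}(Y)$; the Lefschetz hyperplane theorem furnishes injectivity of the restriction $\NS_{\R}(X) \to \NS_{\R}(Y)$ for $n \ge 3$, and Lorentzian signature persists in the limit $H_{n-2}' \to H_{n-2}$ by upper semi-continuity of the number of positive eigenvalues of a continuous family of symmetric forms. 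The hardest part will be handling the $\R$-divisor approximation cleanly --- since ample $\R$-divisors have no linear systems of their own --- together with the singular geometry of $X$ and the passage to a resolution.
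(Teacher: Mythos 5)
Your overall architecture---induction on the dimension, the Lorentzian signature of the form $Q(\alpha,\beta) = H_1\cdots H_{n-2}\cdot\alpha\cdot\beta$, $\Q$-approximation of the ample $\R$-classes with a limiting argument, and reduction to hyperplane sections---is essentially the one the paper uses, and the base case $n=2$ and the linear-algebra step ``one positive eigenvalue, hence $M\in\ker Q$'' are both fine. The genuine gap is the very last step: deducing $M\equiv 0$ from $M\in\ker Q$, i.e.\ the \emph{non-degeneracy} of $Q$ on $\NS_{\R}(X)$. You invoke ``a (mixed) Hard Lefschetz argument \ldots after passing to a smooth resolution if $X$ is singular.'' But on a resolution $\pi:\tilde X\to X$ the classes $\pi^*H_i$ are only nef and big, and mixed Hard Lefschetz fails for nef and big classes: already for the blow-up $\pi$ of a point on a smooth threefold one has $\pi^*H\cdot E=0$ in $H^{2,2}$ for the exceptional divisor $E$, so cupping with $\pi^*H$ is not injective on $H^{1,1}(\tilde X)$. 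Restricting attention to the subspace $\pi^*\NS_{\R}(X)$ does not help, because by the projection formula the injectivity there is literally equivalent to the non-degeneracy of $Q$ on $X$ that you are trying to prove---the appeal is circular. (For smooth $X$ with ample $H_i$ the mixed Hodge--Riemann relations do give what you want, but the lemma is applied in this paper to singular klt varieties, so that case cannot be discarded.)

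The paper closes exactly this gap by a direct geometric induction in place of Hard Lefschetz: assuming the statement in dimension $n-1$, for each very ample prime divisor $P$ either $L|_P\equiv 0$ or $H_1\cdots H_{n-3}\cdot P\cdot L^2<0$; writing $H_{n-2}\equiv\sum p_iP_i$ with $p_i>0$ shows the second alternative cannot hold for every $P$ (it would force $I(L,L)<0$), so some very ample $P'$ has $L|_{P'}\equiv 0$, and then every curve $C$ lies on some member $P''\in|mP'|$ with $L|_{P''}\equiv 0$, whence $L\cdot C=0$ and $L\equiv 0$. You would need to supply an argument of this kind (or some other proof of non-degeneracy valid for singular $X$) to make your inductive step complete; as written, the key point of the lemma is asserted rather than proved.
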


\begin{proof}
Let $\Sigma := \{L \in \NS_{\R}(X) \, | \, H_1 \cdots H_{n-1} \cdot L = 0\}$.
Lemma \ref{HR} follows from Property (HI): higher-dimensional
Hodge index theorem holds for $\R$-Cartier divisors, i.e.,
the quadratic form $I(L_1, L_2) := (H_1 \cdots H_{n-2} \cdot L_1 \cdot L_2)$ is
negative definite on $\Sigma$. Indeed, Property (HI) holds when $X$ is smooth, or
when the $H_i$ are all $\Q$-divisors by cutting $X$ by general hypersurfaces
in multiples of $H_i$ and reducing to the surface case
so that the usual Hodge index theorem can be applied.

Now we consider the general case where the $H_i$ are $\R$-divisors.

\begin{claim}\label{sNeg}
Let $P_1, \dots, P_{n-1}$ be ample $\R$-divisors and
$N$ an $\R$-Cartier divisor such that
$P_{1} \cdots P_{n-1} \cdot N = 0$. Then $P_{1} \cdots P_{n - 2} \cdot N^2 \leq 0$.
\end{claim}

We prove Claim \ref{sNeg}.
Take ample $\Q$-divisors
$P_{i, m}$ such that $P_{i} = \lim_{m \to \infty} P_{i, m}$.
There is a unique real number $r(m)$ such that
$$P_{1, m} \cdots \cdot P_{n-1, m} \cdot (N + r(m)P_{n-1}) = 0$$
since $P_{1, m} \cdots P_{n-1, m} \cdot P_{n-1} > 0$.
By the assumption on $N$, $\lim_{m \to \infty} r(m) = 0$.
Since the Hodge index theorem holds for $\Q$-divisors $P_{i, m}$
as mentioned early on, we have
$$P_{1, m} \cdots P_{n-2, m}(N + r(m)P_{n-1})^{2} \leq 0 .$$
Letting $m \to \infty$, we have
$P_{1} \cdots P_{n-2} \cdot N^{2} \leq 0$.
This proves Claim \ref{sNeg}.

\par \vskip 1pc
We continue the proof of Lemma \ref{HR}.
For $\R$-Cartier divisors $L_{1}$, $L_{2}$,
let $I(L_{1}, L_{2})$ be the intersection number $(H_{1} \cdots H_{n - 2} \cdot L_{1} \cdot L_{2})$.
Then, $I( , )$ defines a bilinear form on the real vector space
$\NS_{\R}(X)$.

We claim that the symmetric matrix corresponding to
$I(, )$ has exactly one positive eigenvalue.
Indeed, it has a positive eigenvalue, since $I(P, P) > 0$ for an ample divisor $P$.
If $L$ is an $\R$-Carier divisor such that $I(P, L) = 0$.
Then, $I(L, L) \leq 0$ by Claim \ref{sNeg}.
Hence, the other eigenvalues are non-positive, and the claim is proved.

We now prove Property (HI) by induction on $n$.
We may assume that $n \ge 3$.
By the arguments so far, it remains to show the assertion that $I(, )$ is non-degenerate.
Assume that Property (HI) holds in dimension $n-1$; especially the assertion holds for $n - 1$.
Let $L$ be an $\R$-Cartier divisor on $X$ such that
$I(L, N) = 0$ for every $\R$-Cartier divisor $N$.
We need to prove that $L \equiv 0$ (numerical equivalence).

Let $P$ be a very ample prime divisor.
Then,
$${H_1}_{\, |P} \cdots {H_{n-2}}_{\, |P} \cdot L_{\, |P} = I(L, P) = 0 .$$
By induction, Property (HI) holds in dimension $n-1$. Hence, either

\begin{itemize}
\item[(a)]
$L_{\, | P} \equiv 0$, or
\item[(b)] 
${H_1}_{\, |P} \cdots {H_{n-3}}_{\, |P} \cdot (L_{\, |P})^{2} < 0$, i.e.,
$H_{1} \cdots H_{n-3} \cdot P \cdot L^{2} < 0$.
\end{itemize}

Suppose that (b) holds for every very ample prime divisor $P$.
Since $H_{n -2}$ is an ample divisor, we have numerical equivalence
$H_{n - 2} \equiv \sum p_{i} P_{i}$ for some $p_{i} > 0$ and very ample prime divisors $P_{i}$.
Since (b) holds for every $P_i$,
we have $H_{1} \cdots H_{n-2} \cdot L^{2} < 0$.
This contradicts $I(L, L) = 0$.
Thus some very ample prime divisor $P'$ does not satisfy (b).

Let $C$ be an arbitrary curve on $X$.
Then, we can find a very ample prime divisor $P''$ in $|mP'|$
for some $m > 0$ such that $P''$ contains $C$.
Since $P''$ ($\sim mP'$) does not satisfy (b), it satisfies (a),
i.e., $L_{\, |P''} \equiv 0$.
Thus, $L \cdot C = L_{\, | P''} \cdot C = 0$. So $L \equiv 0$.
Therefore, $I(,)$ is non-degenerate. We have proved the assertion.
Hence Property (HI) holds for any $n \ge 2$. This also proves Lemma \ref{HR}.
\end{proof}

Let $G \le \Aut(X)$. We define the subset of {\it null-entropy elements} of $G$ as
$$N(G) := \{g \in G \, | \, \text{\rm the entropy} \, h(g) = 0\} .$$
$N(G)$ may not be a subgroup of $G$.

We quote the following result of \cite{DS04}.

\begin{lemma}\label{DS} (\cite{DS04})
Let $X$ be a normal projective variety
and $G \le \Aut(X)$. Suppose that
$G^* := G_{\, | \NS_{\R}(X)} \cong \Z^{\oplus r}$
and every non-trivial element of $G^*$ is of positive entropy. Then
there are nef $\R$-Cartier divisors $L_i$ ($1 \le i \le r+1$)
such that each $L_i$ is a common eigenvector of $G$ with
$$g^*L_i = \chi_i(g) L_i \,\,\, (g \in G)$$
for some $\chi_i(g) \in \R_{> 0}$, that
$$L_1 \dots L_{r+1} \ne 0$$
as an element of $H^{r+1, r+1}(X)$,
and that the homomorphism
$$\begin{aligned}
\varphi: G \, &\to \, (\R^r, +) \\
g \, &\mapsto \, (\log \chi_1(g), \dots, \log \chi_r(g))
\end{aligned}
$$
has the kernel equal to $N(G)$ and hence it
induces an isomorphism from $G/N(G)$ ($\cong G^*$) onto a spanning lattice
of $(\R^r, +)$.
\end{lemma}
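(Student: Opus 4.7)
The plan is to recapitulate the Dinh--Sibony strategy, exploiting the $G$-action on the nef cone $\Nef(X) \subset \NS_{\R}(X)$. First, I would observe that each non-trivial $g \in G^*$ has positive entropy and hence, by definition, a nef eigenvector with eigenvalue $\rho(g) > 1$. Since $G^*$ is abelian and every element preserves the convex cone $\Nef(X)$, a Birkhoff--Perron--Frobenius argument on convex cones, applied iteratively to commuting operators, produces a finite family of \emph{common} nef eigenvectors on which $G$ acts by positive scalar characters $\chi : G \to \R_{>0}$. I would collect all such characters into $\R^r = \Hom(G^*, \R)$ via $g \mapsto \log \chi(g)$.

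Next, I would show that these log-characters span $\R^r$: otherwise, they would all lie in a proper subspace, so a non-trivial $g \in G^*$ would pair trivially with every one of them, forcing $\chi(g) = 1$ on every common nef eigenvector. A spectral-radius argument (using that common nef eigenvectors control $\rho(g^*)$ on $\Nef(X)$, together with Lemma~\ref{HR} to rule out parabolic contributions from off-cone directions) would then give $h(g) = 0$, contradicting the positive-entropy hypothesis. Consequently, among the finitely many characters produced by the Perron--Frobenius step we can select $L_1, \ldots, L_{r+1}$ whose associated log-character vectors lie in sufficiently general position in $\R^r$ (any $r$ of them linearly independent).

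The main obstacle, and the technical heart of the proof, is showing $L_1 \cdots L_{r+1} \neq 0$ in $H^{r+1, r+1}(X)$. I would argue by induction on cohomological degree: successive intersection with nef classes produces pseudo-effective cycles, and if the top intersection vanished, then $L_{r+1}$ would be numerically trivial on the $(n-r-1)$-cycle cut out by $L_1 \cdots L_r$. Combined with Lemma~\ref{HR}, generalized in the mixed form to intersections with several ample perturbations, this would force a non-trivial linear relation among the characters $\log \chi_i$, contradicting the general-position choice. The Dinh--Sibony paper handles this via a Khovanskii--Teissier-type mixed Hodge index argument; transferred to the singular setting via Lemma~\ref{HR}, the same inductive scheme applies.

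Finally, to identify $\ker \varphi$ with $N(G)$, I would argue both inclusions. If $g \in N(G)$, then every eigenvalue of $g^*$ on $\NS_{\C}(X)$ has modulus $1$, so in particular the positive real scalars $\chi_i(g)$ all equal $1$ and $\varphi(g) = 0$. Conversely, if $\varphi(g) = 0$, then $g^*$ fixes each $L_i$; since the $L_1, \ldots, L_{r+1}$ are linearly independent in $\NS_{\R}(X)$ (forced by $L_1 \cdots L_{r+1} \neq 0$) and control the spectral radius through the nef-cone bound, the operator $g^*$ must have all eigenvalues of modulus $1$, hence $g \in N(G)$. The resulting injection $G^*/\varphi^{-1}(0)|_{G^*} \cong G^*/N(G)|_{G^*} \hookrightarrow (\R^r, +)$ has discrete, spanning image, so it lands on a lattice of rank $r$, completing the statement.
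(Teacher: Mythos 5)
The paper does not reprove the Dinh--Sibony theorem: its ``proof'' of Lemma~\ref{DS} is a citation of \cite[Theorems 4.3 and 4.7]{DS04} (transposed from $H^{1,1}$ to $\NS_{\R}$), and the only thing it actually argues is that the kernel $K$ of $G \to \GL(\NS_{\R}(X))$ equals $N(G)$, via Lieberman's theorem ($K \le \Aut_{[H]}(X)$, which is a finite extension of $\Aut_0(X)$, and $\Aut_0(X)$ acts trivially on $\NS(X)/(\torsion)$ and on cohomology). Your proposal instead tries to rederive the whole of \cite{DS04} from scratch, and while it names the right ingredients (Perron--Frobenius on the nef cone, Khovanskii--Teissier/mixed Hodge index), the two steps that carry all the weight are left as gestures. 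First, you repeatedly use that the common nef eigenvectors ``control'' $\rho(g^*)$, i.e.\ essentially that $\log\rho(g^*)=\max_i \log\chi_i(g)$; this is precisely the nontrivial content of \cite[Theorem 4.3]{DS04} and is never established --- iterated Perron--Frobenius on subcones gives a common eigenvector whose $g_1$-eigenvalue is $\rho(g_1^*)$ but whose eigenvalues for the other generators need not realize their spectral radii. Second, your spanning argument has a genuine hole: if the finitely many log-characters lay in a proper subspace of $\Hom(G^*,\R)$, that subspace need not be rational, so there need not exist a \emph{non-trivial lattice element} $g\in G^*\cong\Z^{\oplus r}$ annihilating all of them; one needs an approximation or the actual Dinh--Sibony convexity argument.

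The non-vanishing $L_1\cdots L_{r+1}\ne 0$ is also not obtained by the scheme you describe: Lemma~\ref{HR} requires $n-1$ \emph{ample} divisors in the frame, whereas your $L_i$ are only nef, and the claimed ``non-trivial linear relation among the $\log\chi_i$'' contradicting general position is asserted rather than derived. Finally, for the kernel identification you prove $N(G)\subseteq\ker\varphi$ correctly, but the converse again rests on the unproved spectral-radius control, and the parenthetical isomorphism $G/N(G)\cong G^*$ requires knowing that null-entropy elements act as the identity on \emph{all} of $H^*$ of a resolution, not merely with modulus-one eigenvalues on $\NS_{\R}(X)$ --- this is exactly where the paper invokes Lieberman's theorem, and your sketch omits it. In short: as a self-contained proof the proposal has concrete gaps at every hard step; as a reading of the paper, it misses that the intended proof is a citation plus the Lieberman argument.
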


\begin{proof}
This is proved in \cite[Theorems 4.3 and 4.7]{DS04}
by considering the action of $G$ on $\NS_{\R}(X)$
instead of that on $H^{1,1}(X)$. See also \cite[Theorems 1.1 and 1.2]{Z-Tits}.

For the isomorphism $G^* \cong G/N(G)$, we just need to check the assertion that
the natural representation 
$$G \to \GL(\NS_{\R}(X)), \hskip 1pc g \mapsto {g^*}_{\, | \NS_{\R}(X)}$$
has kernel $K$ equal to $N(G)$
(and image equal to $G^*$ by definition). Indeed, the kernel
$K \le \Aut_{[H]}(X)$
for any ample divisor class $[H]$,
and $\Aut_{[H]}(X)$ is a finite extension of the identity connected component $\Aut_0(X)$
of $\Aut(X)$ by \cite[Proposition 2.2]{Li}. So $K$ is virtually contained in $\Aut_0(X)$
and the latter continuous group acts on the integral lattice $\NS(X)/(\torsion)$ as identity.
Thus $K \le N(G)$. Conversely, since every $n \in N(G)$ or its action $n^*$ on $\NS_{\R}(X)$
is of null entropy, we have $n^* = \id$ on $\NS_{\R}(X)$ by the assumption on $G^*$. Hence $n \in K$.
So $N(G) \le K$.
We have proved the required assertion $K = N(G)$.
\end{proof}

\begin{setup}\label{bigA}
Applying Lemma \ref{DS} to our $G^* = G_{\, | \NS_{\R}(X)} \cong \Z^{\oplus n-1}$ in Hyp(A),
with $n = \dim X$,
we get nef $\R$-Cartier divisors $L_i$ ($1 \le i \le n$), which are common eigenvectors
of $G$.
Set $$A := L_1 + \cdots + L_n.$$
Then $(L_1 + \dots + L_n)^n \ge L_1 \cdots L_n$ and the latter is nonzero
in $H^{n, n}(X, \R) = \R$ by Lemma \ref{DS} and indeed positive, $L_i$ being nef.
Thus $A$ is a nef and big divisor.

Since $L_1 \cdots L_n$ is a scalar, for every $g \in G$, we have
$$L_1 \cdots L_n = g^*(L_1 \cdots L_n) = (\chi_1(g) \cdots \chi_n(g))(L_1 \cdots L_n) .$$
Hence
$$\chi_1 \cdots \chi_n = 1.$$
\end{setup}

\begin{lemma}\label{nefbig}
Suppose that $X$ and $G$ satisfy Hyp(A).
Then we have:
\begin{itemize}
\item[(1)]
The $A = L_1 + \cdots + L_n$ in \ref{bigA} is a nef and big $\R$-Cartier divisor.
\item[(2)]
For integer $k >> 1$, we have $A = A_k + E/k$
such that $A_k$ is an ample $\Q$-Cartier divisor and $E$ is a fixed effective $\R$-Cartier divisor.
\item[(3)]
Suppose further that $(X, D)$ is a klt pair with $D$ an effective $\R$-Cartier divisor.
Then, replacing $A_k$ by some $A_k'$ with $A_k' \sim_{\Q} A_k$ and $A$ by $A' := A_k' + E/k$
with $A' \equiv A$ (numerical equivalence),
we may assume that $A$ is an effective divisor and the pair $(X, D + A)$ has at worst klt singularities.
\end{itemize}
\end{lemma}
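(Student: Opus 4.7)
For part (1), the nefness of $A = L_1 + \cdots + L_n$ is immediate since it is a nonnegative $\R$-linear combination of nef divisors. For bigness, I would invoke Lemma \ref{DS}: since $L_1 \cdots L_n$ is nonzero in $H^{n,n}(X,\R) = \R$ and is an intersection of nef classes, it is strictly positive. Expanding $A^n$ and retaining only the mixed term $n!\, L_1 \cdots L_n$ (all other summands being nonnegative intersections of nef divisors), one obtains $A^n \geq n!\, L_1 \cdots L_n > 0$; for a nef $\R$-divisor this is equivalent to bigness.

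For part (2), my plan is to invoke Kodaira's lemma for big $\R$-Cartier divisors applied to $A$: there exist an ample $\R$-Cartier divisor $H$ and an effective $\R$-Cartier divisor $E$ with $A \equiv H + E$ (numerical equivalence). I take this $E$ as the fixed effective divisor of the lemma. Then for every $k > 1$ and every nonzero $\alpha \in \NE(X)$,
$$(A - E/k) \cdot \alpha = (1 - 1/k)(A \cdot \alpha) + (1/k)(H \cdot \alpha) > 0,$$
using that $A$ is nef (so $A \cdot \alpha \geq 0$) and $H$ is ample (so $H \cdot \alpha > 0$). By Kleiman's criterion, $A - E/k$ is ample $\R$-Cartier; density of $\Q$-Cartier divisors in the open ample cone of $\NS_{\R}(X)$ then yields an ample $\Q$-Cartier $A_k$ with $A_k \equiv A - E/k$, giving the required numerical equivalence $A \equiv A_k + E/k$.

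For part (3), my plan is a standard perturbation argument. Because $(X, D)$ is klt and $E/k$ has coefficients of size $1/k$, for $k \gg 1$ the pair $(X, D + E/k)$ remains klt (discrepancies vary continuously with the boundary). Choose $m$ sufficiently divisible so that $mA_k$ is very ample Cartier, and let $A_k' := F/m$ for a sufficiently general $F \in |mA_k|$; then $A_k' \sim_{\Q} A_k$ is an effective $\Q$-Cartier divisor, and a standard Bertini argument on a log-resolution of $(X, D + E/k)$ shows that $(X, D + E/k + A_k')$ remains klt. Setting $A' := A_k' + E/k$ gives an effective $\R$-Cartier divisor with $A' \equiv A$ (since $A_k' \equiv A_k$) and $(X, D + A')$ klt, as required. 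The main subtle point is the interpretation of $A = A_k + E/k$ in (2) as numerical equivalence rather than literal equality, since $A$ is a priori only $\R$-Cartier while $A_k$ is required $\Q$-Cartier; once this is granted, (3) follows essentially formally from the standard MMP fact that adjoining a general member of a sufficiently divisible ample Cartier linear system preserves klt singularities.
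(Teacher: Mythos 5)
Your proposal is correct and follows essentially the same route as the paper: (1) is the computation $A^n \ge L_1\cdots L_n > 0$ already made in \ref{bigA}, (2) is Kodaira's lemma for the nef and big divisor $A$ (the paper cites \cite[Proposition 2.61]{KM}, and your explicit convexity argument $A - E/k \equiv (1-1/k)A + H/k$ is the standard proof of it), and (3) is the same perturbation via openness of klt plus replacing $A_k$ by $\frac{1}{m}B$ for a general $B \in |mA_k|$, exactly as in \cite[Corollary 2.35]{KM}. Your remark that the equality $A = A_k + E/k$ should be read numerically when $A$ is genuinely $\R$-Cartier is a fair observation and is consistent with how the paper uses the decomposition in part (3), where only $A' \equiv A$ is claimed.
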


\begin{proof}
(1) has been proved in \ref{bigA}.
(2) is a consequence of (1)
and proved in \cite[Proposition 2.61]{KM} or \cite[II, Theorem 3.18]{ZDA} (for $\R$-divisors).
(3) is true, because $(X, D)$ is klt
and klt is an open condition. Indeed, we just take $k >>1$ and replace
$A_k$ by $A_k' := \frac{1}{m} B$ with $B$ a general member of $|mA_k|$ for some $m >> 1$.
See \cite[Corollary 2.35 (2)]{KM}.
\end{proof}

\begin{lemma}\label{per}
Assume that $X$ and $G$ satisfy Hyp(A). For the $A$ in \ref{bigA}, we have:
\begin{itemize}
\item[(1)]
Suppose that $Z \in H^{n-k, n-k}(X)$ is a $G$-periodic
class for some $n-k$ in $\{1, \dots, n-1\}$.
Then $A^k \cdot Z = 0$.
\item[(2)]
We have $A^{n-1} \cdot c_1(X) = 0$; and $A^{n-2} \cdot c_1(X)^2 = A^{n-2} \cdot c_2(X) = 0$ when $n \ge 3$.
Here
$c_i(X)$ denotes $i$-th Chern class; so $c_1(X) = [-K_X]$,
and $c_2(X)$ is regarded as a linear form on $n-2$ copies of $\NS_{\R}(X)$
as defined in \cite[p. 265]{SW}.
\item[(3)]
Suppose that $Z \subset X$ is a $G$-periodic positive-dimensional proper subvariety of $X$.
Then $A^{\dim Z} \cdot Z = 0$.
\end{itemize}
\end{lemma}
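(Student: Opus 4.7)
The plan is to prove (1) as the main technical step via the character decomposition of the $G$-action on the nef eigenvectors $L_1,\dots,L_n$ from \ref{bigA}, and then derive (2) and (3) as essentially immediate consequences.

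For (1), I first replace $G$ by the finite-index subgroup $G_1$ fixing the class $[Z]$ (this exists since $[Z]$ is $G$-periodic, i.e., its $G$-orbit is finite). Expanding
$$A^k = (L_1+\cdots+L_n)^k$$
as a non-negative integer combination of monomials $L^{\mathbf m} := L_1^{m_1}\cdots L_n^{m_n}$ with $|\mathbf m| = \sum_i m_i = k$, each monomial is a common eigenvector of $G$ with character $\chi^{\mathbf m} := \prod_i \chi_i^{m_i}$. For $g \in G_1$, invariance of intersection numbers and of $[Z]$ yields
$$L^{\mathbf m}\cdot[Z] = g^*\bigl(L^{\mathbf m}\cdot[Z]\bigr) = \chi^{\mathbf m}(g)\,L^{\mathbf m}\cdot[Z],$$
so $L^{\mathbf m}\cdot[Z] = 0$ whenever $\chi^{\mathbf m}\not\equiv 1$ on $G_1$.

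The crux of the argument, and the only real obstacle, is to show that $\chi^{\mathbf m}$ is non-trivial on $G_1$ for every $\mathbf m$ with $1 \le |\mathbf m| = k \le n-1$. Since $k < n$, some coordinate $m_j$ vanishes. Using the relation $\chi_1\cdots\chi_n = 1$ recorded in \ref{bigA} to eliminate $\log\chi_n$, I rewrite
$$\log\chi^{\mathbf m} = \sum_{i=1}^{n-1}(m_i - m_n)\log\chi_i.$$
By Lemma \ref{DS}, the functionals $\log\chi_1,\dots,\log\chi_{n-1}$ on $G$ are $\R$-linearly independent (they induce an isomorphism of $G/N(G)$ onto a spanning lattice of $\R^{n-1}$), and this independence persists on the finite-index subgroup $G_1$. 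So if $\log\chi^{\mathbf m}\equiv 0$ on $G_1$, then $m_i = m_n$ for all $i < n$; combined with $m_j = 0$ for some $j$, this forces $m_n = 0$ and hence all $m_i = 0$, contradicting $k \ge 1$. Thus each monomial intersection vanishes, and summing gives $A^k\cdot[Z] = 0$.

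Parts (2) and (3) are then formal consequences of (1). For (2), the classes $c_1(X) = [-K_X]$ and $c_2(X)$ are $G$-invariant because every $g \in G$ preserves $K_X$ and the tangent sheaf, hence all Chern classes of $X$; applying (1) with $k = n-1$ for $c_1$ and $k = n-2$ for $c_1^2$ and $c_2$ yields the three vanishings, where the hypothesis $n \ge 3$ ensures $k \ge 1$ in the latter two. For (3), the class $[Z]$ is $G$-periodic of codimension $n - \dim Z \in \{1,\dots,n-1\}$, so (1) with $k = \dim Z$ immediately gives $A^{\dim Z}\cdot Z = 0$.
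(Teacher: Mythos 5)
Your proof is correct and follows essentially the same route as the paper: expand $A^k$ into monomials in the eigenvectors $L_i$, use $G$-invariance of the scalar $L^{\mathbf m}\cdot Z$ together with the spanning-lattice property of $\varphi(G)$ and the relation $\chi_1\cdots\chi_n=1$ to force each monomial intersection to vanish, then read off (2) and (3) from the $G$-invariance of the Chern classes and of $[Z]$. The only cosmetic difference is that the paper picks $g$ with $\chi_{i_j}(g)>1$ for all indices occurring, whereas you only argue that the character $\chi^{\mathbf m}$ is non-trivial on the finite-index subgroup, which is the same mechanism.
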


\begin{proof}
(2) and (3) are consequences of (1), noting that $g^*c_i(X) = c_i(X)$.

(1) was proved in \cite[Lemma 2.6]{max}.
To be precise, replacing $G$ by a finite-index subgroup, we may assume that $g^*Z = Z$ ($g \in G$).
Note that for all $i_j$, we have
\begin{equation}\label{(*)}
L_{i_1} \cdots L_{i_k} \cdot Z = 0 .
\end{equation}
Indeed,
since $\varphi(G) \subset \R^{n-1}$ is a spanning lattice, $k \le n-1$, and $\chi_1 \cdots \chi_n
= 1$, we can choose $g \in G$ such that
$\chi_{i_j}(g) > 1$ for all $i_j$.
Acting on the left hand side of the equality (\ref{(*)}) (a scalar) with $g^*$ and noting that $g^*Z = Z$,
we conclude the equality (\ref{(*)}). This, in turn, implies that $A^k \cdot Z = 0$,
since $A = \sum L_i$ and hence $A^k$ is the sum of $L_{i_1} \cdots L_{i_k}$.
\end{proof}

The following result was first proved by
Nakamaye for $\Q$-divisors, and generalized to the following for $\R$-divisors
by Ein - Lazarsfeld - Mustata - Nakamaye - Popa (see e.g. \cite[Example 1.11]{ELMNP}).
For a nef $\R$-Cartier divisor $L$ on a projective variety $X$, let
$$\Null(L) = \bigcup_{L_{\, |Z} \, \text{\rm not big}} \, Z$$
where $Z$ runs over all positive-dimensional subvarieties of $X$.
Since $L_{\, |Z}$ is nef, it is not big if and only if $L^{\dim Z} \cdot Z = 0$.

\begin{lemma}\label{ELMNP} (cf.~\cite{ELMNP})
Let $X$ be a normal projective variety and $L$ a nonzero nef $\R$-Cartier divisor.
Then $\Null(L)$ is a Zariski-closed proper subset of $X$.
(Indeed, $\Null(L)$ is equal to the augmented base locus $B_{+}(L)$
which we will not use in the sequel).
\end{lemma}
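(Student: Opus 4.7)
The plan is to deduce Lemma \ref{ELMNP} from Nakamaye's theorem, which asserts that for a nef and big $\R$-Cartier divisor $L$ on a normal projective variety $X$, one has
$$\Null(L) = B_{+}(L),$$
where $B_{+}(L)$ is the augmented base locus. Once this identification is established, the two required properties are automatic from the definition of $B_{+}(L)$: if one fixes an ample divisor $H$ and takes $B_{+}(L)$ to be the stable base locus of $L - \epsilon H$ for all sufficiently small rational $\epsilon > 0$ (independent of $\epsilon$ and $H$), then $B_{+}(L)$ is manifestly Zariski-closed, and it is a proper subset of $X$ because bigness of $L$ makes $L - \epsilon H$ big for small $\epsilon$, so $m(L - \epsilon H)$ has nontrivial global sections for $m \gg 0$. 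Note that the applications in \S\ref{main} invoke this lemma for the nef and big divisor $A$ of \ref{bigA}, so reducing to the nef-and-big case is harmless.

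First I would dispose of the reverse inclusion $B_{+}(L) \subseteq \Null(L)$, which is the elementary direction. If $Z \not\subseteq \Null(L)$, then $L_{\,|Z}$ is nef and big on $Z$, so by Kodaira's lemma (applied on $Z$) one can write $L_{\,|Z} \sim_{\Q} (\text{ample}) + (\text{effective})$; lifting this decomposition suitably to $X$, after adding a small ample, produces decompositions $L = A' + E$ with $A'$ ample and $Z \not\subseteq \Supp(E)$, forcing $Z \not\subseteq B_{+}(L)$.

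The harder inclusion $\Null(L) \subseteq B_{+}(L)$ is the content of Nakamaye's original argument. Given $Z$ with $L_{\,|Z}$ not big, I would show $Z \subseteq B_{+}(L)$ by using asymptotic multiplier ideal sheaves $\mathcal{J}(\|mL\|)$ together with Kawamata-Viehweg vanishing, following the strategy of Ein-Lazarsfeld-Mustata-Nakamaye-Popa. The idea is that non-bigness of $L_{\,|Z}$ forces the cosupport of the multiplier ideals to cover $Z$, hence $Z$ must lie in the stable base locus of $L - \epsilon H$ for every small $\epsilon$.

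The main obstacle is extending Nakamaye's $\Q$-divisor argument to $\R$-divisors. I would proceed by approximating $L$ in $\NS_{\R}(X)$ by $\Q$-Cartier divisors $L_k = L - \epsilon_k H + F_k$ (with $F_k$ a small perturbation and $H$ ample) that remain nef and big, check that $\Null(L_k)$ stabilizes to $\Null(L)$ for $k \gg 0$ by a convex-geometric argument (the collection of subvarieties on which a nef divisor fails to be big is lower-semicontinuous under small nef perturbations), and then transfer the equality $\Null(L_k) = B_{+}(L_k)$ from the rational case. The delicate point is controlling how $B_{+}$ varies under $\R$-perturbation, which is handled by the characterization of $B_{+}$ as a stable base locus of a rational perturbation and an elementary openness argument.
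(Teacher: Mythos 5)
The paper does not actually prove this lemma: it is quoted from Ein--Lazarsfeld--Mustata--Nakamaye--Popa \cite{ELMNP} (Nakamaye's theorem on the null locus, in its $\R$-divisor form), so your decision to reprove it from scratch already departs from the paper, which simply cites the reference. Your reduction to the nef-and-big case is legitimate, since for a nef non-big $L$ one would have $X \in \Null(L)$, and the ``proper subset'' claim is only ever applied to the nef and big divisor $A$ of \ref{bigA}.

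The genuine gap is that you have interchanged the easy and the hard inclusions of Nakamaye's theorem. The inclusion $\Null(L) \subseteq B_{+}(L)$, which you call ``the harder inclusion \dots the content of Nakamaye's original argument'' and propose to attack with asymptotic multiplier ideals, is in fact the elementary direction: if $Z \not\subseteq B_{+}(L)$, then $mL \sim m\epsilon H + E$ for some ample $H$, some effective $E$ with $Z \not\subseteq \Supp E$, and some $m \gg 0$, whence $(mL)^{\dim Z}\cdot Z \ge (m\epsilon H)^{\dim Z}\cdot Z > 0$ by nefness of $L$ and effectivity of $E_{|Z}$, so $L_{|Z}$ is big. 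Conversely, the inclusion $B_{+}(L) \subseteq \Null(L)$, which you label elementary, is the deep content of the theorem, and your sketch buries the entire difficulty in the phrase ``lifting this decomposition suitably to $X$'': bigness of $L_{|Z}$ gives, via Kodaira's lemma, a decomposition of $L_{|Z}$ on $Z$ only, and there is no elementary way to extend the relevant sections from $Z$ to $X$; producing a global decomposition $L = A' + E$ with $A'$ ample and $Z \not\subseteq \Supp E$ is precisely what the asymptotic multiplier ideal and Kawamata--Viehweg vanishing machinery is needed for. As written, the proposal spends the heavy machinery on the direction that does not require it and asserts without justification the direction that does. Two smaller points: Nakamaye's argument is written for smooth varieties, so on a normal $X$ you must pass to a resolution and compare null loci there; and the $\R$-divisor perturbation step you outline is essentially what \cite{ELMNP} already carry out, so if that formalism is to be invoked anyway, the cleanest course is to cite their statement directly, as the paper does.
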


\begin{lemma}\label{null}
Suppose that $X$ and $G$ satisfy Hyp(A). For the $A$ in \ref{bigA},
we have
$$\Null(A) = \bigcup_{\text{Y is G-periodic}} \, Y$$
where $Y$ runs over all positive-dimensional $G$-periodic proper subvarieties of $X$.
In particular, $A$ is ample if and only if every $G$-periodic proper
subvariety of $X$ is a point.
\end{lemma}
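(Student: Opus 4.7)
My plan is to prove both inclusions in the displayed set equality; the ``in particular'' claim then follows at once from the characterization that a nef and big $\R$-Cartier divisor is ample if and only if its null locus is empty (recall $A$ is nef and big by Lemma \ref{nefbig}(1), and $\Null(A)=B_{+}(A)$ by Lemma \ref{ELMNP}). For the easy inclusion $\supseteq$, if $Y$ is any $G$-periodic positive-dimensional proper subvariety of $X$, then Lemma \ref{per}(3) gives $A^{\dim Y}\cdot Y = 0$; since $A_{\,|Y}$ is nef, it fails to be big, so $Y\subseteq \Null(A)$ by the very definition of the null locus.

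For the nontrivial inclusion $\subseteq$, I would work with the finitely many irreducible components of $\Null(A)$, which is a proper Zariski-closed subset of $X$ by Lemma \ref{ELMNP}. Let $Z$ be any positive-dimensional irreducible component; by the maximality of $Z$ inside $\Null(A)$ (together with the definition of $\Null(A)$ as a union of positive-dimensional subvarieties $Z'$ with $A^{\dim Z'}\cdot Z'=0$), we have $A^{\dim Z}\cdot Z=0$. Expanding $A=L_1+\cdots+L_n$ and setting $k:=\dim Z$, each monomial $L_{i_1}\cdots L_{i_k}\cdot Z$ is non-negative (nef classes intersecting the effective cycle $Z$), and since their sum vanishes, every monomial vanishes. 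Using $g^{*}L_i=\chi_i(g)L_i$ and the projection formula, for any $g\in G$ one computes
$$L_{i_1}\cdots L_{i_k}\cdot g(Z) \, = \, g^{*}(L_{i_1}\cdots L_{i_k})\cdot Z \, = \, \chi_{i_1}(g)\cdots\chi_{i_k}(g)\cdot L_{i_1}\cdots L_{i_k}\cdot Z \, = \, 0,$$
so $A^{k}\cdot g(Z)=0$ and hence $g(Z)\subseteq\Null(A)$. Since $g(Z)$ is irreducible of positive dimension it lies in some component $Z'$; applying the same argument to $g^{-1}$ (so $g^{-1}(Z')\subseteq\Null(A)$ lies in some component $Z''$) together with the chain $Z\subseteq g^{-1}(Z')\subseteq Z''$ and the maximality of the component $Z$ forces $Z=Z''=g^{-1}(Z')$, i.e.\ $g(Z)=Z'$. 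Thus $G$ permutes the finite set of positive-dimensional irreducible components of $\Null(A)$, and the kernel of the resulting homomorphism into a symmetric group is a finite-index subgroup of $G$ that fixes every such component, proving the required $G$-periodicity.

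The main conceptual hurdle is that $A$ itself is not $G$-invariant in general (one has $g^{*}A=\sum_i\chi_i(g)L_i\ne A$), so the $G$-stability of $\Null(A)$ is not automatic from the $G$-action on $X$ alone. What is genuinely preserved under $G$ is the finer system of scalar vanishings $L_{i_1}\cdots L_{i_k}\cdot Z=0$, and the character-based calculation above is the key device for converting this into the permutation action on components that concludes the proof.
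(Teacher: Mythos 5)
Your proof is correct, and its engine is the same as the paper's: the computation $L_{i_1}\cdots L_{i_k}\cdot g(Z)=\chi_{i_1}(g)\cdots\chi_{i_k}(g)\,L_{i_1}\cdots L_{i_k}\cdot Z$ is exactly what the paper calls ``reversing the process'', and your inclusion ``$\supseteq$'' via Lemma \ref{per} is identical to the paper's. Where you genuinely diverge is in how the $G$-orbit of a null subvariety gets trapped inside a fixed proper closed subset. The paper writes $A=A_k+E/k$ with $A_k$ ample (Lemma \ref{nefbig}), observes that every null subvariety and each of its $g$-translates lies in $\Supp E$, and then takes the Zariski closure of the orbit, whose irreducible components are automatically positive-dimensional, proper and $G$-periodic. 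You instead trap the orbit inside $\Null(A)$ itself, using Lemma \ref{ELMNP} for closedness and properness, and show that $G$ permutes the finitely many irreducible components; this is arguably cleaner (it exhibits $\Null(A)$ as the union of its own $G$-periodic components rather than merely embedding each null subvariety into some $G$-periodic one) and avoids the decomposition $A=A_k+E/k$ entirely. The one step you pass over too quickly is the claim that an irreducible component $Z$ of $\Null(A)$ itself satisfies $A^{\dim Z}\cdot Z=0$: a priori $Z$ is only the closure of a union of smaller null subvarieties, and to rule out $A_{|Z}$ being big you should apply Lemma \ref{ELMNP} to $Z$ (or its normalization) and note that bigness of the restriction would confine all null subvarieties of $Z$ to the proper closed subset $\Null(A_{|Z})$, contradicting their density in $Z$. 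This is standard and easily supplied; alternatively you could sidestep it by running your orbit argument on an arbitrary null subvariety $Y$ and taking the closure of $\bigcup_{g\in G} g(Y)$ inside $\Null(A)$, exactly as the paper does inside $\Supp E$.
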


\begin{proof}
The second assertion follows from the first, Lemma \ref{per}
and the Campana-Peternell generalization of Nakai-Moishezon ampleness criterion to $\R$-divisors.

For the inclusion ``$\supseteq$", we just copy the argument in the proof of Lemma \ref{per} (1).

The inclusion ``$\subseteq$" is as in \cite[Lemma 2.6]{max}. Indeed,
assume that $Y \subseteq \Null(A)$ with $0 < s := \dim Y$, so that
$A_{|Y}$ is not big, i.e., $A^s \cdot Y = 0$.
Write $A = A_k + E/k$ as in Lemma \ref{nefbig}.
Since $A_{|Y}$ is not big, $Y \subseteq \Supp E$.
Since $A = \sum L_i$ with $L_i$ nef and $A^s$ is the sum of $L_{i_1} \cdots L_{i_s}$,
the condition $A^s \cdot Y = 0$ means $L_{i_1} \cdots L_{i_s} \cdot Y = 0$ for all
$i_j$. Since $L_{i_j}$ are
all $g^*$-eigenvectors, reversing the process,
we get $A^s \cdot g(Y) = 0$ and hence $g(Y) \subset \Supp E$
by the above reasoning.
Now $Y$ is contained in the Zariski-closure $\overline{\cup_{g \in G} \, g(Y)}$.
This closure is $G$-stabilized and contained in $\Supp E$, and every irreducible
component of it is a positive-dimensional $G$-periodic proper subvariety of $X$.
Hence $Y$ is contained in the right hand side of the equality in Lemma \ref{null}.
This proves the inclusion ``$\subseteq$".
\end{proof}

\begin{lemma}\label{ample}
Theorem \ref{ThB} holds under Condition (iii), which is equivalent to the condition that
the $A$ in \ref{bigA} is an ample divisor (cf.~Lemma \ref{null}).
Further, we can take $Y = X$.
\end{lemma}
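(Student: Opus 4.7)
The plan is to identify $X$ itself with a $Q$-torus $T/F$ and take $Y = X$, so that the sequence in Theorem \ref{ThB}(4) is the trivial one. First, I would establish the equivalence ``Condition (iii) $\iff$ $A$ is ample'' directly from Lemma \ref{null} together with the Nakai--Moishezon criterion for $\R$-divisors (Campana--Peternell): the union of positive-dimensional $G$-periodic proper subvarieties of $X$ coincides with $\Null(A)$, and this set is empty exactly when the nef and big divisor $A$ is ample.

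Assuming $A$ is ample, I would next show $K_X \sim_{\Q} 0$. Since $X$ is klt, $K_X$ is $\Q$-Cartier, and Lemma \ref{per}(2) supplies $A^{n-1} \cdot K_X = 0$ and $A^{n-2} \cdot K_X^{2} = 0$; the higher-dimensional Hodge index theorem (Lemma \ref{HR}), applied with all $H_i = A$ and $M = K_X$, then forces $K_X \equiv 0$. Abundance for klt varieties with numerically trivial canonical class (Kawamata--Nakayama) upgrades this to $m K_X \sim 0$ for some integer $m > 0$.

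The third step, and the heart of the argument, is to produce the abelian cover. Lemma \ref{per}(2) also yields $A^{n-2} \cdot c_2(X) = 0$, and combined with the ampleness of $A$ and $K_X \equiv 0$ this places us in the setting of the singular Bogomolov--Yau characterization of Greb--Kebekus--Peternell \cite{GKP}, which provides a finite Galois cover $\pi : T \to X$, \'etale in codimension one, from an abelian variety $T$. Setting $F := \Gal(T/X)$ gives $X = T/F$, with $m = |F|$ working above. Replacing $G$ by a finite-index subgroup, I would then lift the $G$-action on $X$ to a $\widetilde{G}$-action on $T$ fitting into $1 \to F \to \widetilde{G} \to G \to 1$ via \cite[Lemma 2.4]{max}, and invoke Lemma \ref{birAct}(2) to transfer Hyp(A) to $(T, \widetilde{G})$ after a further finite-index shrinking.

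Finally, the subvariety conditions of Theorem \ref{ThB}(3) fall out of finiteness of $\pi$: a positive-dimensional $\widetilde{G}$-periodic $Z \subset T$ would map to a positive-dimensional $G$-periodic proper subvariety of $X$, contradicting Condition (iii); hence every $\widetilde{G}$-periodic proper subvariety of $T$ is a point. Since $F \trianglelefteq \widetilde{G}$, the $\widetilde{G}$-orbit of any fixed locus $\{t \in T : f(t) = t\}$ ($f \in F \setminus \{\id\}$) lies in the finite collection of fixed loci of elements of $F$, so each such locus is $\widetilde{G}$-periodic and hence zero-dimensional, giving the ``free outside a finite subset'' clause. To match (4)--(7) I would take $s = 0$, $X(0) = X(1) = X$, $\tau_0 = \id_X$, $D(0) = 0$; since $X = T/F$ is automatically $\Q$-factorial klt, all items hold. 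The main obstacle is the third step, where extracting the abelian cover from $A^{n-2} \cdot c_2(X) = 0$ in the klt setting relies crucially on the singular Yau--Bogomolov theorem of \cite{GKP}.
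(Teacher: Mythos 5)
Your first two steps (the equivalence via Lemma \ref{null}, and $K_X \equiv 0$ via Lemmas \ref{per} and \ref{HR} followed by abundance) coincide with the paper's argument. The genuine gap is in your third step: you apply \cite[Theorem 1.16]{GKP} directly to $X$, but under Condition (iii) the variety $X$ is only assumed to be klt, whereas that theorem requires \emph{canonical} singularities (together with smoothness in codimension two). A klt variety with $K_X \sim_{\Q} 0$ need not be canonical --- think of a quotient singularity of type $\frac{1}{3}(1,1)$ in codimension two of a torus quotient --- so the hypothesis really can fail for the $X$ at hand, and the vanishing $A^{n-2}\cdot c_2(X)=0$ alone does not put you ``in the setting'' of the singular Bogomolov--Yau theorem. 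The paper's proof inserts exactly the step you are missing: it forms the global index-one cover $\pi\colon \hat{X} = \SSpec\,\oplus_{i=0}^{m'-1}\OO(-iK_X) \to X$, which has canonical singularities and $K_{\hat X}\sim 0$; the $G$-action lifts to $\hat X$ because each class $-iK_X$ is $G$-stable, Hyp(A) transfers by Lemma \ref{birAct}, $\hat A := \pi^*A$ is again an ample sum of common nef eigenvectors, and $\Sing\hat X$ is $G$-stable hence finite (this is where $n\ge 3$ buys $\dim\Sing\hat X \le n-3$). Only then is \cite[Theorem 1.16]{GKP} applied --- to $\hat X$, giving $\hat X = T'/F'$ --- and one descends to $X$ via the composition $T' \to \hat X \to X$, which is \'etale in codimension one, invoking Beauville's argument to write $X = T/F$.

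The remainder of your proposal (the lifting of $G$ to $\widetilde G$ on $T$ via \cite[Lemma 2.4]{max}, the observation that fixed loci of elements of $F$ are $\widetilde G$-periodic hence finite, and taking the trivial sequence $Y=X$) agrees with the paper. So the fix is localized: replace the direct appeal to \cite{GKP} by the index-one cover construction, verify its $G$-equivariance and the isolatedness of its singular locus, and apply \cite{GKP} there.
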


\begin{proof}
By Lemma \ref{per}, $K_X^i \cdot A^{n-i} = 0$ ($i = 1, 2$),
so $K_X \equiv 0$ by Lemma \ref{HR} and the ampleness of $A$.
The known abundance theorem in the case of zero Kodaira dimension implies
that $K_X \sim_{\Q} 0$. See [16, V. Corollary 4.9].

Let $m' > 0$ be the smallest integer such that
$m' K_X \sim 0$. Let
$$\pi : \hat{X} = Spec \, \oplus_{i=0}^{m'-1} \, \OO(-iK_X) \, \to \, X$$
be the Galois $\Z/(m')$-cover
which is called the {\it global index-1 cover} and is \'etale outside $\Sing X$
such that $K_{\hat X} \sim 0$ and $\hat{X}$ has at worst canonical singularities.
Since each $-iK_X$ is stabilized by $G$,
the action of $G$ on $X$ lifts to a faithful action of $G$ on $\hat{X}$.
By Lemma \ref{birAct}, $\hat{X}$ and $G$ satisfy Hyp(A), after replacing
$G$ by a finite-index subgroup.

Let ${\hat A}$ be the $\pi$-pullback of $A$. It is also a sum of common nef eigenvectors of $G$.
The ampleness of $A$
implies that of ${\hat A}$, since $\pi$ is a finite morphism.
By Lemma \ref{per} and since $n \ge 3$, we have ${\hat A}^{n-2} \cdot c_2(\hat{X}) = 0$.
This and the ampleness of ${\hat A}$
(and Miyaoka's pseudo-efffectivity of $c_2$)
imply that $c_2(\hat{X})$ is zero, as a linear form on the product of $(n-2)$-copies of $\NS_{\R}(\hat{X})$;
see \cite[pp.~265-267, Proposition 1.1]{SW}.
Since $\pi$ is $G$-equivariant,
every $G$-periodic proper subvariety of $\hat{X}$ is a point,
because the same holds on $X$ by assumption.
In particular, the singular locus $\Sing \hat{X}$ of $\hat{X}$ is isolated.

We now apply \cite[Theorem 1.16]{GKP} and deduce that
$\hat{X} = T'/F'$ for some
abelian variety $T'$ where the finite group $F'$ acts on $T'$ freely outside a codimension-2 subset.
Again $n \ge 3$ is used so that the condition ($0 =$) $\dim \Sing \hat{X} \le n-3$ in \cite{GKP} is satisfied.
Now $X$ is covered by the complex torus $T'$ via the composition 
$$T' \to T'/F = \hat{X} \to X$$ 
which is \'etale in codimension $1$.
By \cite[Sect. 3, especially Proposition 3]{Be} or \cite[\S 2.15]{max},
the assertions (1) and (3) in Theorem \ref{ThB} hold,
with $Y = X$. Indeed, since $\hat{X}$ has no positive-dimensional proper subvariety
which is periodic under the action of $G$,
so do $X$ and $T$ under the action of $G$ and $\widetilde{G}$.
Since $\widetilde{G}$ normalizes $F := \Gal(T/X)$, it stabilizes the subset of $T$ where $F$
does not act freely, so the latter subset is finite or empty.

The assertion (2) is true because $K_T \sim 0$. The others are void since $Y = X$.
\end{proof}

The following is the key step towards the proof of Theorem \ref{ThB}.
Hyp(wB) is a weaker form of Hyp(B), i.e., without the pseudo-effectivity of $K_X + D$.

\begin{proposition}\label{PropA}
Assume that $X$ and $G$ satisfy Hyp(A), and
Hyp(wB): for some effective $\R$-divisor $D$ whose irreducible components are $G$-periodic,
the pair $(X, D)$ has at worst $\Q$-factorial klt singularities.
Let $A = \sum L_i$ be the sum in \ref{bigA} of nef $\R$-divisors $L_i$
which are also $G$-eigenvectors such that $A$ is a nef and big divisor.
Replacing $G$ by a finite-index subgroup and $A$ by a large multiple,
the following are true.
\begin{itemize}
\item[(1)]
There is a sequence $\tau_s \circ \cdots \circ \tau_0$ of birational maps:
$$X = X(0) \overset{\tau_0}\dashrightarrow X(1) \overset{\tau_1}\dashrightarrow \cdots
\dashrightarrow X(s) \overset{\tau_s}\to X(s+1) = Y$$
such that
each $X(j) \dasharrow X(j+1)$
($0 \le j < s$) is either a divisorial contraction (and hence a morphism)
of a $(K_{X(j)} + D(j))$-negative extremal ray
or a $(K_{X(j)} + D(j))$-flip (and hence an isomorphism in codimension-$1$);
here $D(i) \subset X(i)$ ($0 \le i \le s+1$) is the direct image of $D$.
The $\tau_s$ is a birational morphism.
So the composition
$X \dashrightarrow Y$ is surjective in codimension-$1$.
\item[(2)]
The pair $(X(i), D(i) + A(i))$ ($0 \le i \le s+1$) and hence the pair $(X(i), D(i))$
have at worst klt singularities (cf.~\cite[Proposition 2.41, Corollary 2.39]{KM}).
$X(j)$ ($0 \le j \le s$) is $\Q$-factorial.
\item[(3)]
The induced action of $G$ on each $X(i)$ ($0 \le i \le s+1$) is biregular.
$X(i)$ and $G$ satisfy Hyp(A).
\item[(4)]
The direct image $L(i)_j$ on $X(i)$ of $L_j$ is a nef $\R$-Cartier $G$-eigenvector.
Hence the direct image $A(i)$ on $X(i)$ of $A$ is a nef and big $\R$-Cartier divisor.
Further, $L(i)_j = \tau_i^*L(i+1)_j $, so $A(i) = \tau_i^*A(i+1)$.
\item[(5)]
$K_Y + D_Y$ is an $\R$-Cartier divisor, and
$K_{X(s)} + D(s) = \tau_s^*(K_Y + D_Y)$, where $D_Y := D(s+1) = {\tau_s}_*D(s)$.
\item[(6)]
$K_Y + D_Y + A_Y$ is an ample divisor, where $A_Y := A(s+1) = \tau_*A(s)$.
Hence $K_{X(s)} + D(s) + A(s) = \tau_s^*(K_Y + D_Y + A_Y)$ is a nef and big divisor.
\item[(7)]
The union of all positive-dimensional $G$-periodic proper subvarieties of
$X(i)$ ($0 \le i \le s+1$) coincides with $\Null(A(i))$
and hence a Zariski-closed proper subset of $X(i)$. Further,
$A(i)_{\, |Z} \equiv 0$ (numerical equivalence) for every positive-dimensional
subvariety $Z \subseteq \Null(A(i))$; this is especially true when $Z$ is a component of $D(i)$.
\end{itemize}
\end{proposition}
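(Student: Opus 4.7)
The plan is to run a $G$-equivariant $(K_X+D)$-MMP with scaling by $A$ on the klt $\Q$-factorial pair $(X, D+A)$. Since $A$ is nef and big, so is $K_X+D+A$; hence by \cite{BCHM} (or \cite{Bi}) such an MMP terminates at a model $X(s)$ on which $K_{X(s)}+D(s)+A(s)$ is nef, and the basepoint-free theorem then gives the birational morphism $\tau_s\colon X(s)\to Y$ to its ample model, realizing property (6). Any extremal ray $R$ contracted along the way is automatically $(K+D)$-negative, because $A$ nef forces $(K+D)\cdot R \le (K+D+A)\cdot R < 0$. As setup, I would first replace $G$ by a deep finite-index subgroup stabilizing every component of $D$, then use Lemma \ref{nefbig}(3) to replace $A$ within its numerical class by an effective $\R$-divisor making $(X, D+A)$ klt; note that $K_X+D$ then has $G$-invariant numerical class, while $A=\sum L_j$ is only a sum of $G$-eigenvectors and not itself $G$-invariant.

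Inductively, suppose $(X(i), D(i)+A(i))$ is klt $\Q$-factorial with biregular $G$-action satisfying Hyp(A) and $A(i)=\sum L(i)_j$ a sum of nef $G$-eigenvectors. If $K_{X(i)}+D(i)+A(i)$ is already nef, I move to the ample-model step. Otherwise, the cone theorem together with MMP-with-scaling furnishes a $(K+D)$-negative extremal ray; after a further finite-index replacement of $G$ I extract a maximal $G$-stable extremal subface $F_i$ containing its $G$-orbit, whose contraction (or flip, which exists by \cite{BCHM}) is automatically $G$-equivariant and produces $X(i+1)$. Standard MMP preserves klt and $\Q$-factoriality; Lemma \ref{birAct} preserves Hyp(A); and since the contracted locus is a $G$-periodic subvariety, Lemma \ref{per}(3) forces each $L(i)_j$ numerically trivial on it, so $L(i)_j$ descends to a nef $G$-eigenvector $L(i+1)_j$ on $X(i+1)$ with $L(i)_j = \tau_i^* L(i+1)_j$, giving (4). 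Termination from \cite{BCHM} or \cite{Bi}, basepoint-freeness, and crepant pullback under $\tau_s$ yield (1), (5), (6); property (7) then follows stagewise from Lemma \ref{null} applied to $X(i)$, combined with the eigenvector structure of $A(i)$ via Lemma \ref{per}(1).

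The central obstacle is the $G$-equivariance of each MMP step: since $G$ is infinite and $A(i)$ is not $G$-invariant, the $(K+D+\lambda A)$-trivial extremal ray selected by the scaling procedure need not be $G$-stable. To resolve this I would use that $K+D$ is $G$-invariant, that the $(K+D)$-negative part of $\NE(X(i))$ is locally rational polyhedral by the cone theorem, and that only finitely many MMP steps occur by termination: hence the finitely many rays contracted across the whole process have finite $G$-orbits after a finite-index replacement of $G$, so a $G$-stable extremal face containing each such ray can be contracted equivariantly at every step. All finite-index passages to subgroups of $G$ can be bundled into the initial choice of $G$ in the setup paragraph.
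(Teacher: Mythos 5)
Your overall architecture (a $G$-equivariant MMP with scaling by $A$, termination via \cite{BCHM}, then the base point free theorem to reach the ample model $Y$) is the same as the paper's, but there are several genuine gaps. First, the opening claim ``since $A$ is nef and big, so is $K_X+D+A$'' is false: Hyp(wB) imposes no positivity on $K_X+D$, so $K_X+D+A$ need not even be pseudo-effective, and the MMP with scaling may terminate in a Mori fiber space rather than a minimal model. The paper must, and does, rule out the Fano-type case ($\rho=1$, excluded because $\Aut(X)$ would then be virtually $\Aut_0(X)$ by Lieberman, contradicting positive entropy) and the Fano-fibration case (excluded because a nontrivial $G$-equivariant fibration forces $\rank(G)\le n-2$ by \cite[Lemma 2.10]{Z-Tits}). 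These entropy/rank arguments are an essential part of the proof and are absent from your proposal.

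Second, your mechanism for equivariance and for the descent of the eigenvectors is not the right one. You invoke Lemma \ref{per}(3) to claim each $L(i)_j$ is numerically trivial on the contracted locus, but that lemma only gives $A^{\dim Z}\cdot Z=0$, i.e.\ $A|_Z$ is not big; it does not give $L(i)_j\cdot\ell=0$ for the contracted extremal curves $\ell$, which is what the contraction theorem needs to produce $L(i)_j=\tau_i^*L(i+1)_j$. The paper's device is precisely the ``replacing $A$ by a large multiple'' in the statement: since the boundary $D+A$ is big, there are only finitely many $(K+D+A)$-negative extremal rays, and after scaling $A$ every such ray satisfies $A\cdot\ell=0$ exactly, hence $L_j\cdot\ell=0$ for each $j$ by nefness; the conditions $A\cdot\ell=0$ and $(K+D)\cdot\ell<0$ are $G$-invariant, so $G$ permutes this finite set of rays and they become $G$-fixed after a finite-index replacement. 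This single argument simultaneously yields the finiteness of the $G$-orbits (which you assert via ``locally rational polyhedral'' without a real proof), the equivariance of each step, and the descent of the $L_j$. Finally, the second half of (7), namely $A(i)|_Z\equiv 0$, does not follow from Lemmas \ref{null} and \ref{per} alone (non-big nef is weaker than numerically trivial): the paper needs the vanishings $(K_Y+D_Y+A_Y)^{k-1}\cdot A_Y\cdot Z=0$ and $(K_Y+D_Y+A_Y)^{k-2}\cdot A_Y^2\cdot Z=0$ together with the Hodge index theorem for singular varieties (Lemma \ref{HR}) and the ampleness of $K_Y+D_Y+A_Y$, none of which appear in your sketch.
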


The rest of the section is devoted to the proof of Proposition \ref{PropA}.

\begin{setup}\label{MMP}
{\rm
For the convenience of the readers, we recall the traditional LMMP as in \cite[\S 3.7]{KM},
before running some {\it directed} LMMP for the klt pair $(X, D + A)$ chosen in Lemma \ref{nefbig}.
If $K_X + D + A$ is already nef, then $(X, D + A)$ is the end product and we stop the LMMP.
Suppose that $K_X + D + A$ is not nef. By the cone theorem (\cite[Theorem 3.7]{KM}),
the following closed cone of effective $1$-cylces
$$\NE(X)$$
contains a $(K_X + D + A)$-negative extremal ray
$R = \R_{> 0}[\ell]$ generated by an (extremal) rational curve $\ell$.
There is a corresponding extremal contraction
$$f := \Contr_R : X \to Y$$
onto a normal projective
variety $Y$ such that fibres of $f$ are connected, and a curve $C \subset X$
is contracted by $f$ to a point on $Y$ if and only if the class $[C] \in R$.
Further, if $B$ is a Cartier divisor on $X$ such that the intersection $B \cdot \ell = \deg(B_{| \ell}) = 0$
then $B$ equals the total transform (or pullback) $f^*B_Y$ for some Cartier divisor $B_Y$ on $Y$.

There are $4$ possible cases.

\par \vskip 1pc
Case (I) (Fano type) $\dim Y = 0$, i.e., $Y$ is a point. Then the Picard number $\rho(X) = 1$
and $-(K_X + D + A)$ is an ample divisor on $X$. Hence $X$ is of Fano type.

Case (II) (Fano fibration) $0 < \dim Y < \dim X$. Let $F$ be a general fibre of $f$.
Then $-(K_X + D + A)$
is positive on every curve $C \subset F$ and in fact, $-(K_X + D + A)_{| F} = -(K_F + (D + A)_{|F})$
is an ample divisor on $F$. So $F$ is of Fano type.
This $f$ is called a Fano-fibration.

Case (III) (divisorial) $\dim Y = \dim X$ and the {\it exceptional locus}
$\Exc(f)$ of $f$
(the subset of $X$ of point at which $f$ is not an isomorphism)
is a (necessarily) irreducible divisor. In this case
$f$ is a birational morphism. The Picard numbers satisfy $\rho(Y) = \rho(X) - 1$.
Set $X(1) := Y$ and let 
$$D(1) := f_*D, \hskip 1pc A(1) := f_*A$$ 
be the direct images of $A, D$, respectively.

Case (IV) (flip) $\dim Y = \dim X$ and the exceptional locus $\Exc(f)$ of $f$ is a Zariski-closed subset of $X$
of codimension $\ge 2$ in $X$. This $f : X \to Y$ is called a flipping contraction.
In this case, the natural map
$$X^+ := \Proj \, \oplus_{m \ge 0} \, \OO_Y({f}_*\lfloor m(K_X + D + A) \rfloor) \, \to \, Y$$
is a birational morphism such that $K_{X^+} + D^+ + A^+$ is relatively ample over $Y$.
Here $\lfloor m(K_X + D + A) \rfloor$ is the integral part (or the round down)
of the $\R$-divisor $m(K_X + D + A)$;
$D^+ \subset X^+$ and $A^+ \subset X^+$ are the proper transforms of $D$ and $A$.
Both birational maps $X \to Y$ and $X^+ \to Y$ are isomorphisms in codimension-$1$.
In particular, we have the identification of $\NS_{\R}(X) = \NS_{\R}(X^+)$.
The map $X \dasharrow X^+$ is called a $(K_X + D + A)$-flip.
Set 
$$X(1) := X^+, \,\, D(1) := D^+, \,\, A(1) := A^+ .$$

\par \vskip 1pc
Set 
$$X(0) := X, \,\, D(0) := D, \,\, A(0) := A .$$
Since $(X, D + A)$ has only
$\Q$-factorial klt singularities so is the new pair $(X(1), D(1) + A(1))$
in Case (III) or (IV) by the LMMP.

If Case (I) or (II) occurs, we got the end product and stop the LMMP.
If Case (III) or (IV) occurs, we apply the LMMP to $(X(1), D(1) + A(1))$
and get divisorial contraction
$$\tau_1 : (X(1), D(1) + A(1)) \to (X(2), D(2) + A(2))$$ 
with $\rho(X(2)) = \rho(X(1)) - 1$,
or flip 
$$\tau_1 : (X(1), D(1) + A(1)) \dashrightarrow (X(2), D(2) + A(2)) = (X(1)^+, D(1)^+ + A(1)^+)$$ 
with $\rho(X(2)) = \rho(X(1))$,
or Fano type, or Fano fibration.
Since the Picard number $\rho(X(0))$ is finite, Case (III) can only occur finitely many times.
Thus there exist a sequence of birational maps
$$X = X(0) \overset{\tau_0}\dashrightarrow X(1) \overset{\tau_1}\dashrightarrow \cdots
\dashrightarrow X(s) \overset{\tau_s}\dashrightarrow \cdots $$
and an integer $s \ge 0$ such that every
$$\tau_i : (X(i), D(i) + A(i)) \dashrightarrow (X(i+1), D(i+1) + A(i+1))$$
($i < s$) is divisorial or flip, and either there is a contraction of Fano type or Fano fibration
on $(X(s), D(s) + A(s))$, or every contraction $\tau_j$ ($j \ge s$) is a flip. Here
$D(i) \subset X(i)$ and $A(i) \subset X(i)$
are the direct images of $D = D(0)$ and $A = A(0)$.

In dimension $\le 3$, the LMMP terminates. In higher dimensions, the termination is not known, i.e.,
the termination conjecture below is not proven yet: there is no infinite sequence of flips.
However, the so called directed-flip, or $(K_X + D + A)$-MMP with scaling
terminates \cite[Corollary 1.4.2]{BCHM}.
This is one of the key ingredients of our proof.
}
\end{setup}

We continue the proof of Proposition \ref{PropA}.
We follow the procedures in \ref{MMP} and some steps of \cite{max},
but we need to take care of the $G$-equivariance of the LMMP.

{\it Replacing $G$ by a finite-index subgroup, we may and will assume that every
irreducible component of $D$ is stabilized by $G$.}

\begin{lemma}\label{nef}
In \ref{MMP}, we can choose the LMMP sequence of divisorial contractions or flips
$\tau_j : X(j) \dashrightarrow X(j+1)$
($j < s$) such that the following are true for all $i \in \{0, 1, \dots, s\}$.
\begin{itemize}
\item[(1)]
The induced action of $G$ on $X(i)$ is biregular.
The action of $G$ on $X(i)$ satisfies Hyp(A).
\item[(2)]
The direct image $L(i)_j \subset X(i)$ of each $L_j \subset X$ is nef and a common eigenvector of $G$,
so the direct image $A(i) = \sum_{j=1}^n L(i)_j$ on $X(i)$ of $A$ is nef and big.
\item[(3)]
$K_{X(s)} + D(s) + bA(s)$ is a nef and big divisor for some (and hence all) $b >> 1$.
\end{itemize}
\end{lemma}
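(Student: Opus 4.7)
The plan is to run an LMMP with scaling of the nef and big $\R$-divisor $A$ on the $\Q$-factorial klt pair $(X, D+A)$ coming from Lemma \ref{nefbig}, which terminates by \cite[Corollary 1.4.2]{BCHM}. To ensure $G$-equivariance, I would first replace $G$ by a finite-index subgroup so that every irreducible component of $D$ is $G$-stable, so that $K_X + D$ is $G$-invariant as an $\R$-Cartier class. Then the proof proceeds by induction on the step number $i$, the inductive hypotheses being (1)--(4) of the lemma up to level $i$.

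At step $i$, the MMP-with-scaling framework produces a $(K_{X(i)} + D(i))$-negative extremal ray $R_i = \R_{>0}[\ell_i]$ characterized by vanishing of $K_{X(i)} + D(i) + t_i A(i)$ on $R_i$ for the current scaling parameter $t_i$; the set of such candidate rays is finite, so by passing to a further finite-index subgroup (which happens only finitely many times since the whole MMP terminates), we may assume $G$ stabilizes $R_i$. The key claim to establish is that for any such $G$-stabilized extremal ray $R_i$,
\[
L(i)_j \cdot R_i \;=\; 0 \qquad (1 \le j \le n),
\]
so in particular $A(i) \cdot R_i = 0$. Writing $g_*[\ell_i] = \mu(g)[\ell_i]$, the identity $g^*L(i)_j \cdot [\ell_i] = L(i)_j \cdot g_*[\ell_i]$ gives $(\chi_j(g) - \mu(g)) \, L(i)_j \cdot [\ell_i] = 0$, while $G$-invariance of $K + D$ yields $(1 - \mu(g)) \, (K_{X(i)} + D(i)) \cdot [\ell_i] = 0$. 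Since $A(i)$ is nef and $R_i$ is $(K_{X(i)}+D(i))$-negative, the second equality forces $\mu \equiv 1$. If $L(i)_j \cdot R_i \neq 0$ for some $j$, then $\chi_j = \mu \equiv 1$, contradicting the fact from Lemma \ref{DS} that $\varphi$ sends $G/N(G)$ isomorphically onto a full-rank lattice in $\R^{n-1}$ (no coordinate $\log \chi_j$ can vanish identically).

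With $A(i) \cdot R_i = 0$ in hand, the divisorial contraction or $(K_{X(i)}+D(i))$-flip $\tau_i$ is $G$-equivariant by the universal property of its target (defined from the $G$-stabilized class of $R_i$), and each $L(i)_j$ descends to a nef $\R$-Cartier divisor $L(i{+}1)_j$ on $X(i{+}1)$ with $L(i)_j = \tau_i^* L(i{+}1)_j$ (in the divisorial case by the standard descent of divisors trivial on the ray, in the flip case via the common birational model). The new $L(i{+}1)_j$ are $G$-eigenvectors with the same characters $\chi_j$, so their sum $A(i{+}1)$ is nef with $A(i)^n = A(i{+}1)^n > 0$, hence nef and big. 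Lemma \ref{birAct} then shows that $X(i{+}1)$ and $G$ still satisfy Hyp(A), and $\Q$-factorial klt-ness is preserved by standard MMP properties (\cite[Proposition 2.41, Corollary 2.39]{KM}).

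Finally, termination at some $X(s)$ with $K_{X(s)} + D(s) + t_s A(s)$ nef (for the last scaling value $t_s \ge 0$) gives (3): for any $b \ge \max(1, t_s)$,
\[
K_{X(s)} + D(s) + b A(s) \;=\; \bigl(K_{X(s)} + D(s) + t_s A(s)\bigr) + (b - t_s) A(s)
\]
is nef (sum of nefs) and big since $A(s)$ remains big. The main obstacle is the subtle character-theoretic argument showing $L(i)_j \cdot R_i = 0$: this is what allows us to contract only $G$-periodic extremal rays and thereby circumvent the potential infinite-ray pathology caused by the infinite group $G$, as flagged in the introduction.
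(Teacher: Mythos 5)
Your character-theoretic observation --- that a $G$-stabilized $(K_{X(i)}+D(i))$-negative extremal ray automatically satisfies $L(i)_j\cdot R_i=0$ for every $j$ --- is correct, but it runs in the wrong direction: the hard part is \emph{producing} a $G$-stabilized negative extremal ray at each step, and your mechanism for doing so does not work. First, the scaling divisor is wrong. If you scale by $A$ itself, then a ray selected at a step with positive scaling parameter $t_i$ satisfies $t_i\,A(i)\cdot R_i=-(K_{X(i)}+D(i))\cdot R_i>0$, hence $A(i)\cdot R_i>0$; by your own key claim such a ray can never be $G$-stabilized, and the $L(i)_j$ do not descend along its contraction, so the two halves of your argument contradict each other. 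Worse, whenever there is an extremal ray with $A\cdot R=0$ and $(K_X+D)\cdot R<0$ --- exactly the rays one must contract --- the divisor $K_X+D+A+tA$ is negative on $R$ for every $t$, the nef threshold is $+\infty$, and the scaling is not even defined: $A$ is only nef and big, not ample. The paper instead runs the LMMP on the pair $(X,D+A)$ with scaling of an auxiliary \emph{ample} divisor $M$ chosen so that $(X,D+A+M)$ is klt and $K_X+D+A+M$ is nef.

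Second, your assertion that ``the set of candidate rays is finite, so by passing to a finite-index subgroup we may assume $G$ stabilizes $R_i$'' needs the $G$-orbit of $R_i$ to be finite, i.e., that $G$ permutes the finite set of $(K+D+A)$-negative extremal rays. But $g^*A=\sum_j\chi_j(g)L_j\neq A$, so $g_*$ has no reason to preserve that set, and $\NE(X)$ may carry infinitely many extremal rays in total. The step you omit, and which the paper makes first, is to replace $A$ by a large multiple $bA$ so that \emph{every} $(K_X+D+bA)$-negative extremal ray $\ell$ satisfies $A\cdot\ell=0$, hence $L_j\cdot\ell=0$ for all $j$ since the $L_j$ are nef; only then do the identities $L_j\cdot g_*\ell=\chi_j(g)\,L_j\cdot\ell=0$ and $(K_X+D)\cdot g_*\ell=(K_X+D)\cdot\ell<0$ show that $G$ permutes this finite set, so a finite-index subgroup fixes each ray. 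This replacement is also exactly why conclusion (3) of the lemma is stated for $bA(s)$ with $b\gg1$. Finally, you never rule out the LMMP terminating in a Fano-type contraction ($\rho=1$ forces $G$ to have null entropy, a contradiction) or a Fano fibration (excluded by the rank bound $\operatorname{rank}(G)\le n-2$ for nontrivial $G$-equivariant fibrations, \cite[Lemma 2.10]{Z-Tits}); without this, termination need not land on a model where $K_{X(s)}+D(s)+bA(s)$ is nef.
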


\begin{proof}
Since $A$ is big, the bigness of the direct image $A(i)$ of $A$ is clear.
Also the nefness of $A(i)$ would follow from that of $L(i)_j$ for all $j$.
Write $A = A_k + E/k$ as in Lemma \ref{nefbig} such that $(X, D + A)$ is klt.
Fix an ample Cartier divisor $M$ such that the pair
$(X, D + A + M)$ is nef and klt. This is doable because klt is an open condition
and we can replace $M$ by $\frac{1}{c}M'$ with $M'$ a general member of $|cM|$ for some large integer $c$.

The bigness in (3) is clear for $b >> 1$.
So assume the contrary that $K_X + D + bA$ is not nef for any $b > 1$.
We now consider $K_X + D + A$, but $A$ may be replaced by $bA$ for some $b >> 1$.
Since our boundary divisor $A$ is larger than some ample divisor $A_k$,
there are only finitely many $(K_X + D + A)$-negative extremal rays $\R_{> 0} [\ell]$ in $\NE(X)$.
See the cone theorem \cite[Theorem 3.7]{KM} or \cite[Corollary 3.8.2]{BCHM}.
We may assume that all these $\ell$ satisfy $A \cdot \ell = 0$
and $(K_X + D) \cdot \ell < 0$,
otherwise, we would have $(K_X + D + A) \cdot \ell > 0$ for all these finitely many $\R_{> 0}[\ell]$
and hence $K_X + D + A$ is nef, after replacing $A$ by a large multiple.
Since $A = \sum_{i=1}^n L_i$ and each $L_i$ is nef,
$A \cdot \ell = 0$ means $L_i \cdot \ell = 0$ for all $i$.
Since $L_i \cdot g_*\ell = \chi_i(g) L_i \cdot \ell = 0$ and hence $A \cdot g_*\ell = 0$, and
$$(K_X + D) \cdot g_*\ell = g^*(K_X + D) \cdot \ell = (K_X + D) \cdot \ell < 0$$
this $g_*\ell$ ($= g(\ell)$ as a set, also an extremal curve) satisfies the same conditions as $\ell$. So
these finitely many extremal rays $\R_{> 0} [\ell]$ are permuted by $G$
and may assumed to be fixed by $G$, after replacing $G$ by a finite-index subgroup.

Now we run the $M$-directed LMMP for the pair $(X, D + A)$ as in \cite[Definition 2.4]{Bi}.
Remember that we may assume that $K_X + D + A$ is not nef while $K_X + D + A + M$ is nef
by the choice of $M$.
There is an extremal ray $R = \R_{> 0} [\ell]$ such that
for
$$\lambda_0 := \inf \{\alpha \ge 0 \, | \, K_X + D + A + \alpha M \,\, \text{\rm is nef} \}$$
we have that
$K_X + D + A + \lambda_0 M$ is nef, $(K_X + D + A) \cdot \ell < 0$
and $(K_X + D + A + \lambda_0 M) \cdot \ell = 0$.
As mentioned above, such $\ell$ satisfies $A \cdot \ell = 0$ and hence $L_i \cdot \ell = 0$,
$(K_X + D) \cdot \ell < 0$,
and we may assume that the extremal ray $R$ is $G$-stable.
Let 
$$f = \Contr_R: X \to Y$$ 
be the extremal contraction
as in \ref{MMP}. Since $R$ is $G$-stable and $f$ is completely determined by $R$,
the action of $G$ on $X$ descends to a biregular action of $G$ on $Y$ such that the morphism
$f : X \to Y$ is $G$-equivariant. We consider the $4$ cases in \ref{MMP} separately.

\par \vskip 1pc
Case (I) (Fano type). Since the Picard number $\rho(X) = 1$,
we take $H$ to be an ample generator of $\NS(X)/(\torsion) \cong \Z$.
Then we have $\Aut(X) \le \Aut_{[H]}(X)$.
Hence, using \cite{Li} as in the proof of Lemma \ref{birAct},
$\Aut(X)$ is a finite extension of the identity component $\Aut_0(X)$,
so $\Aut(X)$ and hence $G$ are of null entropy. This contradicts the assumption that
every element of $G \setminus \{\id\} = \Z^{\oplus n-1} \setminus \{\id\}$
is of positive entropy and that $n - 1 \ge 2$.

Case (II) (Fano fibration). Since $f : X \to Y$ is a non-trivial $G$-equivariant
fibration, $\rank(G) \le \dim X - 2 = n-2$, by \cite[Lemma 2.10]{Z-Tits}.
This contradicts the fact $\rank(G) = n-1$.

Case (III) (divisorial).
By Lemma \ref{birAct}, $Y$ and $G$ satisfy Hyp(A).
Since $L_j \cdot \ell = 0$, a property of the contraction $f$
implies that $L_j = f^*L(1)_j$ for some $\R$-Cartier divisor $L(1)_j$ on $Y$.
See \cite[Lemma 3-2-5]{KMM}.
We have $L(1)_j = f_*L_j$ by the projection formula.
Since $L_j$ is nef, so is $L(1)_j$. Clearly, if $g^*L_j = \chi(g)L_j$
then $g^*L(1)_j = \chi(g) L(1)_j$.
Set 
$$(X(1), D(1) + A(1)) := (Y, f_*(D + A)) .$$
We can continue the $G$-equivariant LMMP.

Case (IV) (flip). Since the flip $X^+$ is uniquely determined by the extremal ray $R$
(which is stabilized by $G$),
there is a biregular action of $G$ on $X^+$.
By Lemma \ref{birAct}, $X^+$ and $G$ satisfy Hyp(A).
As in Case (III), we have $L_j = f^*f_*L_j$
and $f_*L_j$ is an $\R$-Cartier nef divisor on $Y$.
Let $L(1)_j$ be the pullback by the birational morphism $X^+ \to Y$.
Then it is a $\R$-Cartier nef divisor.
Clearly, if $g^*L_j = \chi(g)L_j$
then $g^*L(1)_j = \chi(g) L(1)_j$.
Set 
$$(X(1), D(1) + A(1)) := (X^+, D^+ + A^+) .$$
We can continue the $G$-equivariant LMMP.

\par \vskip 1pc
Since we have settled the cases of Fano type and Fano fibration and since a divisorial contraction
decreases the Picard number, setting 
$$(X, D + A) = (X(0), D(0) + A(0))$$
we may assume that we have a sequence
$$(X(0), D(0) + A(0)) \overset{\tau_0}\dashrightarrow (X(1), D(1) + A(1))
\overset{\tau_1}\dashrightarrow \cdots
\dashrightarrow (X(s), D(s) + A(s)) \overset{\tau_s}\dashrightarrow \cdots $$
of flips $\tau_i$ corresponding to the extremal rays
$\R_{> 0}[\ell_i]$ with $(K_{X(i)} + D(i) + A(i)) \cdot \ell_i < 0$
and $(K_{X(i)} + D(i) + A(i) + \lambda_i M(i)) \cdot \ell_i = 0$.
Here $D(i)$, $A(i)$ and $M(i)$ on $X(i)$
are the direct images of $D$, $A$ and $M$ (which is the fixed ample divisor on $X$).
The real number $\lambda_i$ satisfies:
$$\lambda_i = \inf \{\alpha \ge 0 \, | \, K_{X(i)} + D(i) + A(i) + \alpha M(i) \,\, \text{\rm is nef} \}.$$

If $K_{X(s)} + D(s) + A(s)$ is nef for some $s$, then we are done.
Suppose this is not the case for any $s \ge 0$. Then we have an infinite sequence of flips
$$\tau_i : (X(i), D(i) + A(i)) \dashrightarrow (X(i+1), D(i+1) + A(i+1))$$ 
as above.

Since our pair $(X, D + A)$ is $\Q$-factorial klt and
the boundary divisor $D + A$ is a big divisor and larger than an ample divisor $A_k$
as in Lemma \ref{nefbig}, we can apply \cite[Corollary 1.4.2]{BCHM} or \cite[Theorem 1.9 (i)]{Bi}.
Thus the above LMMP must terminate. Hence $K_{X(s)} + D(s) + A(s)$ must be nef for some $t \ge 0$.
This proves Lemma \ref{nef}.
\end{proof}

We continue the proof of Proposition \ref{PropA}. Replacing $X$ by the $X(s)$ in Lemma \ref{nef},
we may assume that we already have $K_X + D + A$ nef and big for the $A$ in Lemma \ref{nefbig}.
By the base point free theorem (\cite[Theorem 3.3]{KM}, or \cite[Theorem 3.9.1]{BCHM}),
there exist a birational morphism $\gamma: X \to Y$ onto a normal projective variety $Y$
and an ample $\R$-Cartier divisor $B$ on $Y$ such that
$K_X + D + A = \gamma^*B$. By the projection formula, $B = K_Y + D_Y + A_Y$
where $D_Y = \gamma_*D$ and $A_Y = \gamma_*A$
are the direct images. So
$$K_X + D + A = \gamma^*(K_Y + D_Y + A_Y) .$$

\begin{lemma}\label{crep}
Replacing $G$ by a finite-index subgroup and $A$ by a large multiple, we have:
\begin{itemize}
\item[(1)]
The action $G$ on $X$ descends to a biregular action on $Y$ so that $\gamma : X \to Y$
is $G$-equivariant.
\item[(2)]
$K_Y + D_Y$ is $\R$-Cartier and $K_X + D = \gamma^*(K_Y + D_Y)$.
The pair $(Y, D_Y + A_Y)$ and hence $(Y, D_Y)$ have only klt singularities
(cf.~\cite[Proposition 2.41, Corollary 2.39]{KM}).
\item[(3)]
$L_j = \gamma^*L(Y)_j$ ($1 \le j \le n$) for some $\R$-Cartier nef divisor $L(Y)_j$ on $Y$
which is also a common eigenvector of $G$.
Hence $A_Y = \gamma_*A = \sum_{j=1}^n L(Y)_j$ is a nef and big $\R$-Cartier divisor, and $A = \gamma^*A_Y$.
\item[(4)]
$Y$ and $G$ satisfy Hyp(A). Our
$Y$, $G$ and $A_Y = \sum L(Y)_j$ satisfy all the properties in
Lemmas \ref{DS}, \ref{nefbig} and \ref{per} for $X$, $G$ and $A = \sum L_j$.
\end{itemize}
\end{lemma}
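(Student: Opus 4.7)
The plan is to verify $(1)$--$(4)$ in the order $(1), (3), (2), (4)$, exploiting the freedom (built into the statement) to replace $A$ by a large multiple and to pass to a finite-index subgroup of $G$. The main obstacle is the $G$-equivariance of $\gamma$ in $(1)$: the $\R$-divisor $L := K_X + D + A$ defining $\gamma$ via $|mL|$ for $m \gg 0$ is \emph{not} $G$-invariant, being the sum of a $G$-fixed class $K_X + D$ and $A = \sum L_i$ with each $L_i$ only a $G$-eigenvector.

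For $(1)$, I would show that the $L$-null cone in $\NE(X)$ is $G$-invariant after the replacements, forcing each $g \in G$ to permute the fibres of $\gamma$ and hence to descend to a biregular automorphism of $Y$. Any $L$-null extremal ray $\R_{>0}[\ell]$ has $A \cdot \ell \ge 0$ and $(K_X + D)\cdot \ell = -A \cdot \ell \le 0$, and the analysis splits in two: case $(a)$, $A \cdot \ell = 0$, whence $L_j \cdot \ell = 0$ for every $j$ by nefness of the $L_j$, cutting out a $G$-invariant subcone since the $L_j$ are common $G$-eigenvectors; and case $(b)$, $A \cdot \ell > 0$, whence $(K_X + D) \cdot \ell < 0$. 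Writing $A = A_k + E/k$ as in Lemma~\ref{nefbig}, every case-$(b)$ ray lies in the \emph{finite} set of $(K_X + D + A_k)$-negative extremal rays (because $A \ge A_k$ with $A_k$ ample and the cone theorem). Replacing $A$ by $cA$ for $c$ larger than $\max_{\ell}(-(K_X+D)\cdot \ell / A \cdot \ell)$ over that finite set forces case $(b)$ to be empty: for any such $\ell$ one then has $(K_X+D+cA)\cdot \ell > 0$, so $\ell$ is no longer $L$-null. Enlarging $k$ correspondingly keeps $(X, D + cA)$ klt via the representation $cA = cA_k + cE/k$. After this scaling, the $L$-null cone is purely of type $(a)$, hence $G$-invariant, which yields the biregular $G$-action on $Y$ and proves $(1)$.

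For $(3)$, the case-$(a)$ description of the contracted curves says each $L_j$ is trivial on every $\gamma$-contracted curve, so by the standard descent \cite[Lemma~3-2-5]{KMM} we get $L_j = \gamma^* L(Y)_j$ for a unique $\R$-Cartier divisor $L(Y)_j$ on $Y$; nefness follows from the projection formula applied to lifts of curves from $Y$, and the $G$-eigenvalue of $L_j$ transports to $L(Y)_j$ by $G$-equivariance of $\gamma$. Summing over $j$ gives $A = \gamma^* A_Y$ with $A_Y = \sum L(Y)_j$ nef and big. Part $(2)$ then follows by subtracting $A = \gamma^* A_Y$ from the defining identity $K_X + D + A = \gamma^*(K_Y + D_Y + A_Y)$, and klt of $(Y, D_Y + A_Y)$ is inherited by standard descent \cite[Proposition~2.41, Corollary~2.39]{KM}. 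Finally $(4)$ is immediate from Lemma~\ref{birAct}: $\gamma$ is a $G$-equivariant birational morphism, so Hyp(A) transfers to $Y$ after a further finite-index replacement of $G$; and the $L(Y)_j$ realise the structure of Lemmas~\ref{DS}, \ref{nefbig}, \ref{per} on $Y$, since $L(Y)_1 \cdots L(Y)_n = L_1 \cdots L_n > 0$ by the projection formula.
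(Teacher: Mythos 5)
Your proof is correct in outline and follows essentially the same route as the paper: rescale $A$ so that every $\gamma$-contracted extremal ray is trivial against $A$ (hence against each nef summand $L_j$) and against $K_X+D$, deduce $G$-invariance of the contracted face from the eigenvector structure, and then descend the $L_j$, $K_X+D$ and the klt condition via the contraction theorem. The only real variation is that you keep the whole null face $G$-invariant, whereas the paper passes to a finite-index subgroup fixing each of the finitely many contracted rays individually; both suffice for descending the action.

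One sub-step is mis-justified, however: a case-(b) ray need not be $(K_X+D+A_k)$-negative. Since $(K_X+D+A)\cdot\ell=0$ and $A=A_k+E/k$, one has $(K_X+D+A_k)\cdot\ell=-(E/k)\cdot\ell$, and $E\cdot\ell$ can be negative when $\ell\subset\Supp E$, so your appeal to the finiteness of $(K_X+D+A_k)$-negative rays does not go through as stated. The paper obtains finiteness by keeping $E/k$ in the boundary and shrinking only the ample part: every $\gamma$-contracted ray satisfies $(K_X+D+E/k+\varepsilon A_k)\cdot\ell=-(1-\varepsilon)A_k\cdot\ell<0$, and the boundary $D+E/k+\varepsilon A_k$ is big, so the cone theorem gives finitely many such rays. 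This repair is purely local; in fact your rescaling step does not even need finiteness, since for any $c>1$ the divisor $K_X+D+cA=(K_X+D+A)+(c-1)A$ is a sum of nef divisors, so its null rays automatically satisfy $A\cdot\ell=0$ (your threshold computation also overlooks that $(K_X+D)\cdot\ell=-A\cdot\ell$ on a null ray, so the threshold is just $c>1$). With that correction the argument matches the paper's.
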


\begin{proof}
Since $A = A_k + E/k$ with $A_k$ ample, the extremal rays $\R_{> 0} [\ell]$ contracted by $\gamma$
(i.e., perpendicular to the nef and big divisor $K_X + D + A = \gamma^*(K_Y + D_Y + A_Y)$)
satisfy
$$0 = \gamma^*(K_Y + D_Y + A_Y) \cdot \ell =
(K_X + D_Y + E/k + \varepsilon A_k) \cdot \ell + (1-\varepsilon) A_k \cdot \ell
> (K_X + D_Y + E/k + \varepsilon A_k) \cdot \ell $$
and hence are all $(K_X + D_Y + E/k + \varepsilon A_k)$-negative.
Thus there are only finitely many of such $\R_{> 0} [\ell]$ by the bigness of the (new)
boundary divisor $D_Y + E/k + \varepsilon A_k$.
See the cone theorem \cite[Theorem 3.7]{KM}.
Note that $\gamma : X \to Y$ is also a birational contraction of a
$(K_X + D_Y + E/k + \varepsilon A_k)$-negative
extremal face; see the contraction theorem \cite[Theorem 3-2-1]{KMM}.
If $\gamma : X \to Y$ is an isomorphism, then the lemma is clear.
Otherwise, there are such $\ell$;
replacing $A$ by a large multiple, we may assume that the above finitely many
extremal rays $\R_{> 0}[\ell]$ (contracted by $\gamma$)
satisfy
$$0 = (K_X + D + A) \cdot \ell = (K_X + D) \cdot \ell = A \cdot \ell .$$
Since $A = \sum_{j=1}^n L_j$ is a sum of nef divisors, we have $L_j \cdot \ell = 0$.

Note that if $\ell$ satisfies the condition
$(K_X + D) \cdot \ell = L_j \cdot \ell = 0$, then so are the $G$-images of $\ell$.
Since there are only finitely many of such extremal rays $\R_{> 0}[\ell]$,
we may assume that all of them are $G$-stable, after replacing $G$ by a finite-index subgroup.
This and the fact that the map $\gamma$ is the contraction of the extremal face generated by these extremal rays,
imply that $G$ descends to an action on $Y$ such that $\gamma$ is $G$-equivariant.
By Lemma \ref{birAct}, $Y$ and $G$ satisfy Hyp(A).

Now the cone theorem (\cite[Theorem 3.7]{KM} or \cite[Theorem 3-2-1, or Lemma 3-2-5]{KMM})
and $L_j \cdot \ell = 0$ for all the above extremal $\ell$
imply that $L_j = \gamma^*L(Y)_j$ for some $\R$-Cartier divisor $L(Y)_j$ on $Y$
which is a nef common eigenvector of $G$, since so is $L_j$.
Further, $L(Y)_j = \gamma_*L_j$ by the projection formula.
By the same cone theorem, $(K_X + D) \cdot \ell = 0$ implies that
$K_Y  + D_Y = \gamma_*(K_X + D)$ is a $\R$-Cartier divisor and $K_X + D = \gamma^*(K_Y + D_Y)$.

Since $(X, D + A)$ is klt, so is $(Y, D_Y + A_Y)$ (and hence $(Y, D_Y)$)
by the display preceding Lemma \ref{crep}.
This proves Lemma \ref{crep}.
\end{proof}

\par \vskip 1pc
We continue the proof of Proposition \ref{PropA}. Setting $\tau_s := \gamma$, by the arguments so far,
it remains to prove Proposition \ref{PropA} (7).
Its first part is a consequence of Lemmas \ref{ELMNP} and \ref{null}, thanks to (3).
For the second part of (7), we have only to prove it on $Y$,
since the sequence $X \dashrightarrow Y$ is $G$-equivalent.
Indeed, if $\tau_i : X(i) \to X(i+1)$ is divisorial, then $A(i) = \tau_i^*A(i+1)$, hence
$\Null(A(i))$ is just
the inverse of $\Null(A((i+1))$, using Lemma \ref{null}.
If $X(i) \to Y'$ and $X(i)^+ = X(i+1) \to Y'$ are the flipping contractions,
then $A(i)$ and $A(i+1)$ are the pullbacks of some nef and big divisor $A_{Y'}$ on $Y'$, so
$\Null(A(i))$ is
the inverse of $\Null(A_{Y'})$ by using Lemma \ref{null} and
noting that $Y'$ and $G$ also satisfy Hyp(A) by Lemma \ref{birAct}.

For the second part of (7), to prove the vanishing of ${A_Y}_{\, |Z}$ on $Y$,
by Lemma \ref{per}, we may assume that $k : = \dim Z \ge 2$.
As in \cite[Lemma 2.9]{max},
we prove first:
\begin{equation}\label{(*1)}
((K_Y + D_Y + A_Y)_{\, |Z})^{k-1} \cdot {A_Y}_{\, |Z} = (K_Y + D_Y + A_Y)^{k-1} \cdot A_Y \cdot Z = 0
\end{equation}
Indeed, since $A_Y = \sum L(Y)_i$, the above mid-term is
the summation of the following terms
$$
(K_Y + D_Y)^{k-1-t} \cdot L(Y)_{j_1}  \cdots L(Y)_{j_t} \cdot L(Y)_i \cdot Z
$$
where $0 \le t \le k-1 \le n-2$.
Now the vanishing of each term above can be verified as
in Lemma \ref{per}, since $g^*(K_Y + D_Y) = K_Y + D_Y$ for $g \in G$.
The equality (\ref{(*1)}) above is proved.

The equality (\ref{(*1)}) and ampleness of $K_Y + D_Y + A_Y$, restricted to
$Z$, imply that
$$((K_Y + D_Y + A_Y)_{|\, Z})^{k-2} \cdot ({A_Y}_{\, |Z})^2$$
is a non-positive scalar by Lemma \ref{HR},
and hence is zero since $K_Y + D_Y + A_Y$ and $A_Y$ are nef.
Thus ${A_Y}_{\, |Z} \equiv 0$ by Lemma \ref{HR}.
This completes the proof of Proposition \ref{PropA}.

\section{Proofs of Theorems \ref{ThB} and \ref{CorA} and Proposition \ref{PropE}}

{\it We first prove Theorem \ref{ThB}.}
Theorem \ref{ThB} is true under Condition (iii) by Lemma \ref{ample}.
Assume Condition (i). If $X$ is not uniruled, then (so is its resolution and hence)
$K_X$ is a pseudo-effective divisor by the uniruledness criterion due to Miyaoka-Mori and
Boucksom-Demailly-Paun-Peternell. Thus Hyp(B) holds by letting $D = 0$, so Condition (ii) holds.
If $X$ is uniruled,
the action of $G$ on $X$ descends to a biregular action on the base
of the {\it special} MRC (maximal rationally connected) fibration constructed in
\cite[Theorem 4.18]{IntS}, with general fibres rationally connected varieties.
The maximality of $\rank(G) = n-1$
and \cite[Lemma 2.10]{Z-Tits}
imply that this $G$-equivariant MRC must be trivial.
Since $X$ is uniruled and hence the general fibre is not a point, the triviality means
that the base is a point.
So $X$ is rationally connected, contradicting Condition (i).

\par \vskip 1pc
{\it From now on, we will prove Theorem \ref{ThB} under Condition (ii).}

Since $(X, D)$ is klt, if we set 
$D^{\varepsilon} := (1 + \varepsilon) D$
then $(X, D^{\varepsilon})$ is still klt for small $\varepsilon \in (0, 1)$;
see \cite[Corollary 2.35]{KM}.
Choose $A$ as in Lemma \ref{nefbig} such that $(X, D^{\varepsilon} + A)$ is klt.
Applying Proposition \ref{PropA} to the pair $(X, D^{\varepsilon} + A)$,
we get birational maps
$$\tau_i : X(i) \dashrightarrow X(i+1)$$ 
where
$\tau_j$ ($0 \le j < s$) is either a divisorial contraction or a flip, corresponding
to a $(K_{X(j)} + D^{\varepsilon}(j))$-negative extremal ray,
and 
$$\tau_s : X(s) \to X(s+1) = Y$$ 
is a birational morphism.
Here we let 
$$D(i), \,\, D^{\varepsilon}(i), \,\, A(i)$$ 
on $X(i)$ be the direct
images of $D$, $D^{\varepsilon}$, $A$, respectively;
note that $D^{\varepsilon}(i) = (1 + \varepsilon) D(i)$.
The assertions (4) - (6) follow from Proposition \ref{PropA}.

Next we show (7).
Since $K_X + D$ and hence $K_X + D^{\varepsilon}$ are pseudo-effective, so are their direct images
$K_{X(s)} + D(s)$ and $K_{X(s)} + D^{\varepsilon}(s)$.
To simplify the notation, we use 
$$\gamma : X \to Y$$ 
to denote the $\tau_s : X(s) \to X(s+1) = Y$
in Proposition \ref{PropA},
and let 
$$D = D(s), \,\, D^{\varepsilon} = D^{\varepsilon}(s) = (1 + \varepsilon)D, \,\, A = A(s) .$$
By the $\sigma$-{\it decomposition} for pseudo-effective divisors in \cite[III. \S 1.12]{ZDA},
$$K_{X} + D^{\varepsilon} = P + N$$
where $P$ is in the {\it closed movable cone} (generated by fixed-component free
Cartier divisors) and $N$ is an effective divisor.

{\it Replacing $G$ by a finite-index subgroup, we may and will assume that every $G$-periodic
divisor on $X$ is stabilized by $G$}. See Proposition \ref{PropA} (7).

\begin{claim}\label{N1}
We have $P \equiv 0$, so $K_{X} + D^{\varepsilon} \equiv N$.
Every irreducible component of $N$ is stabilized by $G$.
\end{claim}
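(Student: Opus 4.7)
The plan is to first extract the $G$-invariance of both $P$ and $N$ from the uniqueness of the $\sigma$-decomposition, then to pair against the distinguished classes $A^{n-1}$ and $A^{n-2}$, and finally to invoke the Hodge index theorem to force $P\equiv 0$.

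First I would note that, after the standing finite-index replacement that makes every prime component of $D$ $G$-stabilized, we have $g^*(K_X+D^{\varepsilon})=K_X+D^{\varepsilon}$ for every $g\in G$. The $\sigma$-decomposition of a pseudo-effective $\R$-divisor is unique, and $g^*$ preserves both the closed movable cone and the cone of effective divisors; so applying $g^*$ to $K_X+D^{\varepsilon}=P+N$ forces $g^*P=P$ and $g^*N=N$ as $\R$-divisors. In particular $G$ permutes the finitely many prime components of $N$, so after one further finite-index replacement every such component is individually $G$-stabilized; this already yields the second assertion of the claim.

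Next I would intersect with $A^{n-1}$. By Lemma~\ref{per}(2), $A^{n-1}\cdot K_X=0$; and since each prime component of $D$ is $G$-stabilized, Lemma~\ref{per}(1) with $n-k=1$ gives $A^{n-1}\cdot D=0$. Hence $A^{n-1}\cdot(P+N)=0$. We have $A^{n-1}\cdot N\ge 0$ because $A$ is nef and $N$ is effective, and $A^{n-1}\cdot P\ge 0$ because $P$ is pseudo-effective and $A^{n-1}$ is the limit as $t\to 0^+$ of the ample complete-intersection classes $(A+tH)^{n-1}$, where $H$ is any ample divisor. Both summands therefore vanish: $A^{n-1}\cdot P=A^{n-1}\cdot N=0$. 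As a byproduct, writing $N=\sum c_jN_j$ with $c_j>0$ and $N_j$ prime, each $A^{n-1}\cdot N_j=0$ forces $N_j\subset\Null(A)$, which by Proposition~\ref{PropA}(7) is the union of positive-dimensional $G$-periodic proper subvarieties, reconfirming the $G$-periodicity of the $N_j$.

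Finally, to prove $P\equiv 0$, I would use that $g^*P=P$ implies $P^2\in H^{2,2}(X)$ is $G$-invariant, so Lemma~\ref{per}(1) applied with $n-k=2$ (this is where $n\ge 3$ is used) yields $A^{n-2}\cdot P^2=0$. Combined with $A^{n-1}\cdot P=0$, the Hodge index theorem (Lemma~\ref{HR}) would give $P\equiv 0$. The hard part is that Lemma~\ref{HR} requires the $H_i$ to be ample, whereas $A$ is only nef and big; to overcome this I would approximate $A$ by the ample divisors $A_k=A-E/k$ from Lemma~\ref{nefbig} and pass to the limit $k\to\infty$, exploiting the movability of $P$ so that the mixed error terms involving the effective divisor $E$ contribute with controlled sign in the relevant Hodge-type inequalities and do not spoil the sign conclusions.
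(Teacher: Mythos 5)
Your derivation of the $G$-invariance of $P$ and $N$ from the uniqueness of the $\sigma$-decomposition, and the vanishing $A^{n-1}\cdot P=A^{n-1}\cdot N=0$, agree with the paper and are correct. The gap is in the final step. Lemma \ref{HR} is a Hodge index statement for \emph{ample} classes $H_1,\dots,H_{n-1}$; for the merely nef and big divisor $A$ the corresponding form $I(L_1,L_2)=A^{n-2}\cdot L_1\cdot L_2$ on $\{L:\ A^{n-1}\cdot L=0\}$ is only negative \emph{semi}-definite, and its kernel is typically nonzero: for instance, if $\gamma:X\to Y$ contracts a prime divisor $E$ to a subvariety of codimension at least $3$ and $A=\gamma^*(\text{ample})$, then $A^{n-1}\cdot E=A^{n-2}\cdot E^2=0$ while $E\not\equiv 0$. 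Since in the present situation the components of $N$ (and conceivably pieces of $P$) lie exactly in $\Null(A)$, the two vanishings $A^{n-1}\cdot P=0$ and $A^{n-2}\cdot P^2=0$ cannot by themselves force $P\equiv 0$. The proposed repair by perturbation does not close this: replacing $A$ by the ample $A_k=A-E/k$ destroys the hypotheses of Lemma \ref{HR} (neither $A_k^{n-1}\cdot P$ nor $A_k^{n-2}\cdot P^2$ need vanish), and letting $k\to\infty$ (or $t\to 0^+$ in $A+tH$) only recovers the semi-definite inequality $A^{n-2}\cdot P^2\le 0$, which you already know as an equality.

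What is actually needed, and what the paper uses, is the much stronger positivity coming from movability: for \emph{every} prime divisor $M$ and all nef $\R$-Cartier divisors $M_1,\dots,M_{n-2}$ one has $P\cdot M\cdot M_1\cdots M_{n-2}\ge 0$, because $P_{\,|M}$ is pseudo-effective on $M$. This family of inequalities, together with the single vanishing $P\cdot A^{n-1}=0$ for the nef and big $A$, is fed into \cite[Lemma 2.2]{nz2} (proved by reduction to the hard Lefschetz theorem), which yields $P\equiv 0$. So the missing ingredient is not a better approximation of $A$ by ample classes but the quantifier over all prime divisors $M$; your proposal invokes movability only as a device to control error terms in the perturbation, which is not where its force is needed.
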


\begin{proof}
We prove Claim \ref{N1}.
The uniqueness of the $\sigma$-decomposition implies
the assertion that $g^*P \equiv P$ and $g^*N = N$
for any $g \in G$
(so every component of $N$ is $G$-periodic and hence stabilized by $G$).
Indeed, 
$$K_{X} + D^{\varepsilon} = g^*(K_{X} + D^{\varepsilon}) = g^*P + g^*N$$
and $(K_{X} + D^{\varepsilon}) - g^*N = g^*P$ is movable, so
$g^*N \ge N$ by the minimality of the `negative part' $N$;
see \cite[III, Proposition 1.14]{ZDA}. Applying the above to $g^{-1}$
we get $N \ge g^*N$. The assertion follows.

Now
$$P \cdot M \cdot M_1 \cdots M_{n-2} = P_{\, | M} \cdot {M_1}_{\, | M} \cdots {M_{n-1}}_{\, |M}
\ge 0$$
for every irreducible divisor $M$
and nef $\R$-Cartier divisors $M_i$ because $P_{\, |M}$ is a pseudo-effective divisor on $M$;
also $P \cdot A^{n-1} = 0$ for the nef and big divisor $A$,
by Lemma \ref{per}. Thus we may apply \cite[Lemma 2.2]{nz2}
(by reducing to the hard Lefschetz theorem) to deduce that $P \equiv 0$.
This proves Claim \ref{N1}.
\end{proof}

\begin{claim}\label{Ns}
$N = 0$. Hence $K_{X} + D^{\varepsilon} \sim_{\Q} 0$.
\end{claim}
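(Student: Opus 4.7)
The plan is to leverage the pseudo-effectivity of $K_X+D$ itself (given by Hyp(B) and preserved through the LMMP of Proposition \ref{PropA}) and carry out a parallel $\sigma$-decomposition analysis, in order to pin down $N$ in terms of $D$.

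First I would apply the $\sigma$-decomposition of \cite{ZDA} to $K_X+D$: write $K_X+D=P_1+N_1$ with $P_1$ in the closed movable cone and $N_1$ effective. By the uniqueness-under-$G$ argument used in Claim \ref{N1}, the irreducible components of $N_1$ are $G$-stabilized (after replacing $G$ by a finite-index subgroup). Running the intersection-number computation verbatim gives $A^{n-1}\cdot(K_X+D)=0$: indeed $A^{n-1}\cdot K_X=0$ by Lemma \ref{per}(2), and $A^{n-1}\cdot D=0$ by Lemma \ref{per}(1) applied to the $G$-stabilized components of $D$, via the identity $\prod_i\chi_i=1$. Likewise $A^{n-1}\cdot N_1=0$, so $A^{n-1}\cdot P_1=0$, and \cite[Lemma 2.2]{nz2} yields $P_1\equiv 0$. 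Thus $K_X+D\equiv N_1$.

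Second, since $K_X+D^{\varepsilon}=(K_X+D)+\varepsilon D\equiv N_1+\varepsilon D$, the sub-additivity of $\sigma_\Gamma$ (\cite[III, Proposition 1.8]{ZDA}) applied prime-by-prime yields
\[
\operatorname{mult}_\Gamma(N)=\sigma_\Gamma(K_X+D^{\varepsilon})\le \sigma_\Gamma(K_X+D)+\varepsilon\operatorname{mult}_\Gamma(D)=\operatorname{mult}_\Gamma(N_1+\varepsilon D)
\]
for every prime divisor $\Gamma$, so $N\le N_1+\varepsilon D$ as divisors. Since both sides represent the numerical class of $K_X+D^{\varepsilon}$, the difference $R:=N_1+\varepsilon D-N\ge 0$ is an effective $\R$-divisor with $R\equiv 0$; intersecting $R$ with the $(n-1)$-st power of any ample divisor forces $R=0$, whence $N=N_1+\varepsilon D$.

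The main obstacle is then to show $N_1=0$ and $D=0$, so that $N=0$. Each component of $N_1$ or of $D$ is $G$-stabilized and thus contained in $\Null(A)$ by Proposition \ref{PropA}(7), so $A$ restricts to a numerically trivial class on every such component. Combining this with the fact that $K_X+D+A=\gamma^*(K_Y+D_Y+A_Y)$ is the pullback of an ample divisor on $Y$, I would restrict the relation $K_X+D\equiv N_1$ to each such component and apply Lemma \ref{HR} to the ample perturbations $A+\delta H$, exploiting the many vanishing intersection numbers provided by Lemma \ref{per} and the identity $\prod\chi_i=1$, to force the multiplicities of $N_1$ and of $D$ along those components to vanish. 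Once $N=0$, we have $K_X+D^{\varepsilon}=P\equiv 0$; since $(X,D^{\varepsilon})$ is klt, log abundance for klt pairs with numerically trivial log canonical divisor (\cite[V, Corollary 4.9]{ZDA}, as already invoked in Lemma \ref{ample}) then gives $K_X+D^{\varepsilon}\sim_{\Q}0$.
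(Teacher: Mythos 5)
Your first two steps are sound: decomposing $K_X+D=P_1+N_1$, showing $P_1\equiv 0$ exactly as in Claim \ref{N1}, and deducing $N=N_1+\varepsilon D$ from sub-additivity of $\sigma_\Gamma$ together with the numerical triviality of the effective difference. But this only reformulates the problem: since all three divisors are effective, $N=0$ is \emph{equivalent} to the conjunction of $N_1=0$ and $D=0$, and your third step, which is supposed to establish those two vanishings, is an unexecuted sketch whose proposed mechanism does not work. Lemma \ref{HR} requires ample divisors in every slot; once you perturb $A$ to $A+\delta H$ the vanishings supplied by Lemma \ref{per} are destroyed (there is no reason for, say, $H^{n-2}\cdot D^2$ to vanish), and letting $\delta\to 0$ only returns non-strict inequalities for the non-ample class $A$. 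More fundamentally, numerical triviality of $A$ on $\Null(A)$ can never by itself force an effective divisor supported in $\Null(A)$ to vanish, so some positivity input beyond Lemma \ref{per} is indispensable. You also misquote Proposition \ref{PropA}: in the proof of Theorem \ref{ThB} the LMMP is run for the pair $(X,D^{\varepsilon}+A)$, so the divisor that is nef and big (and pulled back from an ample divisor on $Y$) is $K_X+D^{\varepsilon}+A$, not $K_X+D+A$; hence $(K_X+D)_{|Z}\equiv (N_1)_{|Z}$ differs from the restriction of a nef class by $-\varepsilon D_{|Z}$, which you cannot control.

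The paper's proof supplies exactly the missing positivity and works directly with $N$: each component $N_i$ of $N$ is $G$-stabilized, hence lies in $\Null(A)$, hence $A_{|N_i}\equiv 0$ by Proposition \ref{PropA}(7); consequently $N_{|N_i}\equiv(K_X+D^{\varepsilon}+A)_{|N_i}$ is the restriction of a nef divisor, in particular pseudo-effective. This gives $N\cdot M\cdot M_1\cdots M_{n-2}\ge 0$ for \emph{every} prime divisor $M$ --- the dangerous case $M=N_i$ included --- and all nef $M_i$, while $N\cdot A^{n-1}=0$ by Lemma \ref{per}; then \cite[Lemma 2.2]{nz2} (the same lemma used for $P$ in Claim \ref{N1}, not the Hodge index Lemma \ref{HR}) yields $N\equiv 0$, hence $N=0$. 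The vanishing of $D$ itself is deduced only \emph{after} Claim \ref{Ns}, from $K_X+D\equiv-\varepsilon D$ combined with the pseudo-effectivity of $K_X+D$ --- the ingredient you announce in your opening sentence but never actually deploy.
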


\begin{proof}
We prove Claim \ref{Ns}.
By Proposition \ref{PropA}, $K_{X} + D^{\varepsilon} + A$ is a nef and big divisor; further,
$$(K_{X}+ D^{\varepsilon} + A)_{\, | Z} = (K_{X} + D^{\varepsilon})_{\, |Z} \equiv N_{\, |Z}$$
is also nef for every $G$-periodic proper subvariety $Z$ of $X$,
especially for $Z = N_i$, a component of $N$.
Thus
$$N \cdot M \cdot M_1 \cdots M_{n-2} =  N_{\, | M} \cdot {M_1}_{\, | M} \cdots {M_{n-1}}_{\, |M}
\ge 0$$
for every irreducible divisor $M$
and nef $\R$-Cartier divisors $M_i$ because $N_{\, |M}$ is a pseudo-effective divisor on $M$
(even when $M = N_i$);
also $N \cdot A^{n-1} = 0$ for the nef and big divisor $A$,
by Lemma \ref{per}. As in Claim \ref{N1}, applying \cite[Lemma 2.2]{nz2}, we get $N \equiv 0$,
so the effective divisor $N = 0$.
Thus $K_{X} + D^{\varepsilon} \equiv N = 0$.
Since $(X, D^{\varepsilon})$ is klt by Proposition \ref{PropA},
the known abundance theorem in the case of zero log Kodaira dimension implies
that $K_{X} + D^{\varepsilon} \sim_{\Q} 0$. See \cite[V. Corollary 4.9]{ZDA}.
This proves Claim \ref{Ns}.
\end{proof}

\par \vskip 1pc
We return to the proof of Theorem \ref{ThB}.
Now $0 \equiv K_{X} + D^{\varepsilon} = (K_{X} + D) + \varepsilon D$.
This and the pseudo-effectivity of $K_{X} + D$ as assumed,
imply that $K_{X} + D \equiv 0$ and $\varepsilon D = 0$.
Hence the effective divisor $D = 0$, so $D^{\varepsilon} = (1+\varepsilon)D = 0$.
Thus $K_{X} \sim_{\Q} 0$ by Claim \ref{Ns}.

Note that in this proof of Theorem \ref{ThB}, we have been applying Proposition \ref{PropA}
to the pair $(X, D^{\varepsilon})$ and use $\gamma : X \to Y$ to denote
$\tau_s : X(s) \to X(s+1) = Y$ in Proposition \ref{PropA}.
The latter says that
$K_Y + \gamma_*D^{\varepsilon}$ is an $\R$-Cartier divisor (with
$\gamma_*D^{\varepsilon} = 0$ now) and it has
$K_X + D^{\varepsilon}$ ($ = K_{X}$ now) as its pullback by $\gamma$.
Thus $0 \sim_{\Q} K_{X} = \gamma^*K_Y$. Hence $K_Y \sim_{\Q} 0$.
This proves the assertion (7).

By Proposition \ref{PropA}, we have the ampleness of $K_Y + \gamma_*D^{\varepsilon} + A_Y$ ($\equiv A_Y$ now).
Thus we can apply Lemma \ref{ample} to $Y$ and $G$ (cf.~Proposition \ref{PropA} (2) and (3)).
In particular, assertions (1) - (3) are true.
This proves Theorem \ref{ThB} under Condition (ii).

We have completed the proof of Theorem \ref{ThB}.

\par \vskip 1pc
{\it Next we prove Main Theorem \ref{CorA}.}
Condition (1) implies Condition (2) by Theorem \ref{ThB} with $W := Y$; for the remark about
``We can take $\widetilde{G} = G$ ...'', see \cite[Lemma 2.4, \S 2.15]{max}.
Assume Condition (2). By Proposition \ref{PropC}, our $W$ and $G$ satisfy
Hyp(B''') in which the $G$-equivariant birational model of $W$ (also denoted as $X$ there)
and $G$ clearly satisfy Hyp(B) with $D = \delta \Delta$.
This proves Main Theorem \ref{CorA}. (See also Lemma \ref{birAct} for the birational nature of Hyp(A)).

\par \vskip 1pc
{\it Finally we prove Proposition \ref{PropE}.}
Replacing $G$ by a finite-index subgroup, we may assume that
every $G$-periodic divisor is stabilized by $G$.
As in the proof of Theorem \ref{ThB}, let
$K_X + D = P + N$
be the $\sigma$-decomposition,
where $P$ is movable and $N = \sum n_i N_i$ with $N_i$ irreducible and $n_i > 0$.
As in Claim \ref{N1}, the uniqueness of such decomposition implies that
both $P$ and $N$ are stabilized by $G$,
$P \equiv 0$, and each $N_i$ is stabilized by $G$.

Since $(X, D)$ is dlt, if we set $D_{\varepsilon} := (1 - \varepsilon) D$
then $(X, D_{\varepsilon})$ is klt for every $\varepsilon \in (0, 1]$;
see \cite[Proposition 2.41, Corollary 2.39]{KM}.
Take $\varepsilon$ small such that $\varepsilon < \min\{n_i\}$ if $N \ne 0$.
Choose $A$ as in Lemma \ref{nefbig} such that $(X, D_{\varepsilon} + A)$ is klt.
{\it We apply Proposition \ref{PropA} to the pair $(X, D_{\varepsilon} + A)$}.
So we get birational maps
$$\tau_i : X(i) \dashrightarrow X(i+1)$$ 
($0 \le i \le s$) where
$\tau_j$ ($j < s$) is either a divisorial contraction or a flip, corresponding
to a $(K_{X(j)} + D_{\varepsilon}(j))$-negative extremal ray,
and $\tau_s : X(s) \to X(s+1) = Y$ is a birational morphism.
Here we let $D(i)$, $D_{\varepsilon}(i)$ and $A(i)$ on $X(i)$ be the direct
images of $D$, $D_{\varepsilon}$ and $A$;
note that $D_{\varepsilon}(i) = (1 - \varepsilon) D(i)$.
Proposition \ref{PropE} (1) follows from Proposition \ref{PropA}.

$K_X + D \equiv P + N \equiv N$ implies that
$K_{X(s)} + D(s) \equiv N(s)$, where $N(s)$ is the direct image of $N$.
Set $\tau = \tau_{s-1} \circ \cdots \circ \tau_0 : X = X_0 \dashrightarrow X(s)$.

\begin{claim}\label{N2}
The pair $(X(s), D(s))$ has only $\Q$-factorial log canonical singularities, and
$K_{X(s)} + \Delta(s) \sim_{\Q} 0$.
\end{claim}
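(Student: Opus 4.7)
The plan is to mimic the treatment of Claims \ref{N1} and \ref{Ns} in the proof of Theorem \ref{ThB}, adapted to the dlt perturbation $D_{\varepsilon}=(1-\varepsilon)D$ rather than the klt perturbation $D^{\varepsilon}=(1+\varepsilon)D$. The argument splits into three parts: (a) establishing $\Q$-factorial log canonical singularities of $(X(s), D(s))$, (b) showing numerical triviality $K_{X(s)}+D(s)\equiv 0$ by proving $N(s)=0$, and (c) upgrading to $\Q$-linear equivalence via abundance.

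For (a), Proposition \ref{PropA}(5) yields $\Q$-factoriality of $X(s)$, and Proposition \ref{PropA}(2) gives that $(X(s), D_{\varepsilon}(s)+A(s))$, and hence $(X(s), D_{\varepsilon}(s))$, is klt for every small $\varepsilon>0$. On a common log resolution the log discrepancies satisfy $a(E, X(s), D(s)) = a(E, X(s), D_{\varepsilon}(s)) - \varepsilon\cdot\mathrm{mult}_{E}(\text{pull-back of }D(s))$, so the klt bound $>-1$ valid for every $\varepsilon>0$ passes, in the limit $\varepsilon\to 0$, to the lc bound $\ge -1$; thus $(X(s), D(s))$ is lc.

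For (b), the relation $K_{X}+D\equiv N$ (coming from $P\equiv 0$) pushes forward to $K_{X(s)}+D(s)\equiv N(s)$, and it suffices to verify the hypotheses of \cite[Lemma 2.2]{nz2} for $N(s)$: namely $N(s)\cdot A(s)^{n-1}=0$, and $N(s)\cdot M\cdot M_{1}\cdots M_{n-2}\ge 0$ for every irreducible divisor $M$ and nef $\R$-Cartier divisors $M_{i}$. The first is immediate from Lemma \ref{per}, since $K_{X(s)}$ and every component $D_{j}(s)$ are $G$-stable. For the second, when $M\not\subset\Supp N(s)$ the restriction $N(s)|_{M}$ is effective, while when $M=N(s)_{i}$ is a (necessarily $G$-periodic) component of $N(s)$, Proposition \ref{PropA}(7) gives $A(s)|_{M}\equiv 0$, so the nef restriction $(K_{X(s)}+D_{\varepsilon}(s)+A(s))|_{M}$ equals $(K_{X(s)}+D_{\varepsilon}(s))|_{M}$, and
$$N(s)|_{M}\equiv (K_{X(s)}+D(s))|_{M}=(K_{X(s)}+D_{\varepsilon}(s))|_{M}+\varepsilon D(s)|_{M}$$
is the sum of a nef class and $\varepsilon D(s)|_{M}$. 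The main obstacle will be the sub-case where $M$ is simultaneously a component of $D(s)$, in which $\varepsilon D(s)|_{M}$ carries a self-intersection contribution $\varepsilon d_{M}\cdot M|_{M}$; to control it I plan to exploit $A(s)|_{D_{j}(s)}\equiv 0$ (Proposition \ref{PropA}(7)) together with the higher-dimensional Hodge index Lemma \ref{HR}, reducing the required pseudo-effectivity to an intersection computation on $M$ that mirrors the proof of Claim \ref{Ns}.

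With \cite[Lemma 2.2]{nz2} applied in (b), the effective divisor $N(s)$ satisfies $N(s)\equiv 0$, hence $N(s)=0$ and $K_{X(s)}+D(s)\equiv 0$. Step (c) then invokes the abundance theorem for log canonical pairs with numerically trivial log canonical divisor (as in \cite[V, Corollary 4.9]{ZDA} and its lc extensions), which upgrades the numerical triviality to $K_{X(s)}+D(s)\sim_{\Q}0$, completing the proof of Claim \ref{N2}.
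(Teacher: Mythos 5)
Your overall strategy (push $K_X+D\equiv N$ forward, kill $N(s)$ with \cite[Lemma 2.2]{nz2}, then invoke abundance) is the right one, but the proposal stops short precisely where the real work lies. You correctly isolate the problematic sub-case, namely a prime divisor $M$ that is simultaneously a component of $N(s)$ and of $D(s)$, where $\varepsilon D(s)|_{M}$ carries a self-intersection term and pseudo-effectivity of $N(s)|_{M}$ is not at all clear; but you only announce a ``plan'' to handle it via $A(s)|_{M}\equiv 0$ and Lemma \ref{HR}, without an actual argument, and I do not see how Hodge index alone produces the needed sign. The paper closes this gap with a different device: write $N(s)-\varepsilon D(s)=D'-D''$ with $D'$, $D''$ effective and \emph{with no common components}, and apply the pseudo-effectivity test to $D'$ rather than to $N(s)$. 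For a component $D_i'$ of $D'$, Proposition \ref{PropA}(6)--(7) give that $(D'-D''+A(s))|_{D_i'}=(D'-D'')|_{D_i'}$ is nef, and $D''|_{D_i'}$ is genuinely effective because $D''$ shares no component with $D'$; hence $D'|_{D_i'}$ is pseudo-effective with no self-intersection issue, \cite[Lemma 2.2]{nz2} yields $D'=0$, and then $N(s)\le\varepsilon D(s)$ combined with the earlier choice $\varepsilon<\min\{n_i\}$ forces $N(s)=0$. Note that your proposal never uses the hypothesis $\varepsilon<\min\{n_i\}$, which is the tell-tale sign that the intended route does not reach the conclusion.

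There is a second, smaller gap in part (a). Deducing log canonicity of $(X(s),D(s))$ as a limit of klt-ness of $(X(s),(1-\varepsilon)D(s))$ as $\varepsilon\to 0$ requires klt-ness on the \emph{same} model $X(s)$ for a sequence of $\varepsilon$ tending to $0$; Proposition \ref{PropA} only provides it for the single $\varepsilon$ used to run the MMP, and for smaller $\varepsilon'$ the boundary is larger and the MMP steps need not be $(K+D_{\varepsilon'})$-negative, so discrepancies can drop. From $a(E)+\varepsilon c_E>-1$ for one fixed $\varepsilon$ you cannot conclude $a(E)\ge -1$. The paper instead establishes $K_{X(s)}+D(s)\equiv 0$ first and then reads off log canonicity from the relation $K_X+D=\tau^*(K_{X(s)}+D(s))+N$ (negativity lemma), comparing with the log canonical pair $(X,D)$; you should reorder your argument accordingly.
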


\begin{proof}
We prove Claim \ref{N2}.
Let $N(s) - \varepsilon D(s) = D' - D''$, with $D'$ and $D''$ effective and having no common components.
By Proposition \ref{PropA},
we have the (big and) nefness of
$$K_{X(s)} + D_{\varepsilon}(s) + A(s) \equiv N(s) - \varepsilon D(s) + A(s) = D' - D'' + A(s) ,$$
and hence the nefness of 
$$(D' - D'' + A(s))_{\, | D_i'} = (D' - D'')_{\, | D_i'}$$
for every component $D_i'$ of $D'$. Thus $D'_{\, | D_i'}$ is pseudo-effective, so
$D' = 0$, as in Claim \ref{Ns}. Now $N(s) - \varepsilon D(s) = - D'' \le 0$.
This and the choice of $\varepsilon$ then imply that the direct image $N(s)$ of $N$ is $0$.
Thus $K_{X(s)} + D(s) \equiv 0$. Hence $K_X + D = \tau^*(K_{X(s)} + D(s)) + N$.
Since the pair $(X, D)$ has only $\Q$-factorial (dlt and hence) log canonical singularities, so is $(X(s), D(s))$.
This and the known abundance theorem in the case of numerical Kodaira dimension zero,
imply that (the numerically trivial divisor) $K_{X(s)} + D(s) \sim_{\Q} 0$.
This proves Claim \ref{N2}.
\end{proof}

\par \vskip 1pc
We return to the proof of Proposition \ref{PropE}.
By Proposition \ref{PropA}, $\tau_s : X(s) \to Y$ is the
contraction of an extremal face. Since $K_{X(s)} + D(s)$ ($\sim_{\Q} 0$)
is perpendicular to every extremal ray of the face,
${\tau_s}_*(K_{X(s)} + D(s)) = K_Y + D_Y$ is an $\R$-Cartier divisor
and $K_{X(s)} + D(s) = \tau_s^*(K_Y + D_Y)$.
This and Claim \ref{N2} imply that
$K_Y + D_Y \sim_{\Q} 0$ and the pair $(Y, D_Y)$ has only log canonical singularities.
This proves Proposition \ref{PropE} (2).

By Proposition \ref{PropA}, we have the ampleness of
$K_Y + {\tau_s}_*D_{\varepsilon}(s) + A_Y = K_Y + (1 - \varepsilon) D_Y + A_Y
\sim_{\Q} -\varepsilon D_Y + A_Y$
and hence the ampleness of
$$(-\varepsilon D_Y + A_Y)_{\, | Z} = -\varepsilon {D_Y}_{\, | Z}$$
for every $G$-periodic positive-dimensional
proper subvariety $Z$ of $Y$. So $Z$ is contained in the support of $D_Y$.
This proves Proposition \ref{PropE} (3), hence the whole of Proposition \ref{PropE}.


\begin{thebibliography}{99}

\bibitem{Be}
A.~Beauville,
Some remarks on K\"ahler manifolds with $c\sb{1}=0$,
\emph{Classification of Algebraic and Analytic Manifolds}
(Katata, 1982, ed. K.~Ueno),
Progr.\ Math., \textbf{39} Birkh\"auser 1983, pp.~1--26.

\bibitem{BK}
E. \ Bedford and K. \ Kim,
Periodicities in linear fractional recurrences: Degree growth of birational surface maps,
Michigan Math. \ J. \ \textbf{54} (2006), 647-–670.

\bibitem{Bi}
C.~Birkar,
Existence of log canonical flips and a special LMMP,
Publ. \ Math. \ Inst. \ Hautes \'Etudes Sci. \textbf{115} (2012), 325 -– 368.

\bibitem{BCHM}
C.~Birkar, P.~Cascini, C.~D.~Hacon and J.~$\text{M}^{\text c}$Kernan,
Existence of minimal models for varieties of log general type,
J. \ Amer. \ Math. Soc. \ \textbf{23} (2010) 405-468.

\bibitem{CZ}
S.~Cantat and A.~Zeghib,
Holomorphic actions, Kummer examples, and Zimmer Program,
Ann. \ Sci. \ \'Ec. \ Norm. \ Sup\'er. (4) \textbf{45} (2012), no. 3, 447-–489.

\bibitem{Di}
J.~Diller,
Cremona transformations, surface automorphisms, and plane cubics,
With an appendix by Igor Dolgachev,
Michigan Math. \, J. \textbf{60} (2011), no. 2, 409–-440.

\bibitem{Di05} T.-C.~Dinh,
Suites d'applications m\'eromorphes multivalu\'ees et courants laminaires,
J. \ Geom. \ Anal. \textbf{15} (2005), no. 2, 207–-227.

\bibitem{Di12} T.-C.~Dinh,
Tits alternative for automorphism groups of compact K\"ahler manifolds, 
Acta Math. \ Vietnam. \ \textbf{37} (2012), no. 4, 513–-529.

\bibitem{DS04} T.-C.~Dinh and N.~Sibony,
Groupes commutatifs d'automorphismes d'une vari\'et\'e k\"ahlerienne compacte,
Duke Math.\ J. \textbf{123} (2004) 311--328.

\bibitem{ELMNP}
L.~Ein, R.~Lazarsfeld, M.~Mustata, M.~Nakamaye and M.~Popa,
Asymptotic invariants of base loci, Ann. Inst. Fourier (Grenoble),
\textbf{56} (2006), no. 6, 1701–-1734.

\bibitem{GKP}
D.~Greb, S.~Kebekus and T.~Peternell,
\'Etale fundamental groups of Kawamata log terminal spaces,
flat sheaves, and quotients of Abelian varieties,
arXiv:\textbf{1307.5718}

\bibitem{KH}
A.~Katok and F.~R.~Hertz,
Arithmeticity and topology of smooth actions of higher rank abelian groups,
arXiv:\textbf{1305.7262}

\bibitem{KMM}
Y.~Kawamata, K.~Matsuda and K.~Matsuki, Introduction to the minimal
model problem, in: Algebraic geometry, Sendai, 1985, 283--360, Adv.
Stud. Pure Math., Vol. \textbf{10}, 1987.

\bibitem{KeMc}
S.~Keel and J.~$\text{M}^{\text c}$Kernan, Rational curves on quasi-projective surfaces,
Mem. \ Amer. \ Math. \ Soc. \ \textbf{140} (1999), no. \textbf{669}.

\bibitem
{KM} J.~Koll\'ar and S.~Mori,
Birational geometry of algebraic varieties,
Cambridge Tracts in Math. \textbf{134},
Cambridge Univ.\ Press, 1998.

\bibitem{Li}
D.~I.~Lieberman,
Compactness of the Chow scheme: applications to automorphisms
and deformations of K\"ahler manifolds,
\emph{Fonctions de plusieurs variables complexes, III}
(\emph{S\'em.\ Fran\c{c}ois Norguet, 1975--1977}), pp.~140--186,
Lecture Notes in Math., \textbf{670}, Springer, 1978.

\bibitem{Mc}
C.~T.~McMullen,
Dynamics on blowups of the projective plane,
Publ. \ Math. \ Inst. \ Hautes \'Etudes Sci.  \ No. \textbf{105} (2007), 49 -- 89.

\bibitem{ZDA}
N.~Nakayama,
Zariski-decomposition and abundance,
MSJ Memoirs, \textbf{14},
Mathematical Society of Japan, Tokyo, 2004.

\bibitem{IntS}
N.~Nakayama,
Intersection sheaves over normal schemes, J. \ Math. \ Soc. \ Japan
\textbf{62} (2010), no. 2, 487 -– 595.

\bibitem{nz2}
N.~Nakayama and D. -Q.~Zhang,
Polarized Endomorphisms of Complex Normal Varieties,
Math. \ Ann. \ \textbf{346} (2010) 991 -- 1018.

\bibitem{Og}
K.~Oguiso,
Some aspects of explicit birational geometry inspired by complex dynamics,
ICM2014 Proceedings (to appear),
arXiv:\textbf{1404.2982}

\bibitem{SW}
N.~I.~Shepherd-Barron and P.~M.~H.~Wilson,
Singular threefolds with numerically trivial first
and second Chern classes,
J.\ Alg.\ Geom.\ \textbf{3} (1994) 265--281.

\bibitem{Z-Tits} D. -Q.~Zhang,
A theorem of Tits type for compact K\"ahler manifolds,
Invent. \ Math. \ \textbf{176} (2009) 449 -- 459.

\bibitem{JDG}
D.~-Q.~Zhang, Dynamics of automorphisms on projective complex manifolds,
J.\ Differential Geom.\ \textbf{82} (2009), no.~3, 691--722.

\bibitem{max}
D.~-Q.~Zhang,
Algebraic varieties with automorphism groups of maximal rank,
Math. \ Ann. \ \textbf{355} (2013) 131--146.

\bibitem{NullG}
D.~-Q.~Zhang,
Compact K\"ahler manifolds with automorphism groups of maximal rank,
Trans. \ Amer. \ Math. \ Soc. \textbf{366} (2014), no. 7, 3675 -- 3692.

\end{thebibliography}
\end{document}